\documentclass[a4paper,11pt]{article} 
\usepackage{
amsmath,
amsthm,
amsfonts,
amssymb,
enumerate,
xfrac,
tikz,
comment,
pgfplots,
xcolor,
mathrsfs
}
\pgfplotsset{compat=newest,compat/show suggested version=false}
\usepackage[colorlinks,citecolor=blue,urlcolor=blue]{hyperref}
\usetikzlibrary{shapes.geometric,positioning,matrix,intersections,calc,patterns,arrows.meta}
\excludecomment{codes}
\usepackage[text={6.5in,9.0in},centering]{geometry}

\usepackage{graphicx,url,subfig,diagbox}
\usepackage{authblk,makecell}
\RequirePackage[OT1]{fontenc}
\usepackage{datetime}
\usepackage[normalem]{ulem} 
\usepackage{soul} 
\numberwithin{equation}{section}
\numberwithin{figure}{section}

\usepackage{mathtools}
\usepackage{amssymb}
\usepackage{pifont}
\usepackage{comment}
\usepackage{lmodern}

\usepackage{amsrefs}

\renewcommand{\MR}[1]{~\href{https://mathscinet.ams.org/mathscinet-getitem?mr=MR#1}{MR#1}.}

\BibSpec{article}{%
    +{}  {\PrintAuthors}                {author}
    +{,} { \textit}                     {title}
    +{.} { }                            {part}
    +{:} { \textit}                     {subtitle}
    +{,} { \PrintContributions}         {contribution}
    +{.} { \PrintPartials}              {partial}
    +{,} { }                            {journal}
    +{}  { \textbf}                     {volume}
    +{}  { \PrintDatePV}                {date}
    +{,} { \issuetext}                  {number}
    +{,} { \eprintpages}                {pages}
    +{,} { }                            {status}
    +{,} { available at \eprint}        {eprint}
    +{}  { \parenthesize}               {language}
    +{}  { \PrintTranslation}           {translation}
    +{;} { \PrintReprint}               {reprint}
    +{.} { }                            {note}
    +{.} {}                             {transition}
    +{}  {\SentenceSpace \PrintReviews} {review}
}

\theoremstyle{plain}
\newtheorem{theorem}{Theorem}[section]
\newtheorem{corollary}[theorem]{Corollary}
\newtheorem{lemma}[theorem]{Lemma}
\newtheorem{proposition}[theorem]{Proposition}

\theoremstyle{definition}

\newtheorem{remark}[theorem]{Remark}

\newcommand{\E}{\mathbb{E}}

\newcommand{\ud}{\ensuremath{\mathrm{d} }}

\newcommand{\Norm}[1]{\left\| #1 \right\|}

\newcommand{\ip}[2]{\left\langle #1,#2\right\rangle}

\DeclareMathOperator{\Cov}{Cov}

\newcommand{\calG}{\mathcal{G}}

\newcommand{\R}{\mathbb{R}}
\renewcommand{\P}{\mathbb{P}}

\newcommand{\Erfc}{\ensuremath{\mathrm{erfc} }}
\newcommand{\ML}[2]{E_{#1,\,#2}}  

\newcounter{Pq}
\newcounter{Le}
\usepackage[framemethod=tikz]{mdframed}
\mdfsetup{%
middlelinecolor=lightgray,
middlelinewidth=2pt,
backgroundcolor=red!20,
roundcorner=10pt}
{\refstepcounter{Pq}
\begin{mdframed}[backgroundcolor=lightgray]\textbf{Qanqiu's comment \thePq \: #1}\\[+1.5em]}%
{\end{mdframed}}
{\refstepcounter{Le}
\begin{mdframed}[backgroundcolor=red!10]\textbf{Le's comment \theLe \: #1}\\[0.5em]}%
{\end{mdframed}}
\numberwithin{Pq}{section}
\numberwithin{Le}{section}

\title{
Spatial covariance of KPZ from flat initial profile
}

\author[1]{Le Chen\thanks{Email: \href{mailto:lzc0090@auburn.edu}{lzc0090@auburn.edu}}}
\author[1]{Juan J. Jim\'enez\thanks{Email: \href{mailto:juj0003@auburn.edu}{juj0003@auburn.edu}}}
\affil[1]{Department of Mathematics and Statistics\protect\\
Auburn University, Auburn, AL 36849, USA}

\date{}

\begin{document}
\maketitle \setlength{\parindent}{1.5em}

\begin{abstract}
	We study the fixed-time spatial covariance of the KPZ equation with flat
	initial profile. Using Malliavin calculus and a Clark--Ocone representation,
	we show that as $|x|\to\infty$, $\Cov[h(t,x),h(t,0)]$ is governed by a
	boundary-layer regime near the initial time and satisfies
	\[
		\Cov[h(t,x),h(t,0)] \sim \kappa(t) \int_0^t p_{2r}(x)\,\ud r = \frac{2\kappa(t)}{\sqrt{\pi}}\,t^{3/2}\,|x|^{-2}\exp\!\left(-\frac{x^2}{4t}\right),
		\quad |x|\to\infty,
	\]
	where $\kappa(t) = \big(\E[Z(t,0)^{-1}]\big)^2$, $Z$ is the flat
	stochastic heat equation solution, and $p_t$ is the one-dimensional heat
	kernel. In sharp contrast with the narrow-wedge regime, where Gu--Pu
	\cite{gu.pu:25:spatial}*{Theorem~1.1} proved that for each fixed $t>0$,
	\[
		\Cov\!\left[h^{\mathrm{nw}}(t,x),h^{\mathrm{nw}}(t,0)\right]\sim \frac{t}{|x|},
		\qquad |x|\to\infty,
	\]
	the flat initial profile exhibits Gaussian decay, yielding, to the best of our knowledge, the first exact
	spatial covariance asymptotic for the KPZ equation under flat initial data.
	We also establish an explicit closed-form formula for the second moment of
	the continuum directed random polymer partition function.
\end{abstract}

\noindent\textbf{Keywords:} KPZ equation, stochastic heat equation, spatial
covariance, boundary-layer analysis, Malliavin calculus, Clark--Ocone formula,
continuum directed random polymer, normalized Green's function, inverse moments.\\
\noindent\textbf{Mathematics Subject Classification (2020):} 60H15, 60H07,
35R60.

	{\hypersetup{linkcolor=black}
		\tableofcontents
	}

\section{Introduction}\label{S:introduction}

In this paper, we study the Kardar--Parisi--Zhang (KPZ) equation~\cite{kardar.parisi.ea:86:dynamic}
\eqref{E:kpz}:
\begin{equation}\label{E:kpz}
	\partial_t h(t,x)=\tfrac12\partial_x^2 h(t,x)+\tfrac12\big(\partial_x h(t,x)\big)^2+\xi(t,x),
	\qquad t>0,\ x\in\R,
\end{equation}
subject to \textit{zero initial data}, $h(0,x) = 0$. Its Cole--Hopf solution is
defined by $h(t,x) = \log Z(t,x)$, where $Z$ solves the stochastic heat equation
(SHE),
\begin{equation}\label{E:she}
	\partial_t Z(t,x)=\tfrac12\partial_x^2 Z(t,x)+ Z(t,x)\,\xi(t,x),
	\qquad t>0,\ x\in\R,
\end{equation}
subject to the \textit{flat initial condition}:
\begin{equation}\label{E:flat-initial}
	Z(0,x) = u_0(x) \coloneqq 1.
\end{equation}
In both~\eqref{E:kpz} and~\eqref{E:she}, $\xi$ is a space--time white noise on
$\R_+\times\R$ on some probability space $(\Omega,\mathcal{F},\P)$, and
$(\mathcal{F}_t)_{t\ge 0}$ denotes the complete filtration generated by $\xi$.
The KPZ equation (and related polymer models) has been studied extensively; we
refer to \cite{corwin:12:kardar-parisi-zhang} for a broad overview. Our focus is
the \emph{fixed-time} spatial covariance of the height field (under flat initial
data, the law is stationary in $x$). In the narrow-wedge setting, Gu--Pu
\cite{gu.pu:25:spatial} determine the large-$|x|$ asymptotic of
$\Cov[h(t,x),h(t,0)]$ and show that the dominant contribution comes from a
boundary-layer geometry near the initial time. Under flat initial data,
integrability and upper bounds for spatial covariances have been obtained via
Malliavin calculus and functional inequalities (see, e.g.,
\cite{chen.khoshnevisan.ea:23:central} and the discussion in
\cite{gu.pu:25:spatial}).

Among the six fundamental KPZ initial geometries highlighted by Corwin
\cite[Section~2.2 and Figure~2]{corwin:12:kardar-parisi-zhang}, the flat
profile is one of the most canonical: together with narrow-wedge and Brownian
data, it gives a basic diffusive fixed point leading respectively to Airy$_1$,
Airy$_2$, and stationary limits, and to GOE, GUE, and Baik--Rains
fluctuations (see \cite{corwin:12:kardar-parisi-zhang} and references therein). Historically, however, exact results for flat data have lagged
behind those for narrow wedge, reflecting the greater difficulty of the flat
regime: the Airy$_2$ covariance decay was first observed by
Pr\"ahofer--Spohn~\cite{prahofer.spohn:02:scale} and rigorously proved by
Widom~\cite{widom:04:on} (see also
\cite{shinault.tracy:11:asymptotics} for higher-order terms), whereas the
super-exponential Airy$_1$ decay was only identified much later in
\cite{basu.busani.ea:23:on}.
The present paper continues
this timeline at the level of fixed-time KPZ covariances: Gu--Pu resolved the
narrow-wedge case, and we now resolve the flat case.

Motivated by \cite{gu.pu:25:spatial}, we study how the fixed-time, large-$|x|$
covariance depends on the initial profile \eqref{E:flat-initial}. The main
result of this paper is Theorem~\ref{thm:flat-main} below, which identifies the
exact asymptotic of the covariance $\Cov[h(t,x),h(t,0)]$ for flat initial data.

\begin{theorem}[Flat covariance asymptotic]\label{thm:flat-main}
	For flat initial data \eqref{E:flat-initial}, for all $t>0$, it holds that
	\begin{equation}\label{E:flat-goal}
		\begin{aligned}
			\Cov[h(t,x),h(t,0)]
			                             & \sim \kappa(t)\int_0^t p_{2r}(x)\,\ud r
			= \frac{2\kappa(t)}{\sqrt{\pi}}\,t^{3/2}\,
			|x|^{-2}\exp\!\left(-\frac{x^2}{4t}\right),
			\qquad |x|\to\infty,                                                              \\
			\text{where}\quad  \kappa(t) & \coloneqq \Big(\E\!\left[Z(t,0)^{-1}\right]\Big)^2
			\quad \text{and} \quad p_t(x) = \frac{1}{\sqrt{2 \pi t}} \exp\left(- \frac{x^2}{2t}\right).
		\end{aligned}
	\end{equation}
\end{theorem}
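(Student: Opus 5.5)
We would follow the Gu--Pu strategy via the Clark--Ocone formula. For $F=h(t,x)=\log Z(t,x)$ the Malliavin derivative is $D_{r,y}F = Z(t,x)^{-1}D_{r,y}Z(t,x)$. For flat data we have $D_{r,y}Z(t,x)=Z(r,y)\,\mathcal Z(t,x\,|\,r,y)$, where $\mathcal Z(t,x|r,y)$ is the propagator (point-to-point Green's function) of the SHE. Clark--Ocone together with It\^o's isometry gives
\[
\Cov[h(t,x),h(t,0)]=\int_0^t\!\!\int_\R \E\Big[\E[D_{r,y}h(t,x)\,|\,\mathcal F_r]\;\E[D_{r,y}h(t,0)\,|\,\mathcal F_r]\Big]\,\ud y\,\ud r .
\]
We write $D_{r,y}h(t,x)=\rho(t,x|r,y)$, where
\[
\rho(t,x|r,y)=\frac{Z(r,y)\mathcal Z(t,x|r,y)}{\int_\R Z(r,z)\mathcal Z(t,x|r,z)\,\ud z}
\]
is the polymer endpoint density at time $r$ (the normalized Green's function). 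The integrand is then a product of two conditional expectations of such densities, one anchored at $x$ and one at $0$.

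\textbf{Heuristic and target.} Each density $\rho(t,x|r,\cdot)$ is concentrated within $O(\sqrt{t})$ of $x$, but for small $r$ it is approximately a heat kernel. Take $r\to0$. Then $Z(r,y)\approx1$, and by the Markov property the denominator is approximately $Z(t,x)$ itself. So $\rho(t,x|r,y)\approx \mathcal Z(t,x|r,y)/Z(t,x)$. Moreover $\mathcal Z(t,x|r,y)$ is nearly independent of $\mathcal F_r$ and has the law of $Z$ started from a delta at $y$. The rare-event overlap of the two kernels at distance $|x|$ is where the covariance comes from. It should be of the form
\[
\int_\R \E\!\left[\frac{\mathcal Z(t,x|r,y)}{Z(t,x)}\right]\E\!\left[\frac{\mathcal Z(t,0|r,y)}{Z(t,0)}\right]\ud y .
\]

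To evaluate this, we would show that in the regime $|x|\to\infty$ we have $\mathcal Z(t,x|r,y)\approx p_{t-r}(x-y)\cdot(\text{a factor close to }Z(t,x))$ when $r$ is small. More precisely, we use the tilt $\mathcal Z(t,x|r,y)/p_{t-r}(x-y)$ and its joint law with $Z(t,x)$ as $r\downarrow0$. The dominant contribution should come from a boundary layer in which $r$ is not small: $y$ sits near $x/2$, where the two heat kernels multiply. Since $\int p_{t-r}(x-y)p_{t-r}(y)\,\ud y=p_{2(t-r)}(x)$, after the substitution $r\mapsto t-r$ the time integral becomes $\int_0^t p_{2r}(x)\,\ud r$. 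The multiplicative constant arises as follows. In the relevant regime the propagator factorizes into a deterministic heat kernel times noise that is asymptotically independent of the endpoint normalization. This leaves $\E[Z(t,x)^{-1}]\E[Z(t,0)^{-1}]=\kappa(t)$ by stationarity. The precise boundary layer must be determined: we expect the geometry where the polymer starting point lies near $x/2$, with all randomness decoupling.

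\textbf{Steps.} (1) Establish the Clark--Ocone covariance representation and the conditional-expectation formula for $D h$. (2) Split the $(r,y)$ domain into a boundary layer, where the mass concentrates and $p_{2(t-r)}(x)$ is maximal relative to the rest, and its complement. On the complement, use moment bounds on $\mathcal Z$ and on the inverse moments $\E[Z^{-p}]$, together with Gaussian tail estimates, to show the contribution is $o\big(t^{3/2}x^{-2}e^{-x^2/4t}\big)$. (3) On the boundary layer, prove
\[
\E\big[\mathcal Z(t,x|r,y)/Z(t,x)\,\big|\,\mathcal F_r\big]=p_{t-r}(x-y)\big(\E[Z(t,0)^{-1}]+o(1)\big)
\]
uniformly. (4) Use Laplace's method on $\int_0^t p_{2r}(x)\,\ud r$ to verify the closed form $\frac{2}{\sqrt\pi}t^{3/2}x^{-2}e^{-x^2/4t}$; this is routine, since the integral is dominated by $r$ near $t$.

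\textbf{Main obstacle.} The hardest part is step (3) together with controlling its error uniformly in the rare-event regime. The ratios $\mathcal Z/p$ are being evaluated at points where the heat kernel is exponentially small. Therefore the relative errors must be $o(1)$, not merely $O(e^{-cx^2})$ in absolute size. We would first try the polymer representation: use the Brownian-bridge description of $\mathcal Z(t,x|r,y)/p_{t-r}(x-y)$, whose law is shift invariant, so it has the same law as a pinned partition function independent of $|x|$. We would then combine this with a decorrelation argument, using Gaussian Poincaré or Malliavin covariance bounds, to replace the joint expectation by the product of inverse moments.
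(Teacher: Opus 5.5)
Your plan has the same skeleton as the paper's proof. You use Clark--Ocone, discard initial times $r\ge |x|^{-2+\varepsilon}$ by Gaussian tails, strip $Z(r,y)^2$, and replace $Z(t,x)$ by the future-only partition function $\widetilde Z(t,x;r)=\int_{\R}\mathcal{G}(t,x;r,u)\,\ud u$. Each of these costs $O(r^{1/4})$, via the Markov property and $\Norm{Z(r,\cdot)-1}_k\lesssim r^{1/4}$. Each boundary-layer factor then becomes the deterministic number $\E[\bar{\mathcal{G}}(t,x;r,y)\,\widetilde Z(t,x;r)^{-1}]$, and one shows it tends to $\E[Z(t,0)^{-1}]$.

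The genuine difference is how you decorrelate $\bar{\mathcal{G}}(t,x;r,y)$ from $\widetilde Z(t,x;r)^{-1}$ as $|x-y|\to\infty$. The paper uses shear invariance to write this covariance as $\Cov(F_r,G_r\circ\theta_{t,\nu})$ with $\nu=(y-x)/(t-r)$. It then proves that shears are strongly mixing via the Wiener chaos decomposition, and needs $L^2$-continuity of $F_r=\bar{\mathcal{G}}(t,0;r,0)$ in $r$ to make the limit uniform on the layer. This is soft and gives no rate.

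Your Clark--Ocone/Poincar\'e route works and is more direct. From $D_{u,w}\mathcal{G}(t,x;r,y)=\mathcal{G}(t,x;u,w)\mathcal{G}(u,w;r,y)$, $D_{u,w}\widetilde Z(t,x;r)^{-1}=-\widetilde Z(t,x;r)^{-2}\mathcal{G}(t,x;u,w)\widetilde Z(u,w;r)$ and the uniform moment bounds, the two derivatives have $L^2$ norms at most $C\,q^{x-y,t-r}_{u-r}(w-y)$ and $C\,p_{t-u}(x-w)$. Their product integrates in $w$ to a Gaussian density with variance between $t-u$ and $2(t-u)$, evaluated at $|x-y|(t-u)/(t-r)$. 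Hence
\[
\Bigl|\Cov\bigl(\bar{\mathcal{G}}(t,x;r,y),\,\widetilde Z(t,x;r)^{-1}\bigr)\Bigr|
\lesssim\int_0^{t-r}v^{-1/2}\exp\Bigl(-\frac{|x-y|^2\,v}{4(t-r)^2}\Bigr)\ud v
\lesssim\frac{t-r}{|x-y|},
\]
uniformly in $r\le t/2$. This bypasses both the mixing lemma and the continuity-in-$r$ argument. Since every other error is $O(r^{1/4})$ against a kernel concentrated at $r\sim |x|^{-2}$, it would even give a polynomial rate in the theorem.

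Two of your heuristic sentences should be fixed, although steps (2)--(4) implicitly use the right versions.

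First, the boundary layer is where $r$ \emph{is} small: the mass of $\int_0^t p_{2(t-r)}(x)\,\ud r$ sits at $r\sim t^2/x^2$. Smallness of $r$ is exactly what licenses $Z(r,\cdot)\approx 1$ and the future-only replacement. You should state that replacement as an explicit step, since otherwise $\E[\,\cdot\mid\mathcal{F}_r]$ stays random and the Poincar\'e bound has no deterministic target.

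Second, $\bar{\mathcal{G}}(t,x;r,y)$ is not ``a factor close to $Z(t,x)$''. If it were, each factor would be $\approx 1$ and you would get the naive $\kappa=1$. What produces $\kappa(t)=\E[Z(t,0)^{-1}]^2$ is the asymptotic \emph{independence} of $\bar{\mathcal{G}}(t,x;r,y)$ from the normalization inside each factor, giving $\E[\bar{\mathcal{G}}]\,\E[Z(t-r,0)^{-1}]\to\E[Z(t,0)^{-1}]$. Decoupling the two factors from each other is then automatic, since each is $L^2$-close to a constant.
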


To our knowledge, Theorem~\ref{thm:flat-main} provides the first exact spatial
covariance asymptotic for the KPZ equation under flat initial data.

We refer to the asymptotic relation \eqref{E:flat-goal} as the flat-profile
prediction. It is natural to view \eqref{E:flat-goal} together with the
covariance decay of the universal KPZ scaling limits: the flat and narrow-wedge
initial conditions lead (after the $1\!:\!2\!:\!3$ KPZ scaling) to different
Airy processes and, correspondingly, very different spatial decorrelation
behaviors. We refer to \cite{gu.pu:25:spatial} for a detailed discussion and
further references. See also \cite{widom:04:on,shinault.tracy:11:asymptotics}
for rigorous Airy$_2$ covariance asymptotics and
\cite{basu.busani.ea:23:on} for correlation decay of the Airy$_1$ process.

At the level of the KPZ scaling limits, the contrast between droplet/Airy$_2$
and flat/Airy$_1$ is particularly sharp. For positive functions $f$ and $g$, we
write $f(x)\sim g(x)$ as $x\to\infty$ if $f(x)/g(x)\to 1$ (in particular, $\sim$
identifies the leading constant), while $f(x)\asymp g(x)$ means two-sided bounds
up to multiplicative constants for all sufficiently large $x$. With this
convention,
\begin{align}\label{E:nw-flat-Airy}
	\begin{aligned}
		\Cov[\mathcal{A}_2(x),\mathcal{A}_2(0)] & = 2x^{-2} + O(x^{-4}),                                  &  & x\to\infty &  & \text{\cite{widom:04:on}},          \\
		\Cov[\mathcal{A}_1(x),\mathcal{A}_1(0)] & = \exp\!\left(-\left(\frac{4}{3}+o(1)\right)x^3\right), &  & x\to\infty &  & \text{\cite{basu.busani.ea:23:on}}.
	\end{aligned}
\end{align}
The polynomial decay in the Airy$_2$ case and the stretched-exponential decay in
the Airy$_1$ case have a natural geometric interpretation in polymer/LPP models:
for droplet data the two profiles are influenced by space--time regions with a
macroscopic overlap (curvature forces geodesics/polymers to coalesce near a
pinned endpoint), whereas in the flat case typical influential regions become
essentially disjoint at large separation and correlations are driven by rare
coalescence events.

The fixed-time, large-separation covariances of the KPZ equation exhibit an
analogous dichotomy. Writing $h^{\mathrm{nw}}$ for the KPZ solution started from
narrow wedge and $h^{\mathrm{flat}}$ for the flat solution, Gu--Pu
\cite{gu.pu:25:spatial}*{Theorem~1.1} and \eqref{E:flat-goal} together yield
that for each fixed $t>0$,
\begin{equation}\label{E:nw-flat-KPZ}
	\begin{aligned}
		\Cov\!\left[h^{\mathrm{nw}}(t,x),h^{\mathrm{nw}}(t,0)\right]     & \sim \frac{t}{|x|},                      &  & |x|\to\infty, \\
		\Cov\!\left[h^{\mathrm{flat}}(t,x),h^{\mathrm{flat}}(t,0)\right] & \sim \kappa(t)\int_0^t p_{2r}(x)\,\ud r, &  & |x|\to\infty.
	\end{aligned}
\end{equation}
Thus the covariance decay changes from polynomial (narrow wedge) to Gaussian
(flat), paralleling the Airy$_2$ versus Airy$_1$ contrast above.
Heuristically, delta initial data pins every directed polymer at the origin,
so $h^{\mathrm{nw}}(t,x)$ and $h^{\mathrm{nw}}(t,0)$ share noise near time
zero regardless of separation, producing the slow $|x|^{-1}$ tail; flat
initial data lets the dominant paths originate from well-separated regions,
and their noise overlap is mediated by heat-kernel diffusion, imprinting the
Gaussian factor $e^{-x^{2}/(4t)}$.

\begin{remark}[Strategy and main difficulties]\label{R:strategy}
	The proof follows the Gu--Pu blueprint, with two additional flat-profile
	inputs. First, we use the Clark--Ocone formula to represent the covariance by
	an explicit deterministic kernel multiplied by a random boundary-layer factor.
	Second, we show a boundary-layer reduction: only times $s\lesssim
		|x|^{-2+\varepsilon}$ contribute at leading order as $|x|\to\infty$. Third, on
	this boundary layer we pass to a ``future-only'' normalization and then use
	shear invariance and shear mixing to identify the limiting constant, which
	reduces to the one-point inverse moment $\E[Z(t,0)^{-1}]$. The technical
	bottlenecks are uniform moment/inverse-moment bounds needed to justify the
	normalizations and the required uniformity in $(s,y)$ on the boundary-layer
	window $|y-x/2|=O(1)$.
\end{remark}

This study crucially relies on the properties of the Green's function
corresponding to
\begin{equation}\label{E:G}
	\begin{dcases}
		\partial_t \calG_{\beta}(t,x;s,y)
		= \tfrac12 \partial_x^2 \calG_{\beta}(t,x;s,y)
		+ \beta\, \calG_{\beta}(t,x;s,y)\, \xi(t,x),
		\qquad t > s \ge 0,\; x,y \in \R, \\
		\calG_{\beta}(s,\cdot\,;s,y) = \delta_y(\cdot),
	\end{dcases}
\end{equation}
and its normalized version $\bar{\calG}_\beta$ defined by
\begin{equation}\label{E:barG}
	\bar{\mathcal{G}}_\beta(t,x;s,y)
	\coloneqq \frac{\mathcal{G}_\beta(t,x;s,y)}{p_{t-s}(x-y)}, \quad \text{for $t \ge s$ and  $x, y\in \R$.}
\end{equation}
Here, $\beta \in \R$. For simplicity of notation, throughout the paper, we drop
the superscript $\beta$ when $\beta = 1$, that is, $\mathcal{G} =
	\mathcal{G}_1$. It is easy to see that $\bar{\calG}_\beta$ satisfies the
following integral equation
\begin{align}\label{E:barG-mild}
	\bar{\calG}_\beta(t,x;s,y) = 1 + \beta \int_s^t \int_{\R} q_{r-s}^{x-y,t-s}(z-y)\, \bar{\calG}_\beta(r,z;s,y) \, \xi(\ud r, \ud z),
\end{align}
where $q_u^{x,\tau}(z)$ is the Brownian-bridge density
\begin{equation}\label{E:bridge-kernel-Rd}
	\begin{aligned}
		q_u^{x,t}(z)
		 & = \frac{p_{t-u}(x-z)\,p_u(z)}{p_t(x)}
		= p_{\frac{u(t-u)}{t}} \left(z-\frac{u}{t}x\right)                                                                \\
		 & = \left(\frac{t}{2\pi u(t-u)}\right)^{1/2} \exp\!\left(-\frac{t}{2u(t-u)}\left|z-\frac{u}{t}x\right|^2\right),
		\quad \text{for $u\in(0,t)$, $x, z \in \R$.}
	\end{aligned}
\end{equation}

The Green's function $\calG$ corresponds to the \emph{point-to-point partition
	function} of the \textit{continuum directed random polymer} (CDRP). When $d =
	1$, in~\cite{alberts.khanin.ea:14:continuum, alberts.janjigian.ea:22:greens},
the Green's function $\calG_\beta$ and its normalized version
$\bar{\calG}_\beta$ have been studied in detail. The higher-dimensional
counterpart has also been recently studied in~\cite{chen.ouyang.ea:26:class}. The
normalized Green's function $\bar{\calG}_\beta$ is a fundamental object in the
theory of the stochastic heat equation, with connections to the Brownian bridge,
the continuum directed random polymer, and the Malliavin derivative of the
solution. We will explain these connections and related work in more detail in
Section~\ref{S:moments} below. We also derive an
explicit second-moment formula for $\bar{\mathcal{G}}_\beta$. We use $\Phi(x)$
to denote the standard normal cumulative distribution function, namely,
\begin{equation}\label{E:Phi}
	\Phi(x) \coloneqq \frac{1}{\sqrt{2\pi}}\int_{-\infty}^{x}e^{-t^2/2}\,\ud t.
\end{equation}

\begin{theorem}[Explicit second moment of $\bar{\mathcal{G}}_\beta$]\label{thm:barG-second-moment-intro}
	For all $t\ge s$, and $x, y \in\R$ with $\tau \coloneqq t-s$, it holds that
	\begin{equation}\label{E:2nd-Mom-1d-intro}
		\E\left[\bar{\calG}_\beta(t,x;s,y)^2\right]
		=1+\sqrt{\pi\beta^4\tau}\,e^{\beta^4\tau/4}\,
		\Phi\!\left(\beta^2\sqrt{\frac{\tau}{2}}\right),
	\end{equation}
	which is the unique solution of the Volterra integral equation
	\begin{equation}\label{E:volterra-1d-intro}
		m_2(\tau)=1+\frac{\beta^2\sqrt{\tau}}{2\sqrt{\pi}}
		\int_0^\tau\frac{m_2(u)}{\sqrt{u(\tau-u)}}\,\ud u,
		\qquad m_2(0)=1.
	\end{equation}
	In particular, $\E\left[\bar{\calG}_\beta(t,x;s,y)^2\right]$ does not depend
	on $x$ and $y$ and only depends on $t-s\ge 0$.
\end{theorem}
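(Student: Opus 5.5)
We derive the Volterra equation from the mild form \eqref{E:barG-mild} via Itô's isometry, solve it explicitly by Laplace transform, and invoke uniqueness.

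\textbf{Step 1: the Volterra equation.} Squaring \eqref{E:barG-mild} and taking expectations (the stochastic integral has mean zero) gives
\begin{align*}
\E\big[\bar{\calG}_\beta(t,x;s,y)^2\big] = 1+\beta^2\int_s^t\int_\R \big(q_{r-s}^{x-y,\tau}(z-y)\big)^2\,\E\big[\bar{\calG}_\beta(r,z;s,y)^2\big]\,\ud z\,\ud r .
\end{align*}
We first argue by a Picard/chaos argument that $\E[\bar{\calG}_\beta(r,z;s,y)^2]$ depends only on $r-s$ and not on $z,y$. Each iterate of the mild equation preserves this property, provided the spatial integral of the squared bridge kernel is independent of the spatial variables. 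Write $m_2(u)$ for this common value. From \eqref{E:bridge-kernel-Rd}, $q_u^{x,\tau}$ is a Gaussian density with variance $\sigma^2=u(\tau-u)/\tau$, so
\[
\int_\R q_u(z)^2\,\ud z=(4\pi\sigma^2)^{-1/2}=\frac{\sqrt{\tau}}{2\sqrt{\pi}\sqrt{u(\tau-u)}} ,
\]
independently of $x$ and $z$. Substituting yields exactly \eqref{E:volterra-1d-intro}. The same computation also justifies the independence claim inductively.

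\textbf{Step 2: uniqueness.} Set $f(\tau)=m_2(\tau)/\sqrt\tau$. Dividing by $\sqrt{\tau}$ is awkward, so instead note that the kernel $\sqrt\tau/\sqrt{u(\tau-u)}$ is weakly singular and positive. For bounded solutions on $[0,T]$, the difference $d$ of two solutions satisfies
\[
|d(\tau)|\le C\sqrt{T}\int_0^\tau \frac{|d(u)|}{\sqrt{u(\tau-u)}}\,\ud u .
\]
Iterating this bound gives factorially decaying estimates (the Beta-integrals produce Gamma-function ratios), so $d\equiv0$. Equivalently, the Neumann series converges.

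\textbf{Step 3: solving explicitly.} The $\sqrt\tau$ factor obstructs a direct convolution structure. The cleanest route is to verify the claimed formula by expanding it into the Neumann series. Iterate the equation: with $\lambda=\beta^2/(2\sqrt\pi)$, the $n$-th iterate is $\lambda^n$ times an integral over the simplex. Evaluate it by repeated Beta integrals, $\int_0^\tau u^{a}(\tau-u)^{-1/2}u^{-1/2}\,\ud u=\tau^{a}B(a+\tfrac12,\tfrac12)$, applied to terms of the form $u^{k/2}$. Concretely, if $m_2=\sum_k c_k\tau^{k/2}$, the equation gives
\[
c_{k+1}=\lambda\,B\big(\tfrac{k+1}{2},\tfrac12\big)c_k=\lambda\sqrt\pi\,\frac{\Gamma(\frac{k+1}{2})}{\Gamma(\frac{k}{2}+1)}\,c_k .
\]
Telescoping gives $c_k=(\beta^2/2)^k\,\Gamma(\tfrac12)/\Gamma(\tfrac k2+1)\cdot(\text{const})$, i.e. $m_2(\tau)=\sum_{k\ge0}\frac{(\beta^2\sqrt{\tau}/2)^k\sqrt\pi}{\Gamma(k/2+1)}\cdot\frac{1}{\sqrt\pi}$ up to normalization. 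This is a Mittag-Leffler function $E_{1/2}(\beta^2\sqrt\tau/2)$. The identity $E_{1/2}(z)=e^{z^2}\operatorname{erfc}(-z)=2e^{z^2}\Phi(\sqrt2 z)$ then gives, with $z=\beta^2\sqrt\tau/2$, the form $e^{\beta^4\tau/4}\,2\Phi(\beta^2\sqrt{\tau/2})$. Reconcile this with \eqref{E:2nd-Mom-1d-intro} by checking the constants carefully; the displayed form $1+\sqrt{\pi\beta^4\tau}\,e^{\beta^4\tau/4}\Phi(\cdot)$ suggests an odd/even-term split.

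\textbf{Main obstacle.} The series-to-closed-form bookkeeping is where the constants must be matched. Alternatively, one can verify \eqref{E:2nd-Mom-1d-intro} directly by plugging it into \eqref{E:volterra-1d-intro}, using the derivative of $e^{a\tau}\Phi(\sqrt{2a\tau})$ and Beta integrals, then conclude by Step 2.
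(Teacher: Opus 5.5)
Your overall route (It\^o isometry on \eqref{E:barG-mild}, the Volterra equation, Neumann series via Beta integrals, a Mittag-Leffler function, the erfc identity) is the paper's route. Your Step~1 is fine; the Picard induction for independence of $(x,y)$ is a valid self-contained substitute for the paper's appeal to shear invariance. The genuine gap is in Step~3, where the telescoping is wrong. From $c_0=1$ and $c_{k+1}=\frac{\beta^2}{2}\frac{\Gamma(\frac{k+1}{2})}{\Gamma(\frac{k}{2}+1)}c_k$, the product $\prod_{j=0}^{k-1}\Gamma(\frac{j+1}{2})/\Gamma(\frac{j}{2}+1)$ collapses to $\Gamma(\frac12)/\Gamma(\frac{k+1}{2})$, not to $\Gamma(\frac12)/\Gamma(\frac k2+1)$. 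No overall constant repairs this, since the ratio $\Gamma(\frac k2+1)/\Gamma(\frac{k+1}{2})$ depends on $k$. You therefore land on $\ML{1/2}{1}(z)=2e^{z^2}\Phi(\sqrt2\,z)$ with $z=\beta^2\sqrt\tau/2$. That is the second moment $g$ of the \emph{flat} parabolic Anderson model, the solution of the Abel-kernel equation \eqref{E:volterra-flat}, and it does not solve \eqref{E:volterra-1d-intro}. Its coefficient of $\beta^2\sqrt\tau$ is $1/\sqrt\pi$, while the first iterate of \eqref{E:volterra-1d-intro} forces $\frac{\beta^2\sqrt\tau}{2\sqrt\pi}B(\frac12,\frac12)=\frac{\sqrt\pi}{2}\beta^2\sqrt\tau$. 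So ``checking the constants'' cannot reconcile the two, and the proposal never reaches \eqref{E:2nd-Mom-1d-intro}. With the correct coefficients you get $m_2(\tau)=\sqrt\pi\,\ML{1/2}{1/2}(z)$. The missing step is an index shift, not an odd/even split: $\ML{1/2}{1/2}(z)=\frac{1}{\sqrt\pi}+z\,\ML{1/2}{1}(z)$. Combined with $\ML{1/2}{1}(z)=e^{z^2}\Erfc(-z)=2e^{z^2}\Phi(\sqrt2\,z)$, this gives $m_2(\tau)=1+2\sqrt\pi\,z\,e^{z^2}\Phi(\sqrt2\,z)$, which is exactly \eqref{E:2nd-Mom-1d-intro}.

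Step~2 also fails as written. Once you replace $\sqrt\tau$ by $\sqrt T$, the kernel $1/\sqrt{u(\tau-u)}$ is scale invariant: $\int_0^\tau \frac{\ud u}{\sqrt{u(\tau-u)}}=\pi$ for every $\tau$. Iterating therefore gives only $\sup|d|\,(C\sqrt T\,\pi)^n$, which is geometric rather than factorial and does not vanish unless $T$ is small. The fix is to keep the factor $\sqrt\tau$. Induction then shows $|d(\tau)|$ is dominated by $\sup|d|$ times the $n$-th Neumann term, namely $|d(\tau)|\le \sup|d|\,\frac{\sqrt\pi}{\Gamma(\frac{n+1}{2})}\bigl(\frac{\beta^2\sqrt\tau}{2}\bigr)^n\to0$. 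This is where the Gamma-ratio decay you invoked actually comes from.
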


\paragraph{Organization of the paper.} Section~\ref{S:clark-ocone} records the
Clark--Ocone covariance representation and basic moment bounds, including a
Gaussian-tail upper bound and a boundary-layer simplification.
Section~\ref{S:moments} establishes moment and inverse-moment bounds for the
normalized Green's function, including an explicit closed-form second moment
(Theorem~\ref{thm:barG-second-moment-intro}). Section~\ref{SS:shear-mixing}
proves that the shear automorphisms are strongly mixing on the white-noise
probability space (Lemma~\ref{L:shear-mixing}), which is the key ingredient for
identifying the boundary-layer constant in Section~\ref{S:spatial-decay}.
Section~\ref{S:spatial-decay} proves Theorem~\ref{thm:flat-main} by (i) reducing
the large-$|x|$ covariance asymptotics to a boundary-layer constant and (ii)
identifying this constant via a future-only profile and shear mixing.

\section{Clark--Ocone representation and basic bounds}\label{S:clark-ocone}

\subsection{Clark--Ocone covariance representation}\label{SS:clark-ocone-cov}

Let $D_{s,y}$ denote the Malliavin derivative at $(s,y)\in\R_+\times\R$.  For
SHE solutions with sufficiently integrable deterministic initial data, one has
the identity \eqref{E:dz-green} (see, e.g., standard references on Malliavin
calculus for SPDEs)
\begin{equation}\label{E:dz-green}
	D_{s,y} Z(t,x)=\mathcal{G}(t,x;s,y)\, Z(s,y),\qquad 0<s<t.
\end{equation}
where $\mathcal{G}(t,x;s,y)$ is given in~\eqref{E:G}. The key point (used
extensively by Gu--Pu) is that $\mathcal{G}(t,x;s,y)$ depends only on the noise
in the time interval $(s,t]$ and is independent of $\mathcal{F}_s$, while
$\bar{\mathcal{G}}$ has bounded (both positive and negative) moments on compact
time intervals; see Lemma~\ref{L:barG-moments} below. \medskip

Assuming $h(t,x)\in\mathbb{D}^{1,2}$, the Clark--Ocone formula yields
\begin{equation}\label{E:clark-ocone}
	h(t,x)-\E[h(t,x)]
	=\int_0^t\int_{\R} \rho_{t,x}(s,y)\,\xi(\ud s\,\ud y),
\end{equation}
where the predictable integrand is
\begin{equation}\label{E:rho-def}
	\rho_{t,x}(s,y)
	\coloneqq \E\!\left[D_{s,y} h(t,x)\mid \mathcal{F}_s\right]
	=\E\!\left[\frac{\mathcal{G}(t,x;s,y)\,Z(s,y)}{Z(t,x)}\ \Big|\ \mathcal{F}_s\right].
\end{equation}
Consequently, for $x\in\R$,
\begin{equation}\label{E:cov-basic}
	\Cov\big[h(t,x),h(t,0)\big]
	=\int_0^t\int_{\R} \E\!\left[\rho_{t,x}(s,y)\rho_{t,0}(s,y)\right]\ud y\,\ud s.
\end{equation}
Identity \eqref{E:cov-basic} is the universal starting point: all dependence on
initial data is hidden in $Z(s,\cdot)$ and $Z(t,\cdot)$ inside
\eqref{E:rho-def}.
We will refer to \eqref{E:clark-ocone} as the Clark--Ocone representation of
$h(t,x)$.

For any $(t,x) \in \R_+ \times \R$, \eqref{E:cov-basic} can be rewritten as
\begin{equation}\label{E:cov-KX}
	\Cov\big[h(t,x),h(t,0)\big]
	=\int_0^t\int_{\R} K(t,s,x,y) \E\!\left[ \mathcal{X}(t,s,x,y) \right]\ud y\,\ud s,
\end{equation}
where, for all $ 0 \le s < t$ and  $x, y \in \R$,  we use the following notation:
\begin{gather}\label{E:K}
	K(t,s,x,y) = p_{t-s}(x-y) p_{t-s}(y),\\
	\mathcal{X}(t,s,x,y) = Z(s,y)^2\,\mathcal{B}(t,x;s,y)\,\mathcal{B}(t,0;s,y), \label{E:Xflat-Bflat}\\
	\mathcal{B}(t,x;s,y)
	\coloneqq \E_{s,t}\!\left[\frac{\bar{\mathcal{G}}(t,x;s,y)}{Z(t,x)}\right]. \label{E:Bflat-def}
\end{gather}
Here $\E_{s,t}$ denotes expectation over the noise in the time interval $(s,t]$ only.
We will refer to \eqref{E:cov-KX} as the kernelized covariance representation. Also, we denote by $\Norm{\cdot}_k$ the norm in $L^k(\Omega)$.

\subsection{Known integrability and a Gaussian-tail upper bound}\label{SS:gaussian-tail}

The works \cite{chen.khoshnevisan.ea:22:central,
	chen.khoshnevisan.ea:23:central} study spatial averages under flat initial data.
In particular, for the parabolic Anderson model ($d=1$, space--time white noise,
$\sigma(u)=u$) and $g(u)=\log u$, \cite{chen.khoshnevisan.ea:23:central} proves
that for every $t>0$,
\[
	\int_{\R}\Cov[h(t,x),h(t,0)]\,\ud x\in(0,\infty),
\]
which in particular implies integrability of the covariance. (This
exponential-type spatial decorrelation under flat initial data is also noted in
the introduction of Gu--Pu \cite{gu.pu:25:spatial}.) We collect the moment
estimates used in these bounds in Section~\ref{S:moments}.

For $t \ge0,$ and $x \in \mathbb{R}$, we denote $I_{\mathrm{flat}}(t,x)$ the
proxy integral,
\begin{equation}\label{E:Iflat-def}
	I_{\mathrm{flat}}(t,x)
	\coloneqq \int_0^t\int_{\R} K(t,s,x,y)\,\ud y\,\ud s
	=\int_0^t p_{2(t-s)}(x)\,\ud s
	=\int_0^t p_{2r}(x)\,\ud r.
\end{equation}
By the same boundary asymptotic,
\begin{equation}\label{E:Iflat-asymp}
	I_{\mathrm{flat}}(t,x)
	\sim \frac{2}{\sqrt{\pi}}\,t^{3/2}\,|x|^{-2}\exp\!\left(-\frac{x^2}{4t}\right),
	\qquad |x|\to\infty.
\end{equation}
We will use \eqref{E:Iflat-def} and its boundary asymptotic
\eqref{E:Iflat-asymp} throughout.

\begin{proposition}[Flat initial: Gaussian-tail covariance upper bound]\label{P:flat-cov-upper}
	Fix $t>0$. For the parabolic Anderson model in $1+1$ dimensions with flat initial
	data, there exists $C_t<\infty$ such that for all $x\in\R$,
	\begin{equation}\label{E:flat-cov-upper}
		\big|\Cov[h(t,x),h(t,0)]\big|
		\le C_t \int_0^t p_{2r}(x)\,\ud r
		= C_t\,I_{\mathrm{flat}}(t,x).
	\end{equation}
	In particular,
	$
		\Cov[h(t,x),h(t,0)]
		=O\big(|x|^{-2}e^{-x^2/(4t)}\big)
	$
	as $|x|\to\infty$.
\end{proposition}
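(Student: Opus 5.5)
The plan is to begin from the kernelized covariance representation \eqref{E:cov-KX} and show that $\sup_{0\le s<t,\,x,y\in\R}\big|\E[\mathcal{X}(t,s,x,y)]\big|\le C_t$. Once this is available, \eqref{E:cov-KX} gives
\[
	\big|\Cov[h(t,x),h(t,0)]\big|\le C_t\int_0^t\int_\R K(t,s,x,y)\,\ud y\,\ud s=C_t\,I_{\mathrm{flat}}(t,x)
\]
by \eqref{E:Iflat-def}. The statement ``in particular'' then follows from the boundary asymptotic \eqref{E:Iflat-asymp}. Note that $\mathcal{X}\ge 0$, since $Z>0$ and $\bar{\mathcal G}>0$. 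So the absolute value only matters on the left-hand side, and \eqref{E:cov-KX} already exhibits the covariance as a nonnegative integral. In fact the proposition gives the upper bound on the covariance itself.

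The uniform bound on $\E[\mathcal{X}]$ comes from Hölder's inequality. Conditional Jensen applied to $\mathcal{B}$, followed by Cauchy--Schwarz, gives
\[
	\E\big[\mathcal{X}\big]\le \Norm{Z(s,y)}_{8}^{2}\,\Norm{\mathcal{B}(t,x;s,y)}_4\,\Norm{\mathcal{B}(t,0;s,y)}_4,
	\qquad
	\Norm{\mathcal{B}(t,x;s,y)}_4\le \Norm{\bar{\mathcal G}(t,x;s,y)}_8\,\Norm{Z(t,x)^{-1}}_8 .
\]
Here Jensen is used in the form $\E[\E_{s,t}[W]^4]\le\E[W^4]$, and Hölder is then applied to the product. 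Each factor is bounded uniformly over $0\le s<t$ and $x,y$:
\begin{itemize}
	\item $\sup_{s\le t,y}\Norm{Z(s,y)}_8<\infty$, by the standard moment bounds for flat SHE; by stationarity in $y$ this reduces to a bound depending only on $s$, which is monotone in $s$.
	\item $\sup\Norm{\bar{\mathcal G}(t,x;s,y)}_8<\infty$, by Lemma~\ref{L:barG-moments}, since the moments of $\bar{\mathcal G}$ depend only on $t-s\le t$ and are bounded on compact time intervals. Theorem~\ref{thm:barG-second-moment-intro} illustrates this explicitly for the second moment.
	\item $\Norm{Z(t,x)^{-1}}_8<\infty$, by the negative-moment bounds for SHE with flat data, which are independent of $x$ by stationarity.
\end{itemize}

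A subtle point is that $Z(t,x)^{-1}$ inside $\mathcal{B}$ is not $\mathcal F_s$-independent. However, $\mathcal{B}$ is defined by integrating out only the $(s,t]$ noise, so the bound above is legitimate: it only uses the full expectation of a product of random variables, never independence.

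The main obstacle I anticipate is the uniformity of the inverse moment $\E[Z(t,x)^{-p}]$ and of the moments of $\bar{\mathcal G}$ as $s\to t$, where the kernel degenerates. For $\bar{\mathcal G}$ this is handled by the mild formulation \eqref{E:barG-mild} combined with BDG, using the bridge kernel bound
\[
	\int q_{r-s}^{x-y,t-s}(z-y)^2\,\ud z\lesssim\Big(\frac{\tau}{(r-s)(t-r)}\Big)^{1/2},
\]
which is integrable in $r$. This yields a Volterra-type inequality analogous to \eqref{E:volterra-1d-intro}, with constants depending only on $\tau\le t$. For the inverse moments of $Z(t,x)$ I would quote the known small-ball estimates for positive SHE solutions (Mueller--Nualart type), which are uniform in $x$ for flat data. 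I would defer both to Section~\ref{S:moments}.

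Finally, the assumption $h\in\mathbb D^{1,2}$ needed for \eqref{E:clark-ocone} follows from the same moment and inverse-moment bounds, since $D h=DZ/Z$.
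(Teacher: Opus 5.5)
Your proposal is correct and is essentially the paper's argument. The paper applies Cauchy--Schwarz to $\E[\rho_{t,x}\rho_{t,0}]$ and then H\"older to $\Norm{D_{s,y}h(t,x)}_2$, whereas you bound $\E[\mathcal{X}]$ directly by H\"older (exactly as in Lemma~\ref{L:X-moment}); both use the same uniform bounds on $\bar{\mathcal G}$, $Z(s,y)$ and $Z(t,x)^{-1}$ together with $\int_\R p_{t-s}(x-y)p_{t-s}(y)\,\ud y=p_{2(t-s)}(x)$. Your observation that $\mathcal{X}\ge 0$, so the covariance is nonnegative, is correct, and since $\E[Z(t,x)^{-p}]$ does not depend on $s$, no uniformity as $s\to t$ is needed for that factor.
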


\noindent Proposition~\ref{P:flat-cov-upper} yields the Gaussian-tail bound
\eqref{E:flat-cov-upper} used below.

\begin{proof}
	By conditional Jensen and Cauchy--Schwarz,
	\begin{align*}
		\big|\Cov[h(t,x),h(t,0)]\big|
		 & \le \int_0^t\int_{\R}
		\Norm{D_{s,y}h(t,x)}_2\,\Norm{D_{s,y}h(t,0)}_2\,
		\ud y\,\ud s.
	\end{align*}
	Using $D_{s,y}h=D_{s,y}Z/Z$ and \eqref{E:dz-green},
	\[
		D_{s,y}h^{\mathrm{flat}}(t,x)
		=\frac{\mathcal{G}(t,x;s,y)\,Z(s,y)}{Z(t,x)}
		=p_{t-s}(x-y)\,
		\bar{\mathcal{G}}(t,x;s,y)\,
		\frac{Z(s,y)}{Z(t,x)}.
	\]
	Apply H\"older with exponents $6,6,6$ to get
	\[
		\Norm{D_{s,y}h(t,x)}_2
		\le p_{t-s}(x-y)\,
		\Norm{\bar{\mathcal{G}}(t,x;s,y)}_6\,
		\Norm{Z(s,y)}_6\,
		\Norm{Z(t,x)^{-1}}_6.
	\]
	The last three factors are uniformly bounded over $0<s<t$ and $x,y\in\R$ by
	Lemma~\ref{L:barG-moments} and Proposition~\ref{P:Z-moments}. Thus,
	$\Norm{D_{s,y}h(t,x)}_2\le C_t\,p_{t-s}(x-y)$ and similarly
	$\Norm{D_{s,y}h(t,0)}_2\le C_t\,p_{t-s}(y)$. Plugging into the previous
	display yields
	\[
		\big|\Cov[h(t,x),h(t,0)]\big|
		\le C_t\int_0^t\int_{\R} p_{t-s}(x-y)p_{t-s}(y)\,\ud y\,\ud s
		=C_t\int_0^t p_{2(t-s)}(x)\,\ud s
		=C_t\,I_{\mathrm{flat}}(t,x),
	\]
	as desired.
\end{proof}

\paragraph{Identifying the covariance constant.} Motivated by the Gu--Pu style
decomposition \eqref{E:cov-basic}, one is led to expect that for each fixed
$t>0$ there exists $\kappa(t)\in(0,\infty)$ such that
\begin{equation} \label{conj:kpz-cov}
	\Cov[h(t,x),h(t,0)]
	\sim \kappa(t)\,I_{\mathrm{flat}}(t,x),
	\qquad |x|\to\infty,
\end{equation}
where the proxy $I_{\mathrm{flat}}$ is defined in \eqref{E:Iflat-def}. A naive
guess would be $\kappa(t)=1$ (since $\E[\bar{\mathcal{G}}]=1$), but
Theorem~\ref{thm:flat-main} identifies
\[
	\kappa(t)=\Big(\E\!\left[Z(t,0)^{-1}\right]\Big)^2>1,
\]
where the strict inequality follows from Jensen's inequality and non-degeneracy
of $Z(t,0)$.

\begin{lemma}[Removing $Z(s,y)^2$ from the boundary
		layer]\label{L:f-key-assump-Z} Fix $t>0$.
	There exists $C=C(t)<\infty$ such that for all $s\in(0,t\wedge 1]$ and all
	$x,y\in\R$,
	\begin{equation}\label{E:flat-key-assump-Z}
		\left|\E\!\left[\mathcal{X}(t,s,x,y)\right]
		-\E\!\left[\mathcal{B}(t,x;s,y)\,\mathcal{B}(t,0;s,y)\right]\right|
		\le C\, s^{1/4},
	\end{equation}
	where $\mathcal{B}$ is defined in \eqref{E:Bflat-def}.
\end{lemma}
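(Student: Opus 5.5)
The plan is to write $\mathcal{X}=Z(s,y)^2\,\mathcal{B}_x\mathcal{B}_0$ with $\mathcal{B}_x\coloneqq\mathcal{B}(t,x;s,y)$ and $\mathcal{B}_0\coloneqq\mathcal{B}(t,0;s,y)$. Then
\[
\E[\mathcal{X}]-\E[\mathcal{B}_x\mathcal{B}_0]=\E\big[(Z(s,y)^2-1)\,\mathcal{B}_x\mathcal{B}_0\big],
\]
and by H\"older
\[
\big|\E[\mathcal{X}]-\E[\mathcal{B}_x\mathcal{B}_0]\big|\le \Norm{Z(s,y)^2-1}_2\,\Norm{\mathcal{B}_x}_4\,\Norm{\mathcal{B}_0}_4 .
\]
The estimate then splits into two ingredients.

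\emph{(a) Smallness of $Z(s,y)^2-1$ near time zero.} Since $Z(s,y)-1=\int_0^s\int_\R p_{s-r}(y-z)Z(r,z)\,\xi(\ud r,\ud z)$, the Burkholder--Davis--Gundy inequality together with the uniform moment bounds of Proposition~\ref{P:Z-moments} gives
\[
\Norm{Z(s,y)-1}_k^2\le C_k\int_0^s\!\!\int_\R p_{s-r}(y-z)^2\,\ud z\,\ud r\;\sup_{r\le 1,\,z}\Norm{Z(r,z)}_k^2\le C\int_0^s (s-r)^{-1/2}\,\ud r= C s^{1/2},
\]
uniformly in $y$ by stationarity. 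Hence $\Norm{Z(s,y)-1}_k\le C s^{1/4}$. Writing $Z^2-1=(Z-1)(Z+1)$ and applying Cauchy--Schwarz, we get $\Norm{Z(s,y)^2-1}_2\le C s^{1/4}$ for $s\le 1$.

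\emph{(b) Uniform $L^4$ bounds on $\mathcal{B}$.} By conditional Jensen for the partial expectation $\E_{s,t}$,
\[
|\mathcal{B}_x|^4\le \E_{s,t}\Big[\bar{\mathcal{G}}(t,x;s,y)^4\,Z(t,x)^{-4}\Big],
\]
so $\Norm{\mathcal{B}_x}_4\le \Norm{\bar{\mathcal{G}}(t,x;s,y)}_8\,\Norm{Z(t,x)^{-1}}_8$ by Cauchy--Schwarz. Both factors are bounded uniformly over $s\in(0,t]$ and $x,y\in\R$: the first by Lemma~\ref{L:barG-moments}, the second by the negative-moment part of Proposition~\ref{P:Z-moments}. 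The same bound holds for $\mathcal{B}_0$.

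Combining (a) and (b) gives \eqref{E:flat-key-assump-Z} with $C$ depending only on $t$. The one point requiring care, which I expect to be the main (though mild) obstacle, is confirming that $\E_{s,t}$ is a genuine conditional expectation, so that Jensen applies. Here $\E_{s,t}$ integrates only the noise on $(s,t]$, while $Z(t,x)$ depends on both $\mathcal{F}_s$ and the future noise; $\mathcal{B}_x$ is therefore an $\mathcal{F}_s$-measurable random variable equal to $\E[\,\cdot\mid\mathcal{F}_s]$, by independence of the noise increments. Jensen and the tower property thus apply, and no finer structure of $\mathcal{B}$ is needed. The rate $s^{1/4}$ is exactly the $L^k$ modulus of $Z(s,\cdot)-1$ at small times.
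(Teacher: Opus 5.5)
Your proposal is correct and matches the paper's proof essentially step for step. It uses the factorization $(Z(s,y)^2-1)\mathcal{B}_x\mathcal{B}_0$ and H\"older with exponents $2,4,4$. The first factor is handled by the small-time bound of Lemma~\ref{L:Zflat-small}, and the $L^4$ norms of $\mathcal{B}$ by conditional Jensen plus H\"older with Lemma~\ref{L:barG-moments} and Proposition~\ref{P:Z-moments}. Your observation that $\E_{s,t}$ is a version of $\E[\,\cdot\mid\mathcal{F}_s]$ is the right justification for the Jensen step, which the paper leaves implicit.
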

\begin{proof}
	By \eqref{E:Xflat-Bflat},
	\[
		\mathcal{X}(t,s,x,y)
		-\mathcal{B}(t,x;s,y)\mathcal{B}(t,0;s,y)
		=\big(Z(s,y)^2-1\big)\,
		\mathcal{B}(t,x;s,y)\mathcal{B}(t,0;s,y).
	\]
	Use H\"older with exponents $2,4,4$ to bound the $L^1$ norm by
	$\Norm{Z(s,y)^2-1}_2$ times the $L^4$ norms of the two $\mathcal{B}$ factors.
	The first term is $O(s^{1/4})$ by Lemma~\ref{L:Zflat-small} and
	Proposition~\ref{P:Z-moments}. For the $\mathcal{B}$ moments, apply H\"older
	and use Lemma~\ref{L:barG-moments} together with Proposition~\ref{P:Z-moments}
	to bound the negative moments of $Z(t,x)$ uniformly in $x$. This
	yields~\eqref{E:flat-key-assump-Z}.
\end{proof}

\section{Moment and inverse-moment bounds}\label{S:moments}

This section is devoted to the study of the moments and inverse moments of the
solution $Z$ to \eqref{E:she} and of the corresponding Green's function
$\mathcal{G}$. In particular, we require uniform (in space) bounds for the
moments and inverse moments in order to justify that the random normalizations
in the covariance integrand can be replaced by their deterministic counterparts
in the $|x| \to \infty$ asymptotics. Moreover, we derive a closed-form formula
for the second moment of the normalized Green's function $\bar{\mathcal{G}}$
given in~\eqref{E:barG}.

\begin{remark}
	Indeed, $\bar{\calG}_\beta$ satisfies the following SPDE:
	\begin{align}\label{E:barG-SPDE}
		\begin{dcases}
			\partial_t \bar{\calG}_\beta(t,x;s,y) =\tfrac12 \partial_x^2 \bar{\calG}_\beta(t,x;s,y) - \frac{x-y}{t-s} \,  \partial_x \bar{\calG}_\beta(t,x;s,y) + \beta\,\bar{\calG}_\beta(t,x;s,y)\,\xi(t,x), \\
			\bar{\calG}_\beta(s,x;s,y)=1,
		\end{dcases}
	\end{align}
	for $t > s \ge 0$ and $x, y \in \R$; see, e.g.,
	\cite{karatzas.shreve:91:brownian}*{(6.21 on p.~358}
	or~\cite{revuz.yor:99:continuous}*{Exercise (2.12) on p.~384}. This fact will not
	be used though. We will only need the mild formulation in~\eqref{E:barG-mild}
	to derive the moment formulas and bounds.
\end{remark}

\paragraph{Continuum polymer.} The Green's function $\calG_\beta(t,x;s,y)$ is the
point-to-point partition function of the continuum directed random polymer
(CDRP)~\cite{alberts.khanin.ea:14:continuum} from $(s,y)$ to $(t,x)$;
see~\cite{chen.ouyang.ea:26:class} for a recent generalization to~$\R^d$. The
normalized Green's function $\bar{\calG}_\beta$ removes the deterministic
heat-kernel singularity: it admits a continuous extension to $t=s$ with value
$1$, satisfies an SPDE driven by the Brownian-bridge generator, and encodes the
free-energy fluctuations of the polymer endpoint. The systematic study of
$\bar{\calG}_\beta$ as a random field jointly in all parameters was initiated by
Alberts, Janjigian, Rassoul-Agha, and
Sepp\"al\"ainen~\cite{alberts.janjigian.ea:22:greens}, who established joint
continuity, total positivity, and uniform $L^p$ bounds of the form
$\Norm{\bar{\calG}_\beta(t,x;s,y)}_{L^{2p}} \le C_{\beta,p}$;
however, their bounds do not provide explicit dependence on $\beta$, $\tau$, or
the spatial covariance, and no exact moment formula was derived.

\begin{proof}[Proof of Theorem~\ref{thm:barG-second-moment-intro}]
	Note that $\E[\bar{\mathcal{G}}(t,x;s,y)^2]$ depends only on $\tau=t-s$ by \cite{alberts.janjigian.ea:22:greens}*{Proposition~2.3}. Hence, we write $m_2(\tau) \coloneqq
		\E[\bar{\mathcal{G}}(\tau,x;0,0)^2]$. Firstly, using \eqref{E:barG-mild} at
	$(s,y)=(0,0)$ and $(t,x)=(\tau,0)$,
	\begin{equation*}
		\bar{\mathcal{G}}_\beta(\tau,0;0,0)=1
		+\beta\int_0^\tau\int_{\R}
		q_u^{0,\tau}(z)\,
		\bar{\mathcal{G}}_\beta(u,z;0,0)
		\,W(\ud u,\ud z),
	\end{equation*}
	where $q_u^{0,\tau}(z)$ is given in~\eqref{E:bridge-kernel-Rd}. Taking second
	moment, by It\^o isometry and shear invariance, we have
	\begin{equation}
		m_2(\tau)=1+\beta^2\int_0^\tau m_2(u)
		\left(\int_{\R}
		\left(q_u^{0,\tau}(z)\right)^2\,\ud z\right)\ud u.
	\end{equation}
	Now
	\begin{equation}
		\int_{\R}\frac{p_{\tau-u}(z)^2p_u(z)^2}{p_\tau(0)^2}\,\ud z
		=\frac{\tau}{2\pi u(\tau-u)}\int_{\R}
		e^{-z^2\tau/(u(\tau-u))}\,\ud z
		=\frac{\sqrt{\tau}}{2\sqrt{\pi}\sqrt{u(\tau-u)}}.
	\end{equation}
	Therefore, \eqref{E:volterra-1d-intro} holds. Next, define
	\begin{equation}
		a_0(\tau)=1,
		\qquad
		a_{n+1}(\tau)\coloneqq \frac{\beta^2\sqrt{\tau}}{2\sqrt{\pi}}
		\int_0^\tau\frac{a_n(u)}{\sqrt{u(\tau-u)}}\,\ud u, \quad n \ge 0.
	\end{equation}
	Then $m_2(\tau)=\sum_{n\ge 0}a_n(\tau)$ by Picard iteration for
	\eqref{E:volterra-1d-intro}. Here $B(a,b) \coloneqq  \int_0^1 t^{a-1}(1-t)^{b-1}\,\ud t =
		\Gamma(a)\Gamma(b)/\Gamma(a+b)$ denotes the Euler beta function; see
	\cite{olver.lozier.ea:10:nist}*{\S5.12}. The first terms are
	\begin{equation*}
		a_1(\tau)=\frac{\beta^2\sqrt{\tau}}{2\sqrt{\pi}}B\!\left(\frac12,\frac12\right)
		=\frac{\sqrt{\pi\beta^4\tau}}{2},\quad
		a_2(\tau)=\frac{\beta^4\tau}{2}, \quad
		a_3(\tau)=\frac{\beta^6\tau^{3/2}}{4\sqrt{\pi}}B\!\left(\frac32,\frac12\right)
		=\frac{\sqrt{\pi}\,\beta^6\tau^{3/2}}{8}.
	\end{equation*}
	We claim that for all $n\ge 0$,
	\begin{equation}
		a_n(\tau)=c_n(\beta^4\tau)^{n/2}
		\quad \text{and} \quad
		c_n=\frac{\sqrt{\pi}}{2^n\Gamma\!\left(\frac{n+1}{2}\right)}.
	\end{equation}
	The cases $n=0,1,2,3$ are above. If $a_n(\tau)=c_n(\beta^4\tau)^{n/2}$, then
	\begin{equation}
		\begin{aligned}
			a_{n+1}(\tau)
			 & =\frac{\beta^2\sqrt{\tau}}{2\sqrt{\pi}}c_n
			\int_0^\tau (\beta^4 u)^{\frac{n}{2}}u^{-1/2}(\tau-u)^{-1/2}\,\ud u \\
			 & =\frac{c_n}{2\sqrt{\pi}}\,(\beta^4\tau)^{(n+1)/2}
			B\!\left(\frac{n+1}{2},\frac12\right)
			=\frac{c_n}{2}\frac{\Gamma((n+1)/2)}{\Gamma((n+2)/2)}\,(\beta^4\tau)^{(n+1)/2},
		\end{aligned}
	\end{equation}
	which yields
	$c_{n+1}=\frac{\sqrt{\pi}}{2^{n+1}\Gamma\!\left(\frac{n+2}{2}\right)}$. So the
	claim follows by induction. Finally, summing the series,
	\begin{equation*}
		\begin{aligned}
			m_2(\tau)
			=\sum_{n=0}^{\infty} \frac{\sqrt{\pi}}{2^n\Gamma\!\left(\frac{n+1}{2}\right)}\,(\beta^4\tau)^{n/2}
			=\sqrt{\pi}\sum_{n=0}^{\infty} \frac{(\beta^2\sqrt{\tau}/2)^n}{\Gamma\!\left(\frac{n}{2}+\frac12\right)}
			=\sqrt{\pi}\,\ML{1/2}{1/2}\!\left(\frac{\beta^2\sqrt{\tau}}{2}\right),
		\end{aligned}
	\end{equation*}
	where $\ML{\alpha}{\beta}(z)\coloneqq \sum_{n=0}^{\infty} \frac{z^n}{\Gamma(\alpha
			n+\beta)}$ is the two-parameter Mittag-Leffler
	function~\cite{podlubny:99:fractional}. Next, we can conclude the formula
	in~\eqref{E:2nd-Mom-1d-intro} from the following identities:
	\begin{equation*}
		\ML{1/2}{1/2}(z)=\frac{1}{\sqrt{\pi}}+z\ML{1/2}{1}(z), \quad
		\ML{1/2}{1}(z)=e^{z^2}\Erfc(-z), \quad \text{and} \quad
		\Erfc(-z)=2\Phi(z\sqrt{2}).
	\end{equation*}
	This completes the proof of Theorem~\ref{thm:barG-second-moment-intro}.
\end{proof}

\begin{remark}[Comparison with the parabolic Anderson model]
	Consider the parabolic Anderson model with flat initial condition, i.e.
	\[
		\partial_t Z_\beta(t,x)=\tfrac12\partial_x^2 Z_\beta(t,x)
		+ \beta Z_\beta(t,x)\,\xi(t,x),
		\qquad t>0,\ x\in\R,
	\]
	with $Z_\beta(0,x)=1$. Its mild form is given by
	\begin{equation}\label{E:mild-flat}
		Z_\beta(t,x) = 1 + \beta \int_0^t \int_{\R} p_{t-r}(x-z)\, Z_\beta(r,z) \, \xi(\ud r, \ud z).
	\end{equation}
	Comparing \eqref{E:mild-flat} with \eqref{E:barG-mild}, we see that the
	difference is to replace the Brownian bridge kernel $q_{r-s}^{x-y,t-s}(z-y)$
	by the Brownian kernel $p_{t-r}(x-z)$. The second moment of the solution
	to~\eqref{E:mild-flat} has been obtained explicitly
	in~\cite{chen.dalang:15:moments, bertini.cancrini.ea:94:stochastic} as
	follows:
	\begin{align}\label{E:2nd-Mom-1d-Z}
		\E\!\left[Z_\beta(t,x)^2\right]
		=2e^{\beta^4 t/4}\Phi\!\left(\beta^2\sqrt{\frac{t}{2}}\right)
		=e^{\beta^4 t/4}\Erfc\!\left(-\frac{\beta^2\sqrt{t}}{2}\right).
	\end{align}
	See Corollary~2.5 and Remark~2.6 of~\cite{chen.dalang:15:moments}. The
	Volterra integral equation corresponding to~\eqref{E:volterra-1d-intro} is
	\begin{align}\label{E:volterra-flat}
		g(\tau)=1+\frac{\beta^2}{2\sqrt{\pi}}\int_0^\tau\frac{g(s)}{\sqrt{\tau-s}}\,\ud s,
		\qquad g(\tau)\coloneqq \E\!\left[Z_\beta(\tau,x)^2\right].
	\end{align}
	Therefore, the two kernels satisfy
	\[
		K_{\mathrm{bridge}}(t,s) \coloneqq \frac{\beta^2}{2\sqrt{\pi}}
		\frac{\sqrt{t/s}}{\sqrt{t-s}}
		\ge \frac{\beta^2}{2\sqrt{\pi}}\frac{1}{\sqrt{t-s}} \eqqcolon K_{\mathrm{flat}}(t,s), \quad 0<s<t.
	\]
	Using monotone Picard iteration with the same initial iterate $1$, this gives
	$m_2(t)\ge g(t)$ for all $t \ge 0$. This inequality is strict for $t>0$ when
	$\beta\neq 0$. At first order this is already visible:
	\[
		m_2(t)=1+\frac{\sqrt{\pi}}{2}\beta^2\sqrt{t}+O(\beta^4 t),
		\qquad
		g(t)=1+\frac{1}{\sqrt{\pi}}\beta^2\sqrt{t}+O(\beta^4 t),
	\]
	so the bridge-normalized second moment has a larger growth coefficient.
	Intuitively, this happens because dividing by the heat kernel changes the
	propagation from Brownian motion to Brownian bridge. The factor $\sqrt{t/s}$
	upweights early times $s\downarrow 0$, so the normalized Green's function
	accumulates noise more strongly than the flat mild solution.
\end{remark}

Recall that $Z$ is the solution of \eqref{E:mild-flat} when $\beta =1$.

\begin{lemma}[Uniform moments of $\bar{\mathcal{G}}$]\label{L:barG-moments}
	Fix $T>0$ and $k\in\R$. There exists $C=C(T,k)<\infty$ such that
	\begin{equation}\label{E:barG-moments}
		\sup_{\substack{0\le s<t\le T\\ x,y\in\R}}
		\E\!\left[\bar{\mathcal{G}}(t,x;s,y)^k\right]\le C.
	\end{equation}
\end{lemma}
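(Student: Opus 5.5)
Since $\bar{\mathcal{G}}(t,x;s,y)>0$ almost surely, $\E[\bar{\mathcal G}^k]$ is monotone-log-convex in $k$. We treat $k\ge 0$ and $k<0$ separately, and for each only large $|k|$; intermediate $k$ then follow from Jensen or H\"older.

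\textbf{Reduction of parameters.} By \cite{alberts.janjigian.ea:22:greens}*{Proposition~2.3} (shear and translation invariance, as used in the proof of Theorem~\ref{thm:barG-second-moment-intro}), the law of $\bar{\mathcal{G}}(t,x;s,y)$ depends only on $\tau=t-s\in(0,T]$. So it suffices to bound $\sup_{\tau\le T}\E[\bar{\mathcal G}(\tau,0;0,0)^k]$.

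\textbf{Positive moments.} For $k=2p\ge 2$ we apply the Burkholder--Davis--Gundy inequality to the mild form \eqref{E:barG-mild} and then Minkowski's inequality:
\[
\Norm{\bar{\mathcal G}(\tau,0;0,0)}_{2p}^2\le 2+8p\int_0^\tau\int_{\R}\big(q_u^{0,\tau}(z)\big)^2\Norm{\bar{\mathcal G}(u,z;0,0)}_{2p}^2\,\ud z\,\ud u .
\]
Stationarity in $z$ (shear invariance again) lets us pull out $f_p(u)\coloneqq\Norm{\bar{\mathcal G}(u,\cdot;0,0)}_{2p}^2$. The spatial integral was computed in the proof of Theorem~\ref{thm:barG-second-moment-intro}, giving the Volterra inequality
\[
f_p(\tau)\le 2+\frac{8p\sqrt{\tau}}{2\sqrt\pi}\int_0^\tau\frac{f_p(u)}{\sqrt{u(\tau-u)}}\,\ud u .
\]
Comparison with the Picard series in that proof, now with $\beta^2$ replaced by $8p$, bounds $f_p(\tau)$ by $2\sqrt{\pi}\,\ML{1/2}{1/2}(4p\sqrt\tau)$. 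This is finite and increasing in $\tau$, which handles all $k\ge0$. An alternative is to cite the $L^{2p}$ bounds of \cite{alberts.janjigian.ea:22:greens} directly.

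\textbf{Negative moments.} These are the main obstacle, since the mild form gives no direct control from below. My first approach is the polymer (Gibbs) representation
\[
\bar{\mathcal G}(\tau,0;0,0)=\E_{B}\Big[\,{:}\exp{:}\Big(\int_0^\tau\xi(u,B_u)\,\ud u\Big)\Big],
\]
where the expectation is over a Brownian bridge $B$ from $(0,0)$ to $(\tau,0)$, understood as the chaos series. The idea is to use concentration for $\log\bar{\mathcal G}$. By Clark--Ocone, as in \eqref{E:clark-ocone},
\[
D_{u,z}\log\bar{\mathcal G}=\E^{Q}\big[\delta(B_u-z)\big],
\]
the quenched polymer density at time $u$. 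To obtain Gaussian tails for $\log\bar{\mathcal G}$, we need exponential moments of $\int_0^\tau\int_{\R}(D\log\bar{\mathcal G})^2\,\ud z\,\ud u$, which is the overlap $\int_0^\tau \E^{Q^{\otimes2}}[\delta(B_u-B'_u)]\,\ud u$.

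This is delicate, so the fallback I would actually implement follows Mueller's comparison argument as used in \cite{chen.khoshnevisan.ea:23:central}. The previous step gives the upper bound $\E[\bar{\mathcal G}^2]\le C$. Together with $\E\bar{\mathcal G}=1$, the Paley--Zygmund inequality yields $\P(\bar{\mathcal G}\ge \tfrac12)\ge c>0$ uniformly in $\tau\le T$. This is then bootstrapped into a lower tail bound $\P(\bar{\mathcal G}<\varepsilon)\le C\exp(-c|\log\varepsilon|^{3/2})$ by the Moreno-Flores/Mueller small-ball method: split $[0,\tau]$ into $N$ short time steps, and on each step use positivity, the Markov property of $\mathcal G$ in its Chapman--Kolmogorov form, and the lower bound on a large-probability event.

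The uniformity as $\tau\to0$ is automatic, because $\bar{\mathcal G}\to1$ in every $L^p$ as $\tau\to0$. Indeed the variance equals $m_2(\tau)-1=O(\sqrt\tau)$ by Theorem~\ref{thm:barG-second-moment-intro}. Finally, integrating the tail bound gives $\sup_{\tau\le T}\E[\bar{\mathcal G}^{-p}]<\infty$ for every $p>0$.
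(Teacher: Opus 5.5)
Your reduction to $\tau=t-s$ via translation and shear invariance is the same as the paper's. The paper then simply cites \cite{alberts.janjigian.ea:22:greens}*{Lemma~3.2} for all $k\in\R$. Your BDG/Volterra argument is a valid self-contained substitute for $k>0$, provided you run it on the Picard iterates and pass to the limit with Fatou, so that finiteness of $f_p$ is not assumed in advance. The genuine gap is in the negative moments, and two steps there fail.

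\textbf{The bootstrap.} The bound $\P(\bar{\mathcal G}\ge\tfrac12)\ge c$ cannot be bootstrapped by the time-stepping you describe. In the Mueller/Moreno-Flores scheme, each of the $N$ short steps must succeed with probability exponentially close to $1$ in a negative power of the step length. That comes from a short-time large-deviation bound for the truncated stochastic convolution against the heat flow of a profile bounded below on an interval, and the $N$ failure probabilities are then summed. If each step succeeds with probability only $c$, you can guarantee only that all $N$ steps succeed with probability of order $c^N$, which points the wrong way. Nothing in your sketch produces the ``large-probability event'' you invoke. The delta initial datum at $(0,0)$, where that scheme does not apply directly, is also not dealt with.

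\textbf{Uniformity as $\tau\to0$.} The claim that this is ``automatic'' is false. Convergence $\bar{\mathcal G}\to1$ in every $L^p$ with $p>0$ gives no control of $\E[\bar{\mathcal G}^{-p}]$. For example, a variable equal to $e^{-n}$ with probability $1/n$ and to $1$ otherwise tends to $1$ in every $L^p$, while its inverse moment diverges. The lower-tail bound itself must be uniform in $\tau\le T$. By scaling, $\bar{\mathcal G}(\tau,0;0,0)\overset{d}{=}\bar{\mathcal G}_{\tau^{1/4}}(1,0;0,0)$, so this amounts to uniformity in $\beta\le T^{1/4}$.

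Besides citing a negative-moment result as the paper does, your Paley--Zygmund input can be made to work through convexity plus Gaussian isoperimetry. The map $\omega\mapsto\log\bar{\mathcal G}(\tau,0;0,0)$ is convex along Cameron--Martin directions. It is the log of a polymer average of exponentials of linear functionals, and this is rigorous after mollifying the noise. Its gradient is $D_{u,z}\log\bar{\mathcal G}=q_u^{0,\tau}(z)\,\bar{\mathcal G}(\tau,0;u,z)\,\bar{\mathcal G}(u,z;0,0)/\bar{\mathcal G}(\tau,0;0,0)$. On $\{\bar{\mathcal G}\ge\tfrac12\}$ its squared $L^2$ norm is at most $4\int_0^\tau\!\int_{\R}(q_u^{0,\tau}(z))^2\,\bar{\mathcal G}(\tau,0;u,z)^2\,\bar{\mathcal G}(u,z;0,0)^2\,\ud z\,\ud u$. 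By independence and Theorem~\ref{thm:barG-second-moment-intro}, the mean of this is $O(\sqrt\tau)$. So $A=\{\bar{\mathcal G}\ge\tfrac12,\ \Norm{D\log\bar{\mathcal G}}_{L^2}^2\le L\}$ has probability at least $c/2$ uniformly in $\tau\le T$. Talagrand's inequality $\P(A)\,\P(d_H(\cdot,A)\ge r)\le e^{-r^2/4}$, with $d_H$ the Cameron--Martin distance, then gives $\P(\log\bar{\mathcal G}<-\log2-\sqrt L\,r)\le\P(A)^{-1}e^{-r^2/4}$. All negative moments follow, uniformly in $\tau\le T$. This needs only a bounded overlap on an event of positive probability, not the exponential moments of the overlap that made your first approach delicate.
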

\begin{proof}
	By translation invariance of the white noise, $\bar{\mathcal{G}}(t,x;s,y)$ has
	the same law as $\bar{\mathcal{G}}(t-s,x-y;0,0)$. Thus \eqref{E:barG-moments}
	reduces to uniform bounds on the moments (including negative moments) of
	$\bar{\mathcal{G}}(\tau,z;0,0)$ for $\tau\in(0,T]$ and $z\in\R$. By the shear
	invariance of the normalized Green's function (i.e., stationarity in the
	spatial variable), its law does not depend on $z$, so it suffices to consider
	$z=0$. Such bounds are standard for the one-dimensional SHE Green's function;
	see, for instance, \cite{alberts.janjigian.ea:22:greens}*{Lemma~3.2}.
\end{proof}

\begin{lemma}[Green kernel representation]\label{L:Z-green-repr}
	For flat initial data $u_0$, the mild solution to \eqref{E:she} satisfies, for
	every $t>0$ and $x\in\R$,
	\begin{equation}\label{E:Z-green-repr}
		Z(t,x)=\int_{\R}\mathcal{G}(t,x;0,z) \,\ud z.
	\end{equation}
\end{lemma}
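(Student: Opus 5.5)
The plan is to show that the random field $V(t,x)\coloneqq\int_{\R}\mathcal{G}(t,x;0,z)\,\ud z$ is well defined and satisfies the mild equation \eqref{E:mild-flat} with $\beta=1$. The identity \eqref{E:Z-green-repr} then follows from uniqueness of mild solutions in the class of predictable random fields with $\sup_{t\le T,x\in\R}\E[|Z(t,x)|^2]<\infty$.

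\textbf{Step 1: $V$ is well defined with uniform second moments.} Write $\mathcal{G}(t,x;0,z)=p_t(x-z)\bar{\mathcal{G}}(t,x;0,z)$. By Minkowski's inequality in $L^2(\Omega)$ and Lemma~\ref{L:barG-moments} with $k=2$,
\[
\Norm{V(t,x)}_2\le\int_{\R}p_t(x-z)\Norm{\bar{\mathcal{G}}(t,x;0,z)}_2\,\ud z\le C_T^{1/2}.
\]
Since $\mathcal{G}\ge0$, the $z$-integral also converges absolutely almost surely. Joint measurability and adaptedness of $V$ follow from those of $\mathcal{G}$ (for instance via the joint continuity proved in \cite{alberts.janjigian.ea:22:greens}), and $\mathcal{G}(t,x;0,z)$ is $\mathcal{F}_t$-measurable.

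\textbf{Step 2: $V$ solves the mild equation.} The mild form of \eqref{E:G} with $\beta=1$ reads
\[
\mathcal{G}(t,x;0,z)=p_t(x-z)+\int_0^t\int_{\R}p_{t-r}(x-w)\,\mathcal{G}(r,w;0,z)\,\xi(\ud r,\ud w).
\]
This is equivalent to \eqref{E:barG-mild}, because multiplying \eqref{E:barG-mild} by $p_t(x-z)$ turns the bridge kernel $q$ back into $p_{t-r}(x-w)p_r(w-z)$. Integrate in $z$ and use $\int_{\R}p_t(x-z)\,\ud z=1$. The only nontrivial point is the stochastic Fubini theorem, which exchanges $\int\ud z$ with the Walsh integral. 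It requires
\[
\int_{\R}\Big(\int_0^t\int_{\R}p_{t-r}(x-w)^2\,\E\big[\mathcal{G}(r,w;0,z)^2\big]\,\ud w\,\ud r\Big)^{1/2}\ud z<\infty.
\]
By Lemma~\ref{L:barG-moments}, the inner expectation is at most $C\,p_r(w-z)^2$. Using $p_r(w-z)^2=(4\pi r)^{-1/2}p_{r/2}(w-z)$ and the semigroup property, the inner integral is bounded by
\[
C\int_0^t r^{-1/2}(t-r)^{-1/2}\,p_{t-r/2}(x-z)\,\ud r\le C_t\,p_{2t}(x-z)\int_0^t r^{-1/2}(t-r)^{-1/2}\,\ud r,
\]
which equals $C_t'\,p_{2t}(x-z)$. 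Its square root is bounded by $C\,e^{-(x-z)^2/(8t)}$, which is integrable in $z$. After the exchange, the stochastic Fubini theorem gives
\[
\int_{\R}\mathcal{G}(r,w;0,z)\,\ud z=V(r,w),
\]
so $V$ satisfies \eqref{E:mild-flat}.

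\textbf{Step 3: uniqueness.} Set $D=V-Z$. It satisfies
\[
\E[D(t,x)^2]=\int_0^t\int_{\R}p_{t-r}(x-w)^2\,\E[D(r,w)^2]\,\ud w\,\ud r.
\]
Hence $f(t)\coloneqq\sup_x\E[D(t,x)^2]$, which is bounded on $[0,T]$ by Step 1 and Proposition~\ref{P:Z-moments}, obeys $f(t)\le c\int_0^t(t-r)^{-1/2}f(r)\,\ud r$. Iterating this inequality once yields a bounded kernel, and a Gronwall argument then gives $f\equiv0$. Therefore $V(t,x)=Z(t,x)$ almost surely for each $(t,x)$, which is \eqref{E:Z-green-repr}.

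I expect the main obstacle to be Step 2: the integrability condition and the predictability that justify the stochastic Fubini exchange over the infinite $z$-domain. The Gaussian domination $p_{2t}(x-z)$ obtained from Lemma~\ref{L:barG-moments} is what handles it.
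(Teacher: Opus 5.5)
Your proposal is correct and follows the same route as the paper: integrate the mild equation for $\mathcal{G}(t,x;0,z)$ in $z$, exchange the $\ud z$-integral with the Walsh integral by stochastic Fubini, and conclude by uniqueness of mild solutions. Your Gaussian domination by $C_t\,p_{2t}(x-z)$ and your Gronwall iteration supply details the paper only sketches.

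One small fix: in Step~3, take $\sup_{t\le T,\,x\in\R}\E[Z(t,x)^2]<\infty$ from the standard well-posedness theory (or from \eqref{E:2nd-Mom-1d-Z}) rather than from Proposition~\ref{P:Z-moments}, since the paper proves that proposition using this very lemma.
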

\begin{proof}
	For $t>0$ and $x\in\R$, set $\Phi(t,x)\coloneqq \int_{\R}\mathcal{G}(t,x;0,z)\,\ud z$.
	Using the mild formulation of \eqref{E:barG-mild} (with $s=0$),
	\[
		\mathcal{G}(t,x;0,z)
		=p_t(x-z)+\int_0^t\int_{\R} p_{t-r}(x-w)\,\mathcal{G}(r,w;0,z)\,\xi(\ud r\,\ud w).
	\]
	Integrating in $z$ and using $\int_{\R}p_t(x-z)\,\ud z=1$, together with
	stochastic Fubini (justified, e.g., by truncating the $z$-integral and
	applying It\^o's isometry), yields
	\[
		\Phi(t,x)
		=1+\int_0^t\int_{\R} p_{t-r}(x-w)\,\Phi(r,w)\,\xi(\ud r\,\ud w).
	\]
	This is precisely the mild formulation of \eqref{E:she} with flat initial data
	$u_0\equiv 1$. By uniqueness of mild solutions, $\Phi(t,x)=Z(t,x)$, proving
	\eqref{E:Z-green-repr}.
\end{proof}

\begin{proposition}[Uniform moments of $ Z$]\label{P:Z-moments}
	Fix $T>0$, $p\ge 1$ and $q>0$. Then
	\begin{equation}\label{E:Ztilde-moments}
		\sup_{\substack{0<t\le T\\ x\in\R}}
		\E\!\left[ Z(t,x)^p+ Z(t,x)^{-q}\right]<\infty.
	\end{equation}
\end{proposition}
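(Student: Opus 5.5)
First, spatial stationarity: for flat data the law of $Z(t,\cdot)$ is invariant under spatial shifts, since $\xi(\cdot,\cdot+a)$ is again a space--time white noise and $u_0\equiv 1$ is shift invariant. Hence $\E[Z(t,x)^p]$ and $\E[Z(t,x)^{-q}]$ do not depend on $x$, and it suffices to bound $\sup_{0<t\le T}$ of both at $x=0$.

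\emph{Positive moments.} For $p\ge 2$ I would apply the Burkholder--Davis--Gundy inequality to the mild formulation \eqref{E:mild-flat} (with $\beta=1$). Set $f_p(t)\coloneqq\sup_x\Norm{Z(t,x)}_p^2$. Then, using $\int_{\R}p_{t-r}(x-z)^2\,\ud z=(4\pi(t-r))^{-1/2}$,
\[
	f_p(t)\le 2+8p\int_0^t\frac{f_p(r)}{\sqrt{4\pi(t-r)}}\,\ud r .
\]
This is a Volterra inequality with the same $(t-r)^{-1/2}$ kernel as \eqref{E:volterra-flat}. Comparing with its explicit solution (a Mittag-Leffler function, or by Picard iteration) gives $f_p(t)\le C e^{Cp^2t}$, which is bounded on $[0,T]$. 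The case $p\in[1,2)$ then follows from Jensen's inequality. Alternatively, one can cite the standard moment bounds of Chen--Dalang~\cite{chen.dalang:15:moments}.

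\emph{Negative moments.} This is the main obstacle, since the mild formula gives no lower bound on $Z$. I would use Lemma~\ref{L:Z-green-repr} and Jensen's inequality for the probability measure $p_t(z)\,\ud z$. Writing $\mathcal{G}(t,0;0,z)=p_t(z)\bar{\mathcal{G}}(t,0;0,z)$, we get
\[
	Z(t,0)=\int_{\R}p_t(z)\,\bar{\mathcal{G}}(t,0;0,z)\,\ud z .
\]
Since $u\mapsto u^{-q}$ is convex on $(0,\infty)$, this gives
\[
	Z(t,0)^{-q}\le\int_{\R}p_t(z)\,\bar{\mathcal{G}}(t,0;0,z)^{-q}\,\ud z .
\]
Taking expectations, Tonelli and Lemma~\ref{L:barG-moments} with $k=-q$ bound the right side by $C(T,-q)\int_{\R} p_t(z)\,\ud z=C(T,-q)$, uniformly in $t\in(0,T]$. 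The positivity of $\bar{\mathcal{G}}$ needed here is implicit in Lemma~\ref{L:barG-moments}, whose negative moments are finite.

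This reduces the inverse-moment bound entirely to the uniform negative moments of the normalized Green's function, which were established in \cite{alberts.janjigian.ea:22:greens}. So the only delicate input is Lemma~\ref{L:barG-moments} itself, which is taken as given. If one wanted to avoid it, the fallback would be the Mueller/Moreno-Flores small-ball estimate $\P(Z(t,0)\le\varepsilon)\le C\exp(-c|\log\varepsilon|^{3/2})$, uniformly for $t\le T$, integrated against $q\varepsilon^{-q-1}$. I expect checking that this estimate holds uniformly as $t\downarrow 0$ to be the hardest part of that fallback route.
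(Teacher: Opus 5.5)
Your argument is correct. For the negative moments it is exactly the paper's proof. The paper also writes $Z(t,x)=\int_{\R}\bar{\mathcal{G}}(t,x;0,z)\,p_t(x-z)\,\ud z$ via Lemma~\ref{L:Z-green-repr}, applies Jensen to the convex map $r\mapsto r^{-q}$, and invokes Lemma~\ref{L:barG-moments}. The only difference there is that the paper keeps a general $x$ instead of reducing to $x=0$ by stationarity, which changes nothing.

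Where you genuinely differ is the positive moments. The paper does not use BDG there. It runs the same Jensen step with the convex map $r\mapsto r^{p}$, $p\ge 1$, and Lemma~\ref{L:barG-moments} with $k=p$, so both halves of the proposition come from one two-line argument. Your BDG/Volterra route is self-contained and gives the explicit growth $\sup_x\Norm{Z(t,x)}_p^2\le Ce^{Cp^2t}$ without using any moments of $\bar{\mathcal{G}}$. However, it does not reduce the external input: the negative moments of $\bar{\mathcal{G}}$ must be imported from \cite{alberts.janjigian.ea:22:greens} anyway, and that same lemma covers all real $k$.

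One small technical point on your route. The Volterra comparison needs $f_p$ to be locally bounded a priori, and for the solution itself that is essentially what you are trying to prove. The clean fix is to run the estimate on the Picard iterates, where the bound propagates by induction with a constant independent of the iteration index. Then pass to the limit using Fatou.
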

\begin{proof}
	By Lemma~\ref{L:Z-green-repr}, we have \eqref{E:Z-green-repr}. Combined with the definition of $\bar{\mathcal{G}}$, we obtain
	\[
		Z(t,x)
		=\int_{\R}\bar{\mathcal{G}}(t,x;0,z)\,\nu_{t,x}(\ud z),
	\]
	where $\nu_{t,x}$ is the probability measure
	$
		\nu_{t,x}(\ud z)\coloneqq p_t(x-z)\,\ud z.
	$
	By Jensen, for $p\ge 1$,
	\[
		Z(t,x)^p\le \int_{\R} \bar{\mathcal{G}}(t,x;0,z)^p\,\nu_{t,x}(\ud z),
	\]
	and for $q>0$ (since $r\mapsto r^{-q}$ is convex on $(0,\infty)$),
	\[
		Z(t,x)^{-q}\le \int_{\R} \bar{\mathcal{G}}(t,x;0,z)^{-q}\,\nu_{t,x}(\ud z).
	\]
	Taking expectations and using Lemma~\ref{L:barG-moments} gives
	\eqref{E:Ztilde-moments}.
\end{proof}

\begin{remark}[{The constant $\mathbb{E}[Z(t,0)^{-1}\mathclose]$}]
	\label{R:inverse-moment}
	Proposition~\ref{P:Z-moments} guarantees that $\mathbb{E}[Z(t,0)^{-1}]$ is finite for each $t>0$.
	By the martingale property of $Z(t,\cdot)$ one has $\mathbb{E}[Z(t,0)]=1$, and Jensen's inequality applied to the convex function $x\mapsto 1/x$ on $(0,\infty)$ yields
	\[
		\mathbb{E}[Z(t,0)^{-1}] \ge \frac{1}{\mathbb{E}[Z(t,0)]}=1.
	\]
	Equality would hold only if $Z(t,0)$ were deterministic, which is not the case for the SHE solution.  Hence $\mathbb{E}[Z(t,0)^{-1}]>1$, and consequently the boundary-layer constant $\kappa(t)=\bigl(\mathbb{E}[Z(t,0)^{-1}]\bigr)^2$ is strictly larger than~$1$.

	Quantitative bounds can be obtained from the moment estimates of Proposition~\ref{P:Z-moments}.  For any $q>1$, H\"older's inequality gives
	\[
		\mathbb{E}[Z(t,0)^{-1}] \le \bigl(\mathbb{E}[Z(t,0)^{-q}]\bigr)^{1/q},
	\]
	where the right-hand side is finite and bounded uniformly for $t$ in compact intervals.
	For small times, Lemma~\ref{L:Zflat-small} yields $\Norm{Z(t,0)-1}_k\le C t^{1/4}$; together with the expansion $(1+x)^{-1}=1-x+x^2+O(x^3)$ one finds
	\[
		\mathbb{E}[Z(t,0)^{-1}] = 1 + \mathbb{E}\bigl[(Z(t,0)-1)^2\bigr] + O(t^{3/4}),
	\]
	and $\mathbb{E}[(Z(t,0)-1)^2]\le C t^{1/2}$ by the lemma.  Hence $\mathbb{E}[Z(t,0)^{-1}]=1+O(t^{1/2})$ as $t\to0$.

	The exact value of $\mathbb{E}[Z(t,0)^{-1}]$ for general $t$ appears unknown.  If $\log Z(t,0)$ were Gaussian with variance $\sigma_t^2$, the condition $\mathbb{E}[Z(t,0)]=1$ would force $\mathbb{E}[Z(t,0)^{-1}]=\exp(\sigma_t^2)$.  Although $\log Z(t,0)$ is not exactly Gaussian, this heuristic suggests that $\mathbb{E}[Z(t,0)^{-1}]$ grows with the fluctuations of the solution.  Determining $\kappa(t)$ explicitly, or even its asymptotic behavior as $t\to\infty$, remains an interesting open problem.
\end{remark}

\begin{lemma}[Uniform moment of $\mathcal{X}$]\label{L:X-moment}
	Fix $T>0$. There exists $C=C(T)<\infty$ such that
	\begin{equation}\label{E:X-expect}
		\sup_{\substack{0<s<t\\ x,y\in\R}}
		\E\!\left[\mathcal{X}(t,s,x,y)\right] < \infty.
	\end{equation}
\end{lemma}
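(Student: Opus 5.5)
The plan is to bound $\E[\mathcal{X}(t,s,x,y)]$ by a product of $L^k$ norms, each controlled uniformly by Lemma~\ref{L:barG-moments} and Proposition~\ref{P:Z-moments}. Implicitly $0<s<t\le T$ here, since the constant is allowed to depend on $T$. Recall from \eqref{E:Xflat-Bflat} that $\mathcal{X}=Z(s,y)^2\,\mathcal{B}(t,x;s,y)\,\mathcal{B}(t,0;s,y)$, and that $\mathcal{X}\ge 0$ because all factors are positive. So it suffices to bound $\E[\mathcal{X}]$ from above.

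First I apply H\"older with exponents $3,3,3$:
\[
\E[\mathcal{X}]\le \Norm{Z(s,y)^2}_3\,\Norm{\mathcal{B}(t,x;s,y)}_3\,\Norm{\mathcal{B}(t,0;s,y)}_3 .
\]
The first factor equals $(\E[Z(s,y)^6])^{1/3}$. It is bounded uniformly over $s\in(0,T]$ and $y\in\R$ by Proposition~\ref{P:Z-moments}; at $s=0$ it equals $1$ trivially.

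For the $\mathcal{B}$ factors, recall that $\mathcal{B}(t,x;s,y)=\E_{s,t}[\bar{\mathcal{G}}(t,x;s,y)Z(t,x)^{-1}]$ is a partial expectation over the noise on $(s,t]$, with the noise on $[0,s]$ frozen. By independence of the noise on disjoint time slabs, this is the conditional expectation given $\mathcal{F}_s$. Conditional Jensen for $r\mapsto r^3$ then gives
\[
\E[\mathcal{B}^3]\le \E\big[\bar{\mathcal{G}}(t,x;s,y)^3 Z(t,x)^{-3}\big].
\]
Cauchy--Schwarz bounds the right side by $\Norm{\bar{\mathcal{G}}(t,x;s,y)}_6^3\,\Norm{Z(t,x)^{-1}}_6^3$. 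The first norm is bounded uniformly in $0\le s<t\le T$ and $x,y$ by Lemma~\ref{L:barG-moments} with $k=6$. The second is bounded uniformly in $t\in(0,T]$ and $x$ by Proposition~\ref{P:Z-moments} with $q=6$. The same argument applies to $\mathcal{B}(t,0;s,y)$. Multiplying the three bounds gives a constant $C(T)$ independent of $(s,x,y)$, which proves \eqref{E:X-expect}.

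The only point that needs care is the conditional Jensen step, i.e.\ making $\E_{s,t}$ rigorous as a conditional expectation given $\mathcal{F}_s$. The quantity $Z(t,x)$ depends on the noise on both $[0,s]$ and $(s,t]$, so I would write $Z(t,x)=\int\mathcal{G}(t,x;s,w)Z(s,w)\,\ud w$. This is the analogue of Lemma~\ref{L:Z-green-repr} at initial time $s$. The integrand is then a measurable function of the pair (past noise, future noise), and by independence the partial expectation over the future is a version of the conditional expectation. With that identification, everything else is a direct invocation of the earlier uniform moment bounds.
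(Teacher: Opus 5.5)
Your proposal is correct and follows the paper's proof essentially step for step. It uses H\"older with exponents $3,3,3$, then (conditional) Jensen plus H\"older to get $\Norm{\mathcal{B}(t,x;s,y)}_3\le\Norm{Z(t,x)^{-1}}_6\,\Norm{\bar{\mathcal{G}}(t,x;s,y)}_6$, and finally Proposition~\ref{P:Z-moments} and Lemma~\ref{L:barG-moments}. Your extra remarks are sensible clarifications of points the paper leaves implicit: that one should restrict to $t\le T$, and that $\E_{s,t}$ is a version of the conditional expectation given $\mathcal{F}_s$ (via the Markov property).
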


\begin{proof}
	This follows from H\"older's inequality:
	\[
		\big|\mathbb{E}\!\left[ \mathcal{X}(t,s,x,y) \right]\big|
		\le \Norm{Z(s,y)^2}_3 \, \Norm{\mathcal{B}(t,x;s,y)}_3 \, \Norm{\mathcal{B}(t,0;s,y)}_3.
	\]
	Using Jensen's inequality together with H\"older's inequality, we obtain
	\[
		\Norm{\mathcal{B}(t,x;s,y)}_3
		\le \Norm{Z(t,x)^{-1}\, \bar{\mathcal{G}}(t,x;s,y)}_3
		\le \Norm{Z(t,x)^{-1}}_6 \, \Norm{\bar{\mathcal{G}}(t,x;s,y)}_6.
	\]
	The claim now follows from Proposition~\ref{P:Z-moments} and Lemma~\ref{L:barG-moments}.
\end{proof}
We will invoke Lemma~\ref{L:X-moment} and \eqref{E:X-expect} when applying dominated convergence in the spatial decay argument.

\begin{lemma}[Small-time estimate for $Z$]\label{L:Zflat-small}
	Fix $k\ge2$ and $T>0$. There exists $C=C(k,T)<\infty$ such that for all
	$s\in(0,T]$ and all $y\in\R$,
	\begin{equation}\label{E:Zflat-small}
		\Norm{Z(s,y)-1}_k\le C\,s^{1/4}.
	\end{equation}
\end{lemma}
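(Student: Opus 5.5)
We work from the mild formulation \eqref{E:mild-flat} with $\beta=1$:
\[
Z(s,y)-1=\int_0^s\int_{\R}p_{s-r}(y-z)\,Z(r,z)\,\xi(\ud r,\ud z).
\]
The idea is to control this stochastic integral in $L^k(\Omega)$ with the Burkholder--Davis--Gundy inequality and then read off the factor $s^{1/4}$ from the heat-kernel $L^2$ norm.

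\textbf{Step 1 (BDG and Minkowski).} For $k\ge2$, BDG together with Minkowski's inequality for the $L^{k/2}(\Omega)$ norm of the quadratic variation gives
\[
\Norm{Z(s,y)-1}_k^2\le C_k\int_0^s\int_{\R}p_{s-r}(y-z)^2\,\Norm{Z(r,z)}_k^2\,\ud z\,\ud r .
\]

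\textbf{Step 2 (uniform moments).} By Proposition~\ref{P:Z-moments}, $M_T:=\sup_{0<r\le T,\,z\in\R}\Norm{Z(r,z)}_k<\infty$. The required bound holds at every $r\in(0,s]$ with $s\le T$, and the case $r=0$ is trivial since $Z(0,\cdot)=1$.

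\textbf{Step 3 (heat-kernel integral).} Using $\int_{\R}p_u(w)^2\,\ud w=p_{2u}(0)=(4\pi u)^{-1/2}$, we obtain
\[
\Norm{Z(s,y)-1}_k^2\le C_kM_T^2\int_0^s(4\pi(s-r))^{-1/2}\,\ud r=C_kM_T^2\,\pi^{-1/2}s^{1/2}.
\]
Taking square roots yields \eqref{E:Zflat-small}, with a constant $C=C(k,T)$ independent of $y$ and $s$.

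There is no serious obstacle. The only point needing care is that the uniform moment bound is used on all of $(0,T]$, which is exactly what Proposition~\ref{P:Z-moments} supplies. We also do not need the sharper two-point Hölder theory, because the flat initial data makes the deterministic part identically $1$.
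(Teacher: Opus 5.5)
Your proposal is correct and follows essentially the same route as the paper: the mild formulation, then BDG combined with Minkowski/H\"older, then the uniform moment bound from Proposition~\ref{P:Z-moments}, then the heat-kernel integral $\int_0^s (s-r)^{-1/2}\,\ud r \lesssim s^{1/2}$. The only difference is cosmetic. The paper first reduces to $y=0$ by stationarity and treats $s\in[1,T]$ separately, whereas your single estimate with $M_T$ already covers all $s\in(0,T]$ and all $y$, so that split is not needed.
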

\begin{proof}
	By stationarity in the spatial variable, it suffices to consider $y=0$.
	For $s\in(0,1]$, from the mild formulation with flat initial data,
	\[
		Z(s,0)-1
		=\int_0^s\int_{\R} p_{s-r}(-z)\,Z(r,z)\,\xi(\ud r\,\ud z).
	\]
	By BDG, H\"older, and the moment bounds from Proposition~\ref{P:Z-moments}
	(applied with $u_0\equiv 1$, so $Z$), we obtain
	\[
		\Norm{Z(s,0)-1}_k
		\lesssim \left(\int_0^s\int_{\R} p_{s-r}(z)^2\,\Norm{Z(r,z)}_k^2\,\ud z\,\ud r\right)^{1/2}
		\lesssim \left(\int_0^s (s-r)^{-1/2}\,\ud r\right)^{1/2}
		\lesssim s^{1/4},
	\]
	which is \eqref{E:Zflat-small}.
	For $s\in[1,T]$, use the uniform moment bounds from
	Proposition~\ref{P:Z-moments} and the fact that $s^{1/4}\ge 1$ to conclude
	$\Norm{Z(s,0)-1}_k\lesssim s^{1/4}$.
\end{proof}

\begin{lemma}[Continuity of inverse moments]\label{L:Z-inv-cont}
	Fix $t>0$ and $q>0$. Then
	\[
		\lim_{\tau\to t}\ \E\!\left[Z(\tau,0)^{-q}\right]=\E\!\left[Z(t,0)^{-q}\right].
	\]
	In particular, $\E[Z(t-s,0)^{-1}]\to \E[Z(t,0)^{-1}]$ as $s\downarrow 0$.
\end{lemma}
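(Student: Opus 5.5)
The plan is to prove convergence in probability of $Z(\tau,0)^{-q}$ to $Z(t,0)^{-q}$ as $\tau\to t$, and then upgrade it to convergence of expectations through uniform integrability supplied by Proposition~\ref{P:Z-moments}.

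\textbf{Step 1: $L^2$ continuity of $Z(\cdot,0)$.} For $\tau,t$ in a compact interval $[t/2,2t]$, I will show $\Norm{Z(\tau,0)-Z(t,0)}_2\to0$ directly from the mild form \eqref{E:mild-flat} with $\beta=1$. Take $\tau<t$ without loss of generality. Then
\[
Z(t,0)-Z(\tau,0)=\int_0^\tau\int_{\R}\big(p_{t-r}(z)-p_{\tau-r}(z)\big)Z(r,z)\,\xi(\ud r\,\ud z)+\int_\tau^t\int_{\R}p_{t-r}(z)Z(r,z)\,\xi(\ud r\,\ud z).
\]
By the It\^o isometry and the uniform bound $\sup_{r\le 2t,z}\E[Z(r,z)^2]<\infty$ from Proposition~\ref{P:Z-moments}, the second moment is bounded by a constant times
\[
\int_0^\tau\!\!\int_{\R}\big(p_{t-r}(z)-p_{\tau-r}(z)\big)^2\,\ud z\,\ud r+\int_0^{t-\tau}(2\pi\cdot 2u)^{-1/2}\,\ud u .
\]
The second integral is $O(\sqrt{t-\tau})$. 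The first is the standard heat-kernel time-increment integral. By Plancherel it equals $\frac{1}{2\pi}\int_0^\tau\int_{\R} e^{-(\tau-r)\xi^2}\big(1-e^{-(t-\tau)\xi^2/2}\big)^2\,\ud\xi\,\ud r$, which is $O(\sqrt{t-\tau})$. Hence $Z(\tau,0)\to Z(t,0)$ in $L^2$, and therefore in probability.

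\textbf{Step 2: transfer to inverse powers.} Since $Z(t,0)>0$ almost surely (its negative moments are finite by Proposition~\ref{P:Z-moments}) and $r\mapsto r^{-q}$ is continuous on $(0,\infty)$, the continuous mapping theorem gives $Z(\tau,0)^{-q}\to Z(t,0)^{-q}$ in probability.

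\textbf{Step 3: uniform integrability.} Proposition~\ref{P:Z-moments}, applied with $T=2t$ and exponent $2q$, gives $\sup_{\tau\le 2t}\E[Z(\tau,0)^{-2q}]<\infty$. This bound makes the family $\{Z(\tau,0)^{-q}\}_{\tau\in[t/2,2t]}$ uniformly integrable, and Vitali's theorem then yields the convergence of expectations. The special case $q=1$ with $\tau=t-s$ gives the final assertion.

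\textbf{Main obstacle.} The only genuinely quantitative step is the heat-kernel increment bound in Step~1, and it is routine. One point needs care: the inverse moments must be controlled uniformly in $\tau$ near $t$. Pointwise finiteness would not suffice for uniform integrability. That uniformity is exactly what Proposition~\ref{P:Z-moments} provides, so I expect no real difficulty.
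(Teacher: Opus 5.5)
Your proof is correct and follows essentially the same route as the paper. Both arguments establish $L^2(\Omega)$-continuity of $\tau\mapsto Z(\tau,0)$ from the mild form via It\^o's isometry and the uniform moment bounds of Proposition~\ref{P:Z-moments}, and then transfer to $Z^{-q}$ using uniform inverse-moment bounds. The differences are minor: you obtain an explicit $O(\sqrt{|t-\tau|})$ bound on the squared $L^2$ distance via Plancherel where the paper appeals to dominated convergence, and you finish with convergence in probability plus uniform integrability where the paper uses $|x^{-q}-y^{-q}|\le q|x-y|(x^{-q-1}+y^{-q-1})$ together with H\"older.
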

\begin{proof}
	By Proposition~\ref{P:Z-moments}, for any compact interval $[t_0,t_1]\subset(0,\infty)$ one has
	$\sup_{\tau\in[t_0,t_1]}\E[Z(\tau,0)^{-2q}]<\infty$.
	Hence it suffices to prove that $Z(\tau,0)\to Z(t,0)$ in $L^2(\Omega)$ as $\tau\to t$.

	Fix $T>t$ and consider $\tau\in(0,T]$. From the mild formulation with flat initial data,
	\[
		Z(\tau,0)
		=
		1+\int_0^\tau\int_{\R} p_{\tau-r}(-z)\,Z(r,z)\,\xi(\ud r\,\ud z).
	\]
	Let $\tau'\to \tau$. Then
	\[
		Z(\tau',0)-Z(\tau,0)
		=
		\int_0^{\tau\wedge \tau'}\int_{\R}\big(p_{\tau'-r}-p_{\tau-r}\big)(-z)\,Z(r,z)\,\xi(\ud r\,\ud z)
		+\int_{\tau\wedge \tau'}^{\tau\vee \tau'}\int_{\R} p_{\tau^\ast-r}(-z)\,Z(r,z)\,\xi(\ud r\,\ud z),
	\]
	where $\tau^\ast=\tau$ or $\tau'$. By It\^o's isometry and Proposition~\ref{P:Z-moments} (with $p=2$),
	$\sup_{0<r\le T,\ z\in\R}\Norm{Z(r,z)}_2<\infty$, and hence
	\[
		\Norm{Z(\tau',0)-Z(\tau,0)}_2^2
		\lesssim
		\int_0^{\tau\wedge \tau'}\Norm{p_{\tau'-r}-p_{\tau-r}}_{L^2(\R)}^2\,\ud r
		+\int_{\tau\wedge \tau'}^{\tau\vee \tau'} \Norm{p_{\tau^\ast-r}}_{L^2(\R)}^2\,\ud r.
	\]
	Using $\Norm{p_{u}}_{L^2(\R)}^2=(4\pi u)^{-1/2}$ and the continuity of $u\mapsto p_u$ in $L^2(\R)$ for $u>0$,
	the right-hand side tends to $0$ as $\tau'\to\tau$ by dominated convergence.
	Thus $Z(\tau,0)$ is continuous in $L^2(\Omega)$.

	Finally, use
	\[
		\big|x^{-q}-y^{-q}\big|
		\le q\,|x-y|\,(x^{-q-1}+y^{-q-1}),
		\qquad x,y>0,
	\]
	and H\"older (together with Proposition~\ref{P:Z-moments}, applied to $q+1$) to conclude that
	$\E[Z(\tau,0)^{-q}]\to\E[Z(t,0)^{-q}]$ as $\tau\to t$.
\end{proof}

\section{Shear mixing for space--time white noise}\label{SS:shear-mixing}

Throughout this section, we assume that the probability space
$(\Omega,\mathcal{F},\P)$ supports the standard group of measure-preserving
automorphisms of the white-noise space generated by reflections, translations,
shears, dilations, and negation; see \cite{alberts.janjigian.ea:22:greens}
(after equation~(2.1)). For examples of such transformations, see
Example~2.1 therein. In the argument below we primarily use translations and
shears.

We record a basic mixing statement for the shear action on space--time white
noise. This complements the shear invariance identities established in
\cite[Proposition~2.3(iii)]{alberts.janjigian.ea:22:greens}, which yield
stationarity of various fields but do not by themselves imply decay of
correlations as the shear parameter becomes large.

For $(r,\nu)\in\R^2$, let $\theta_{r,\nu}$ denote the shear automorphism of
the white-noise probability space introduced in
\cite[discussion preceding Proposition~2.3]{alberts.janjigian.ea:22:greens}.
These automorphisms act by shearing
space along the time direction. At the level of the Gaussian field, this
transformation is characterized by the identity
\[
	\xi(f)\circ\theta_{r,\nu}
	=
	\xi\!\left(S_{r,-\nu}f\right),
	\qquad f\in L^2(\R^2),
\]
where $\xi(f):=\int_{\R^2} f(t,x)\,\xi(\mathrm dt\,\mathrm dx)$ and $S_{r,\nu}:L^2(\R^2)\to L^2(\R^2)$ is the shear operator defined by
\[
	(S_{r,\nu}f)(t,x)\coloneqq f\bigl(t,x+\nu(t-r)\bigr).
\]
In particular, $S_{r,\nu}$ is a unitary operator on $L^2(\R^2)$ and
$S_{r,\nu}^{-1}=S_{r,-\nu}$.

For $n\ge0$, let
\[
	\mathcal{H}_n = \overline{\text{Span}\{H_n(\xi(f)) : f\in L^2(\R^2),\ \Norm{f}_{L^2(\R^2)}=1\}}^{\,L^2(\Omega)},
\]
where $H_n$ is the
Hermite polynomial of degree $n$ with the normalization of
\cite[Section~1.1]{nualart:06:malliavin}. The space $\mathcal{H}_n$ is called the
$n$-th Wiener chaos.


\begin{proposition}\label{P:theta-invariant-Hn}
	For every $n\ge0$ and every $(r,\nu)\in\R^2$, the shear automorphism
	$\theta_{r,\nu}$ preserves the $n$-th Wiener chaos $\mathcal{H}_n$, i.e.
	if $F\in\mathcal{H}_n$, then $F\circ\theta_{r,\nu}\in\mathcal{H}_n$.
\end{proposition}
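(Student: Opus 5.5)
The plan is to check the claim on the generating set of $\mathcal{H}_n$ and then extend by linearity and continuity. Fix $(r,\nu)\in\R^2$ and $f\in L^2(\R^2)$ with $\Norm{f}_{L^2(\R^2)}=1$. Since $\theta_{r,\nu}$ is a map on $\Omega$, composition with it commutes with any deterministic function of a random variable. Hence
\[
	H_n(\xi(f))\circ\theta_{r,\nu}=H_n\bigl(\xi(f)\circ\theta_{r,\nu}\bigr)=H_n\bigl(\xi(S_{r,-\nu}f)\bigr).
\]
The shear operator $S_{r,-\nu}$ is unitary on $L^2(\R^2)$. Indeed, the change of variables $x\mapsto x-\nu(t-r)$ at fixed $t$ has unit Jacobian, so $\Norm{S_{r,-\nu}f}_{L^2(\R^2)}=1$. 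The right-hand side is therefore again one of the generators of $\mathcal{H}_n$, and $\theta_{r,\nu}$ maps the generating set of $\mathcal{H}_n$ into itself.

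Next, extend to the linear span. The map $F\mapsto F\circ\theta_{r,\nu}$ is linear, so it sends $\mathrm{Span}\{H_n(\xi(f))\}$ into the same span.

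Finally, pass to the $L^2(\Omega)$-closure. Because $\theta_{r,\nu}$ is measure preserving, the map $U F\coloneqq F\circ\theta_{r,\nu}$ is an isometry of $L^2(\Omega)$:
\[
	\E\bigl[|F\circ\theta_{r,\nu}|^2\bigr]=\E\bigl[|F|^2\bigr].
\]
Given $F\in\mathcal{H}_n$, take $F_k$ in the span with $F_k\to F$ in $L^2(\Omega)$. Then $UF_k\to UF$ in $L^2(\Omega)$, and each $UF_k$ lies in the span by the previous step. Since $\mathcal{H}_n$ is closed, $UF\in\mathcal{H}_n$.

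The only step needing care is the justification of measure preservation, and of the identity $\xi(f)\circ\theta_{r,\nu}=\xi(S_{r,-\nu}f)$ as an almost-sure identity for every $f$ rather than for a fixed countable family. Both are part of the assumed framework of \cite{alberts.janjigian.ea:22:greens}, taken as given at the start of this section. If needed, measure preservation can also be read off directly: the Gaussian family $\{\xi(S_{r,-\nu}f)\}_f$ is centered with covariance $\ip{S_{r,-\nu}f}{S_{r,-\nu}g}_{L^2(\R^2)}=\ip{f}{g}_{L^2(\R^2)}$, so it has the same law as $\{\xi(f)\}_f$. For $n=0$ the statement is trivial, since constants are invariant.
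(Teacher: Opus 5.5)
Your proof is correct and follows the paper's argument: you verify the claim on the generators $H_n(\xi(f))$ using $\xi(f)\circ\theta_{r,\nu}=\xi(S_{r,-\nu}f)$ and the unitarity of $S_{r,-\nu}$, then extend by linearity and by $L^2(\Omega)$-closure, since composition with the measure-preserving map $\theta_{r,\nu}$ is an isometry. Your extra remarks, on the almost-sure validity of the defining identity and on measure preservation via equality of the Gaussian covariance structures, are sound but go beyond what is needed, because the paper takes both as part of the assumed framework.
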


\begin{proof}
	For each generator $H_n(\xi(f))$ with $\Norm{f}_{L^2(\R^2)}=1$,
	the defining property $\xi(f)\circ\theta_{r,\nu}=\xi(S_{r,-\nu}f)$ gives
	\[
		H_n(\xi(f))\circ\theta_{r,\nu}
		= H_n\bigl(\xi(S_{r,-\nu}f)\bigr)
		\in \mathcal{H}_n,
	\]
	since $S_{r,-\nu}$ is unitary and hence
	$\Norm{S_{r,-\nu}f}_{L^2(\R^2)}=1$. The claim extends to all of
	$\mathcal{H}_n$ by linearity and $L^2(\Omega)$-closure
	(using that composition with the measure-preserving map
	$\theta_{r,\nu}$ is an isometry on $L^2(\Omega)$).
\end{proof}

\begin{remark}
	The composition $F\circ\theta_{r,\nu}$ is well defined for every
	$F\in L^2(\Omega)$. Indeed, it is well known that
	$L^2(\Omega)=\bigoplus_{n=0}^{\infty}\mathcal{H}_n$.
	Hence any $F\in L^2(\Omega)$ admits an expansion
	$F=\sum_{n=0}^{\infty}F_n$ with $F_n\in\mathcal{H}_n$ and convergence in
	$L^2(\Omega)$. Since Proposition~\ref{P:theta-invariant-Hn} shows that
	$\theta_{r,\nu}$ preserves each chaos $\mathcal{H}_n$, we may define
	\[
		F\circ\theta_{r,\nu}\coloneqq \sum_{n=0}^{\infty}F_n\circ\theta_{r,\nu},
		\qquad\text{in }L^2(\Omega).
	\]
	Because $\theta_{r,\nu}$ is measure-preserving, composition with
	$\theta_{r,\nu}$ is an isometry on $L^2(\Omega)$. Hence, if
	$S_N\coloneqq \sum_{n=0}^{N}F_n$, then
	$\Norm{S_N\circ\theta_{r,\nu}-S_M\circ\theta_{r,\nu}}_2
		=\Norm{S_N-S_M}_2\to0$ as $M,N\to\infty$, so the series above
	converges in $L^2(\Omega)$. Thus $F\circ\theta_{r,\nu}$ is well defined for
	all $F\in L^2(\Omega)$.
\end{remark}

\begin{lemma}[Shear is strongly mixing]\label{L:shear-mixing}
	For every $r\in\R$ and all $F,G\in L^2(\Omega)$,
	\[
		\lim_{|\nu|\to\infty} \E\!\left[F\cdot(G\circ\theta_{r,\nu})\right]
		= \E[F]\,\E[G];
	\]
	equivalently, $\Cov(F,\,G\circ\theta_{r,\nu})\to0$ as $|\nu|\to\infty$.
\end{lemma}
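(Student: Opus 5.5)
The plan is the standard chaos-by-chaos argument for Gaussian mixing: first treat the first chaos, then products of Hermite polynomials, and finish by density. Throughout, write $U_\nu F\coloneqq F\circ\theta_{r,\nu}$. Since $\theta_{r,\nu}$ is measure-preserving, $U_\nu$ is a unitary (in particular isometric) operator on $L^2(\Omega)$ that fixes constants. It therefore suffices to prove $\E[F\,U_\nu G]\to\E[F]\E[G]$ for $F,G$ in a total subset of $L^2(\Omega)$, and then extend by a $3\varepsilon$ argument.

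\textbf{Step 1 (weak convergence of the shear on $L^2(\R^2)$).} The first claim is that $S_{r,-\nu}f\rightharpoonup 0$ weakly in $L^2(\R^2)$ as $|\nu|\to\infty$, for every $f$. Because $S_{r,-\nu}$ is unitary, it is enough to check $\ip{g}{S_{r,-\nu}f}\to0$ for $f,g$ indicators of rectangles $[a,b]\times[c,d]$. For such $f,g$,
\[
	\ip{g}{S_{r,-\nu}f}=\int_a^b \bigl|[c,d]\cap([c,d]+\nu(t-r))\bigr|\,\ud t .
\]
The integrand vanishes unless $|t-r|\le (d-c)/|\nu|$, so the whole integral is at most $(d-c)\cdot 2(d-c)/|\nu|\to0$. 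Rectangles for $g$ span a dense set and the operators are uniformly bounded, so the weak convergence extends to all $f,g$.

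\textbf{Step 2 (exponential functionals).} Next take $F=\exp(\xi(f)-\tfrac12\Norm{f}^2)$ and $G=\exp(\xi(g)-\tfrac12\Norm{g}^2)$ with $f,g\in L^2(\R^2)$. The family of such Wick exponentials is total in $L^2(\Omega)$. Then $U_\nu G=\exp(\xi(S_{r,-\nu}g)-\tfrac12\Norm{g}^2)$. Because $(\xi(f),\xi(S_{r,-\nu}g))$ is jointly Gaussian, a direct computation gives
\[
	\E[F\,U_\nu G]=\exp\bigl(\ip{f}{S_{r,-\nu}g}\bigr)\longrightarrow 1=\E[F]\E[G]
\]
by Step 1. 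By bilinearity the same limit holds for finite linear combinations of Wick exponentials. (An alternative route is to use Proposition~\ref{P:theta-invariant-Hn} and check, via the Wiener--It\^o isometry, that $\ip{F_n}{U_\nu G_n}=n!\ip{f_n}{(S_{r,-\nu})^{\otimes n}g_n}\to0$ for each $n\ge1$. This requires weak convergence of tensor powers, which follows from Step 1 on product tensors.)

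\textbf{Step 3 (density).} For general $F,G\in L^2(\Omega)$ and $\varepsilon>0$, choose $F',G'$ in the span from Step 2 with $\Norm{F-F'}_2,\Norm{G-G'}_2<\varepsilon$. Using that $U_\nu$ is an isometry,
\[
	|\E[F\,U_\nu G]-\E[F'\,U_\nu G']|\le \varepsilon\bigl(\Norm{G}_2+\Norm{F'}_2\bigr).
\]
The means differ by the same order. Letting $|\nu|\to\infty$ and then $\varepsilon\to0$ concludes the proof.

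The main obstacle is Step 1, specifically confirming that the action $\xi(f)\circ\theta_{r,\nu}=\xi(S_{r,-\nu}f)$ (including the sign and normalization conventions of \cite{alberts.janjigian.ea:22:greens}) gives a shear that genuinely tends to infinity. It does, since the displacement $\nu(t-r)$ diverges for almost every $t$. Only the null set $\{t=r\}$ is fixed, and it carries no $L^2$ mass, which is exactly what the rectangle estimate above exploits.
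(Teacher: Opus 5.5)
Your proof is correct, but after the first step it takes a different route from the paper.

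Your Step~1 is the same weak-convergence statement as the paper's Step~I. The paper approximates by compactly supported $f,g$ and discards a thin slab $|t-r|\le\varepsilon$. Your rectangle computation instead gives an explicit $O(|\nu|^{-1})$ rate. As written, your display is only the diagonal case $f=g$. For two different rectangles the integrand $|[c',d']\cap([c,d]+\nu(t-r))|$ likewise vanishes once $|\nu(t-r)|$ exceeds a constant depending on both rectangles, so nothing changes.

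From there the paper works chaos by chaos. It uses Proposition~\ref{P:theta-invariant-Hn} (each $\mathcal{H}_n$ is invariant under $\theta_{r,\nu}$), the Hermite covariance identity for generators $H_n(\xi(f))$ with $\Norm{f}_{L^2(\R^2)}=1$, orthogonality of distinct chaoses for finite expansions, and a final $L^2$ approximation.

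You instead test on Wick exponentials $\mathcal{E}(g)=\exp(\xi(g)-\tfrac12\Norm{g}_{L^2(\R^2)}^2)$. Since composition with $\theta_{r,\nu}$ commutes with pointwise functions and $S_{r,-\nu}$ preserves norms, $\mathcal{E}(g)\circ\theta_{r,\nu}=\mathcal{E}(S_{r,-\nu}g)$. Then $\E[\mathcal{E}(f)\,\mathcal{E}(S_{r,-\nu}g)]=\exp(\ip{f}{S_{r,-\nu}g})\to 1$ in one line. This has three advantages:
\begin{itemize}
\item It collapses the paper's Steps~II--III into a single Gaussian computation.
\item It makes chaos invariance unnecessary.
\item It avoids Hermite normalization bookkeeping. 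With Nualart's normalization the paper's factor $n!$ in Step~II should read $1/n!$; this is harmless for the limit.
\end{itemize}

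The price is invoking totality of exponential vectors in $L^2(\Omega)$ rather than the Wiener chaos decomposition. Both facts rest on $\mathcal{F}$ being generated by $\xi$, so neither route needs more. The paper's version has the side benefit of exhibiting $\theta_{r,\nu}$ as acting diagonally on the chaoses. The paper also uses that structure in its preceding remark to make sense of $F\circ\theta_{r,\nu}$ for general $F\in L^2(\Omega)$.
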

\begin{proof}
	This lemma will be proved in four steps:

	\paragraph{Step I: weak convergence of the shear operators.} We first consider
	the case $F=\xi(f)$ and $G=\xi(g)$, where $f,g\in L^2(\R^2)$. Since
	$\xi(g)\circ\theta_{r,\nu}=\xi(S_{r,-\nu}g),$ it is enough to prove that
	\begin{equation}\label{E:Uweak0}
		\langle f,S_{r,-\nu}g\rangle_{L^2(\R^2)}\longrightarrow 0,
		\qquad |\nu|\to\infty .
	\end{equation}

	Let $f_k,g_k\in L^2(\R^2)$ be compactly supported and satisfy
	$f_k\to f$ and $g_k\to g$ in $L^2(\R^2)$. Since $S_{r,-\nu}$ is unitary,
	\[
		\begin{aligned}
			\bigl|\langle f,S_{r,-\nu}g\rangle-\langle f_k,S_{r,-\nu}g_k\rangle\bigr|
			 & \le \bigl|\langle f-f_k,S_{r,-\nu}g\rangle\bigr|
			+ \bigl|\langle f_k,S_{r,-\nu}(g-g_k)\rangle\bigr|    \\
			 & \le \Norm{f-f_k}_{L^2(\R^2)}\,\Norm{g}_{L^2(\R^2)}
			+ \Norm{f_k}_{L^2(\R^2)}\,\Norm{g-g_k}_{L^2(\R^2)},
		\end{aligned}
	\]
	uniformly in $\nu$. Therefore it suffices to prove \eqref{E:Uweak0} for
	compactly supported $f,g$.

	So let $f,g\in L^2(\R^2)$ be compactly supported. Then
	\[
		\langle f,S_{r,-\nu}g\rangle
		=
		\int_{\R}\int_{\R} f(t,x)\,g(t,x-\nu(t-r))\,\ud x\,\ud t.
	\]
	Fix $\varepsilon>0$, and split the $t$-integral over the sets
	\[
		A_{\varepsilon}\coloneqq \{|t-r|\le \varepsilon\},
		\qquad
		A_{\varepsilon}^c\coloneqq \R\setminus A_{\varepsilon}.
	\]
	On $A_{\varepsilon}$, by Cauchy--Schwarz,
	\[
		\left|
		\int_{A_{\varepsilon}}\int_{\R}
		f(t,x)\,g(t,x-\nu(t-r))\,\ud x\,\ud t
		\right|
		\le
		\Norm{f}_{L^2(\R^2)}\,
		\Norm{g\cdot \mathbf 1_{A_{\varepsilon}}}_{L^2(\R^2)}.
	\]
	Since $\mathbf 1_{A_{\varepsilon}}\to0$ pointwise a.e. as
	$\varepsilon\downarrow0$ and $|g\cdot \mathbf 1_{A_{\varepsilon}}|^2\le
		|g|^2\in L^1(\R^2)$, dominated convergence gives $\Norm{g\cdot \mathbf
			1_{A_{\varepsilon}}}_{L^2(\R^2)}\to0$. Fix such an $\varepsilon$.

	On $A_{\varepsilon}^c$, one has $|\nu(t-r)|\ge |\nu|\varepsilon$. Since $f,g$
	are compactly supported in $\R^2$, there exist $R_f,R_g<\infty$ such that
	$f(t,\cdot)$ and $g(t,\cdot)$ vanish outside $[-R_f,R_f]$ and $[-R_g,R_g]$,
	respectively, for every $t$. For each fixed $t\in A_{\varepsilon}^c$, the
	inner integral
	\[
		\int_{\R} f(t,x)\,g(t,x-\nu(t-r))\,\ud x
	\]
	is the $L^2(\R)$ inner product of $f(t,\cdot)$ with a translate of
	$g(t,\cdot)$ by an amount of magnitude at least $|\nu|\varepsilon$. If
	$|\nu|\varepsilon>R_f+R_g$, then these two supports are disjoint, so the inner
	product is identically $0$, uniformly in $t\in A_{\varepsilon}^c$. Hence the
	contribution from $A_{\varepsilon}^c$ vanishes for $|\nu|\gg1$, which proves
	\eqref{E:Uweak0}.

	\paragraph{Step II: the case of the $n$-th Wiener chaos.} Fix $n\ge1$. We
	prove the desired limit for random variables in $\mathcal{H}_n$. Since
	$\mathcal{H}_n$ is the closure in $L^2(\Omega)$ of the linear span of random
	variables of the form $H_n(\xi(f))$ with $f\in L^2(\R^2)$ and
	$\Norm{f}_{L^2(\R^2)}=1$, it is enough, by density, to establish the claim for
	finite linear combinations of such generators.

	Accordingly, let
	\[
		F=\sum_{i=1}^{N} a_i\,H_n(\xi(f_i))
		\quad \text{and} \quad
		G=\sum_{j=1}^{M} b_j\,H_n(\xi(g_j)),
	\]
	where $f_i,g_j\in L^2(\R^2)$ satisfy $\Norm{f_i}_{L^2(\R^2)}=\Norm{g_j}_{L^2(\R^2)}=1$, and
	$a_i,b_j\in\R$.

	Since $\xi(g)\circ\theta_{r,\nu}=\xi(S_{r,-\nu}g)$, we have
	\[
		G\circ\theta_{r,\nu}
		=
		\sum_{j=1}^{M} b_j\,H_n\!\bigl(\xi(S_{r,-\nu}g_j)\bigr).
	\]
	Hence,
	\[
		\E\bigl[F(G\circ\theta_{r,\nu})\bigr]
		=
		\sum_{i=1}^{N}\sum_{j=1}^{M}
		a_i b_j\,
		\E\!\Big[
			H_n(\xi(f_i))\,H_n\bigl(\xi(S_{r,-\nu}g_j)\bigr)
			\Big].
	\]
	Since $\Norm{f_i}_{L^2(\R^2)}=\Norm{S_{r,-\nu}g_j}_{L^2(\R^2)}=1$, the pair
	$\bigl(\xi(f_i),\xi(S_{r,-\nu}g_j)\bigr)$ is jointly Gaussian with unit
	variances. By Lemma~1.1.1 in \cite{nualart:06:malliavin},
	\[
		\E\!\Big[
			H_n(\xi(f_i))\,H_n\bigl(\xi(S_{r,-\nu}g_j)\bigr)
			\Big]
		=
		n!\,\ip{f_i}{S_{r,-\nu}g_j}_{L^2(\R^2)}^{\,n}.
	\]
	Therefore,
	\[
		\E\bigl[F(G\circ\theta_{r,\nu})\bigr]
		=
		n! \sum_{i=1}^{N}\sum_{j=1}^{M}
		a_i b_j\,\ip{f_i}{S_{r,-\nu}g_j}_{L^2(\R^2)}^{\,n}.
	\]

	By Step~I, $\ip{f_i}{S_{r,-\nu}g_j}\to 0$ as $|\nu|\to\infty$ for every $i,j$.
	Since this sum is finite, it follows that
	\[
		\E\bigl[F(G\circ\theta_{r,\nu})\bigr]\longrightarrow0
		\qquad\text{as }|\nu|\to\infty.
	\]
	Now let $F,G\in\mathcal{H}_n$ be arbitrary. Choose finite linear combinations
	$F^{(k)},G^{(k)}$ of generators $H_n(\xi(h))$ with $\Norm{h}_{L^2(\R^2)}=1$
	such that $F^{(k)}\to F$ and $G^{(k)}\to G$ in $L^2(\Omega)$. Then
	\[
		\begin{aligned}
			\left|\E\!\left[F(G\circ\theta_{r,\nu})\right]-\E\!\left[F^{(k)}(G^{(k)}\circ\theta_{r,\nu})\right]\right|
			\le \Norm{F-F^{(k)}}_2\,\Norm{G}_2
			+ \Norm{F^{(k)}}_2\,\Norm{G-G^{(k)}}_2,
		\end{aligned}
	\]
	where we used that composition with $\theta_{r,\nu}$ is an isometry on
	$L^2(\Omega)$. The right-hand side is independent of $\nu$ and tends to $0$ as
	$k\to\infty$ because $\Norm{F^{(k)}}_2\le
		\Norm{F}_2+\Norm{F-F^{(k)}}_2$. Since the previous
	paragraph gives $\E\!\left[F^{(k)}(G^{(k)}\circ\theta_{r,\nu})\right]\to0$ for
	each fixed $k$, it follows that
	\[
		\E\bigl[F(G\circ\theta_{r,\nu})\bigr]\longrightarrow0
		\qquad\text{as }|\nu|\to\infty
	\]
	for all $F,G\in\mathcal{H}_n$.

	\paragraph{Step III: finite chaos expansions.} Let
	\[
		F=\sum_{n=0}^{N} F_n
		\quad \text{and} \quad
		G=\sum_{m=0}^{M} G_m,
	\]
	where $F_n\in \mathcal{H}_n,$ and $G_m\in \mathcal{H}_m .$ By
	Proposition~\ref{P:theta-invariant-Hn}, we have $ G_m\circ\theta_{r,\nu}\in
		\mathcal{H}_m $ for every $m \ge 0$. Therefore, by orthogonality of Wiener
	chaoses,
	\[
		\E\!\left[F_n(G_m\circ\theta_{r,\nu})\right]=0
		\qquad\text{whenever }n\neq m .
	\]
	Hence
	\[
		\E\!\left[F(G\circ\theta_{r,\nu})\right]
		=
		\sum_{n=0}^{N}\sum_{m=0}^{M}
		\E\!\left[F_n(G_m\circ\theta_{r,\nu})\right]
		=
		\sum_{n=0}^{\min\{N,M\}}
		\E\!\left[F_n(G_n\circ\theta_{r,\nu})\right].
	\]

	For every $n\ge1$, Step~II shows that
	\[
		\E\!\left[F_n(G_n\circ\theta_{r,\nu})\right]\longrightarrow 0
		\qquad\text{as }|\nu|\to\infty .
	\]
	For $n=0$, $F_0=\E[F]$ and $G_0=\E[G]$ are constants, so
	\[
		\E\!\left[F_0(G_0\circ\theta_{r,\nu})\right]=\E[F]\E[G].
	\]
	Therefore
	\[
		\E\!\left[F(G\circ\theta_{r,\nu})\right]
		\longrightarrow
		\E[F]\E[G]
		\qquad\text{as }|\nu|\to\infty
	\]
	for all finite chaos expansions $F$ and $G$.

	\paragraph{Step IV: $L^2$ approximation.} Since
	$L^2(\Omega)=\bigoplus_{n=0}^{\infty}\mathcal H_n$, every $F,G\in L^2(\Omega)$
	can be approximated in $L^2(\Omega)$ by finite chaos expansions of the form
	considered in Step~III. Thus there exist finite chaos expansions
	$F^{(k)},G^{(k)}$ such that
	\[
		\Norm{F^{(k)}-F}_2\to 0
		\quad \text{and} \quad
		\Norm{G^{(k)}-G}_2\to 0.
	\]
	Writing
	\[
		\begin{aligned}
			\E\!\left[F(G\circ\theta_{r,\nu})\right]
			-\E\!\left[F^{(k)}(G^{(k)}\circ\theta_{r,\nu})\right]
			 & =
			\E\!\left[(F-F^{(k)})(G\circ\theta_{r,\nu})\right]  +
			\E\!\left[F^{(k)}\bigl((G-G^{(k)})\circ\theta_{r,\nu}\bigr)\right].
		\end{aligned}
	\]
	Using Cauchy--Schwarz, we obtain
	\[
		\left|
		\E\!\left[F(G\circ\theta_{r,\nu})\right]
		-
		\E\!\left[F^{(k)}(G^{(k)}\circ\theta_{r,\nu})\right]
		\right|
		\le
		\Norm{F-F^{(k)}}_2\,\Norm{G}_2
		+
		\Norm{F^{(k)}}_2\,\Norm{G-G^{(k)}}_2.
	\]
	The right-hand side is independent of $\nu$ and tends to $0$ as $k\to\infty$
	because $\Norm{F^{(k)}}_2\le
		\Norm{F}_2+\Norm{F-F^{(k)}}_2$. For fixed $k$, Step~III
	gives
	\[
		\E\!\left[F^{(k)}(G^{(k)}\circ\theta_{r,\nu})\right]
		\longrightarrow
		\E[F^{(k)}]\E[G^{(k)}]
		\qquad\text{as }|\nu|\to\infty.
	\]
	Letting first $|\nu|\to\infty$ and then $k\to\infty$, we obtain
	$\E\!\left[F(G\circ\theta_{r,\nu})\right] \to \E[F]\E[G]$. This
	proves Lemma~\ref{L:shear-mixing}.
\end{proof}

\section{Spatial decay: boundary layer approach}\label{S:spatial-decay}
Throughout this section, constants denoted by $C$ and $C_t$ are finite positive
constants that may change from line to line; when we write $C_t$, it depends
only on $t$ (and on fixed moment exponents), but never on the space-time
variables $s,x,y$.
\begin{lemma}[Exponential negligibility away from $s=0$]\label{L:t-t}
	Fix $t>0$ and $L\in(0,t)$. Then, as $|x|\to\infty$,
	\begin{equation}\label{E:time-tail}
		\int_{L}^{t}\int_{\R}
		\E\!\left[\mathcal{X}(t,s,x,y)\right]\,
		K(t,s,x,y)\,\ud y\,\ud s
		=o\!\left(|x|^{-2}\exp\!\left(-\frac{x^2}{4t}\right)\right).
	\end{equation}
\end{lemma}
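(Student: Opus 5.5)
Since $\E[\mathcal{X}]$ need not be nonnegative, I will bound the absolute value of the integral. The idea is that it is dominated by the uniform bound on $|\E[\mathcal{X}]|$ times a heat-kernel integral restricted to $s\in[L,t]$. By Lemma~\ref{L:X-moment} (with $T=t$), there is a constant $C_t$ with $|\E[\mathcal{X}(t,s,x,y)]|\le C_t$ uniformly over $0<s<t$ and $x,y\in\R$. The integral in \eqref{E:time-tail} is therefore bounded in absolute value by
\[
C_t\int_L^t\int_{\R}p_{t-s}(x-y)\,p_{t-s}(y)\,\ud y\,\ud s
= C_t\int_L^t p_{2(t-s)}(x)\,\ud s
= C_t\int_0^{t-L} p_{2r}(x)\,\ud r .
\]
Here I use the Chapman--Kolmogorov identity, exactly as in \eqref{E:Iflat-def}, and then substitute $r=t-s$.

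The key point is that the window $r\in[0,t-L]$ stays a positive distance from $r=t$. Hence the Gaussian factor is at most $\exp(-x^2/(4(t-L)))$, which is strictly smaller than $\exp(-x^2/(4t))$ at exponential scale. Concretely, for $r\le t-L$ and $|x|\ge1$, the map $r\mapsto p_{2r}(x)=(4\pi r)^{-1/2}e^{-x^2/(4r)}$ is increasing on $(0,x^2/2)$. Since $x^2/2>t$ for large $|x|$, this gives
\[
\int_0^{t-L}p_{2r}(x)\,\ud r\le (t-L)\,p_{2(t-L)}(x)
=\frac{t-L}{\sqrt{4\pi(t-L)}}\exp\!\Big(-\frac{x^2}{4(t-L)}\Big).
\]
Dividing by $|x|^{-2}e^{-x^2/(4t)}$ leaves a ratio bounded by
\[
C\,x^2\exp\!\Big(-x^2\Big(\frac{1}{4(t-L)}-\frac1{4t}\Big)\Big)
= C\,x^2\exp\!\Big(-\frac{L\,x^2}{4t(t-L)}\Big).
\]
This ratio tends to $0$ as $|x|\to\infty$ because $L>0$, which proves \eqref{E:time-tail}. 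One could alternatively invoke \eqref{E:Iflat-asymp} with $t$ replaced by $t-L$, but the monotonicity bound is more elementary and avoids needing the asymptotic.

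I do not expect a real obstacle here. The one point to check is that Lemma~\ref{L:X-moment} genuinely gives a bound uniform in $(s,x,y)$; its proof does, via Proposition~\ref{P:Z-moments} and Lemma~\ref{L:barG-moments}. The remaining steps are the elementary Gaussian comparison above and confirming that the monotonicity threshold $x^2/2>t-L$ holds for large $|x|$.
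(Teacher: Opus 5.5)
Your proposal is correct and follows essentially the paper's proof: the uniform bound on $\E[\mathcal{X}]$ from Lemma~\ref{L:X-moment}, Chapman--Kolmogorov to reduce to $\int_L^t p_{2(t-s)}(x)\,\ud s$, and the comparison of $e^{-x^2/(4(t-L))}$ with $e^{-x^2/(4t)}$. The only cosmetic difference is that you bound the integrand via monotonicity of $r\mapsto p_{2r}(x)$, whereas the paper bounds the Gaussian factor by $e^{-x^2/(4(t-L))}$ and integrates $(4\pi(t-s))^{-1/2}$ directly; you also spell out the final step that $x^2e^{-cx^2}\to0$, which the paper leaves implicit.
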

\begin{proof}
	By Lemma~\ref{L:X-moment}, taking $T=t$,
	\[
		\int_{L}^{t}\int_{\R}
		\E\!\left[\mathcal{X}(t,s,x,y)\right]K(t,s,x,y)\,\ud y\,\ud s
		\le C_t \int_{L}^{t}  p_{2(t-s)}(x)\,\ud s.
	\]
	For $s\in[L,t)$, $p_{2(t-s)}(x)\le (4\pi (t-s))^{-1/2}\exp(-x^2/(4(t-L)))$,
	and hence,
	\[
		\int_{L}^{t}  p_{2(t-s)}(x)\,\ud s
		\le C_{t,L}\exp\!\left(-\frac{x^2}{4(t-L)}\right)
		= C_{t,L} e^{-x^2/(4t)}e^{-c x^2}, \quad \text{with $c=c(t,L)= \frac{L}{ 4 t( t- L )}.$}
	\]
	This completes the proof of \eqref{E:time-tail}.
\end{proof}
\begin{lemma}[Boundary-layer scale $s\lesssim |x|^{-2}$]\label{L:t-t-x2}
	Fix $t>0$ and $\varepsilon\in(0,2)$. Then, as $|x|\to\infty$,
	\begin{equation}\label{E:time-tail-x2}
		\int_{|x|^{-2+\varepsilon}}^{t}\int_{\R}
		\E\!\left[\mathcal{X}(t,s,x,y)\right]\,
		K(t,s,x,y)\,\ud y\,\ud s
		=o\!\left(|x|^{-2}\exp\!\left(-\frac{x^2}{4t}\right)\right).
	\end{equation}
\end{lemma}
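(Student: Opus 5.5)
As in Lemma~\ref{L:t-t}, I would first remove the random factor. Lemma~\ref{L:X-moment} gives $\sup_{s,x,y}\E[\mathcal{X}(t,s,x,y)]\le C_t$. After integrating out $y$ with $\int_{\R}K(t,s,x,y)\,\ud y=p_{2(t-s)}(x)$, the left-hand side of \eqref{E:time-tail-x2} is bounded by
\[
C_t\int_{|x|^{-2+\varepsilon}}^{t}p_{2(t-s)}(x)\,\ud s
=C_t\int_{0}^{t-|x|^{-2+\varepsilon}}p_{2r}(x)\,\ud r ,
\]
where $r=t-s$. This reduces the statement to a purely deterministic estimate. We must show that this truncated heat-kernel integral is $o(|x|^{-2}e^{-x^2/(4t)})$, i.e.\ $o(I_{\mathrm{flat}}(t,x))$ by \eqref{E:Iflat-asymp}.

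For the deterministic bound, set $\delta=|x|^{-2+\varepsilon}$. The map $r\mapsto p_{2r}(x)$ is increasing on $(0,x^2/2)$, which contains $(0,t)$ once $|x|$ is large. So the integrand is at most $p_{2(t-\delta)}(x)$ on the whole range, and the integral is at most $t\,(4\pi(t-\delta))^{-1/2}\exp\bigl(-x^2/(4(t-\delta))\bigr)$. Now compare exponents:
\[
\frac{x^2}{4(t-\delta)}-\frac{x^2}{4t}=\frac{x^2\delta}{4t(t-\delta)}\ge \frac{|x|^{\varepsilon}}{4t^2}.
\]
Hence the bound is $\le C_t\,e^{-x^2/(4t)}\exp(-|x|^{\varepsilon}/(4t^2))$. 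Since $\exp(-|x|^{\varepsilon}/(4t^2))$ decays faster than any power, in particular faster than $|x|^{-2}$, the bound is $o(|x|^{-2}e^{-x^2/(4t)})$, which proves the lemma.

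Nothing here is a serious obstacle. The only point needing care is that the $|x|^{-2}$ prefactor in \eqref{E:Iflat-asymp} comes exactly from the window $r\in[t-O(|x|^{-2}),t]$. The gain therefore has to come from the exponent shift $x^2\delta/(4t^2)\asymp|x|^{\varepsilon}\to\infty$, which is where the condition $\varepsilon>0$ is used. The condition $\varepsilon<2$ only ensures $\delta\to0$, so the integration region is nonempty and lies inside $(0,t)$. The uniform moment bound from Lemma~\ref{L:X-moment} (built on Proposition~\ref{P:Z-moments} and Lemma~\ref{L:barG-moments}) supplies all the probabilistic input.
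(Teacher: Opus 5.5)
Your proposal is correct and follows essentially the same route as the paper: you remove $\E[\mathcal{X}]$ via the uniform bound of Lemma~\ref{L:X-moment}, integrate out $y$ to get $p_{2(t-s)}(x)$, and gain the factor $\exp(-|x|^{\varepsilon}/(4t^2))$ from the exponent shift $x^2\delta/(4t(t-\delta))$. The only cosmetic difference is that you bound $p_{2r}(x)$ by its endpoint value using monotonicity of $r\mapsto p_{2r}(x)$ on $(0,x^2/2)$, whereas the paper (via Lemma~\ref{L:t-t} with $L=|x|^{-2+\varepsilon}$) bounds only the exponential factor and integrates $(t-s)^{-1/2}$; your version also makes explicit the uniformity in $L$ that the paper leaves implicit.
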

\begin{proof}
	The argument is the same as in Lemma~\ref{L:t-t}, taking $L_x
		=|x|^{-2+\varepsilon}$. As $|x| \to \infty$, note that $c(t,L_x) \ge
		\frac{L_x}{8 t^2}$. Hence, as $|x| \to \infty$,  $\exp(-c x^2) \le
		\exp\left(- |x|^{\varepsilon} / (8 t^2 ) \right)$. This completes the proof
	of \eqref{E:time-tail-x2}.
\end{proof}

\begin{lemma}\label{L:Bcov0}
	Fix $t>0$ and $\varepsilon\in(0,2)$. As $|x|\to\infty$,
	\begin{equation}\label{E:Bcov0}
		\int_{0}^{|x|^{-2+\varepsilon}}\!\!\int_{\R}
		\Big(
		\E\!\left[\mathcal{X}(t,s,x,y)\right]
		- \E\!\left[\mathcal{B}(t,x;s,y)\,\mathcal{B}(t,0;s,y)\right]
		\Big)\,
		K(t,s,x,y)\,\ud y\,\ud s
		=o\!\left(|x|^{-2}e^{-\frac{x^2}{4t}}\right).
	\end{equation}
\end{lemma}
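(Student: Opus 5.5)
The plan is to combine Lemma~\ref{L:f-key-assump-Z} with the explicit form of the kernel $K$. For $|x|$ large enough that $|x|^{-2+\varepsilon}\le t\wedge 1$, every $s$ in the integration range satisfies $s\in(0,t\wedge1]$. Lemma~\ref{L:f-key-assump-Z} therefore bounds the integrand difference pointwise by $C s^{1/4}$, uniformly in $y$ and $x$. Since $K\ge 0$, the left-hand side of \eqref{E:Bcov0} is bounded in absolute value by
\[
C\int_0^{|x|^{-2+\varepsilon}} s^{1/4}\int_{\R}K(t,s,x,y)\,\ud y\,\ud s
\le C\,|x|^{-(2-\varepsilon)/4}\int_0^{|x|^{-2+\varepsilon}} p_{2(t-s)}(x)\,\ud s .
\]
Here I use the Chapman--Kolmogorov identity $\int_{\R}p_{t-s}(x-y)p_{t-s}(y)\,\ud y=p_{2(t-s)}(x)$, exactly as in \eqref{E:Iflat-def}.

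It remains to compare the last integral with $I_{\mathrm{flat}}(t,x)$. Since $p_{2(t-s)}(x)\ge 0$, the integral is at most $\int_0^t p_{2(t-s)}(x)\,\ud s=I_{\mathrm{flat}}(t,x)$. By \eqref{E:Iflat-asymp}, $I_{\mathrm{flat}}(t,x)\sim \frac{2}{\sqrt\pi}t^{3/2}|x|^{-2}e^{-x^2/(4t)}$. Hence the whole expression is $O\big(|x|^{-(2-\varepsilon)/4}\,|x|^{-2}e^{-x^2/(4t)}\big)$. Because $\varepsilon<2$, the prefactor $|x|^{-(2-\varepsilon)/4}$ tends to $0$, which gives the claimed $o(\cdot)$.

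No step is a real obstacle, since the uniform $s^{1/4}$ bound of Lemma~\ref{L:f-key-assump-Z} is already available. The only points to check are the following. First, the uniformity of that bound in $(x,y)$, which holds because the constant there depends only on $t$. Second, the trivial comparison of the truncated integral with $I_{\mathrm{flat}}$. The sharper observation that the boundary layer actually carries essentially all of $I_{\mathrm{flat}}$ is not needed here, since only an upper bound is required.
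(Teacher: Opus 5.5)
Your proposal is correct and follows the paper's proof essentially verbatim. Both arguments use the pointwise $C s^{1/4}$ bound from Lemma~\ref{L:f-key-assump-Z}, the identity $\int_{\R}K(t,s,x,y)\,\ud y=p_{2(t-s)}(x)$, the bound $s^{1/4}\le |x|^{-(2-\varepsilon)/4}$ on the integration range, and the comparison with $I_{\mathrm{flat}}(t,x)$ via \eqref{E:Iflat-asymp}. You also make explicit that one needs $|x|^{-2+\varepsilon}\le t\wedge 1$ to invoke that lemma, a point the paper leaves implicit.
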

\begin{proof}
	This follows immediately from Lemma~\ref{L:f-key-assump-Z}
	and~\eqref{E:Iflat-asymp}. Denote by $I$ the integral in \eqref{E:Bcov0} and
	set $L_x\coloneqq |x|^{-2+\varepsilon}$. Then
	\[
		|I|
		\lesssim \int_{0}^{L_x}\int_{\R} s^{1/4}\,
		K(t,s,x,y)\,\ud y\,\ud s
		= \int_{0}^{L_x} s^{1/4}\,p_{2(t-s)}(x)\,\ud s
		\le L_x^{1/4}\int_{0}^{L_x} p_{2(t-s)}(x)\,\ud s
		\le L_x^{1/4}\,I_{\mathrm{flat}}(t,x).
	\]
	Using \eqref{E:Iflat-asymp}, we have
	$I_{\mathrm{flat}}(t,x)=O\!\left(|x|^{-2}\exp\!\left(-{x^2}/{(4t)}\right)\right)$
	as $|x|\to\infty$, while $L_x^{1/4}=|x|^{(-2+\varepsilon)/4}=o(1)$ for
	$\varepsilon\in(0,2)$. This yields \eqref{E:Bcov0}.
\end{proof}

Fix $\varepsilon\in(0,2)$. As a consequence of Lemmas~\ref{L:t-t-x2}
and~\ref{L:Bcov0}, we get: as $|x| \to \infty$,
\begin{equation}\label{E:r-BB}
	\Cov[h(t,x),h(t,0)]
	=  \int_{0}^{|x|^{-2+\varepsilon}}\!\!\!\int_{\R}  \E\!\left[\mathcal{B}(t,x;s,y)\,\mathcal{B}(t,0;s,y)\right] \,
	K(t,s,x,y)\,\ud y\,\ud s + o\!\left(|x|^{-2}e^{ -\frac{x^2}{4t} }\right).
\end{equation}

We now replace the boundary-layer factor $\mathcal{B}$ by a future-only analogue
which is deterministic and therefore amenable to shear-mixing arguments. Fix
$t>0$. For $0<s<t$ define the \emph{future-only} flat partition function
\begin{equation}\label{E:Ztilde}
	\widetilde Z(t,x;s)\coloneqq \int_{\R}\mathcal{G}(t,x;s,u)\,\ud u,
\end{equation}
which depends only on the noise in the time interval $(s,t]$. Define also
\begin{equation}\label{E:Btilde}
	\widetilde{\mathcal{B}}(t,x;s,y)
	\coloneqq \E_{s,t}\!\left[\frac{\bar{\mathcal{G}}(t,x;s,y)}{\widetilde Z(t,x;s)}\right]
	= \E\left[\frac{\bar{\mathcal{G}}(t,x;s,y)}{\widetilde Z(t,x;s)}\right],
\end{equation}
where the equality is due to the fact that both $\widetilde Z(t,x;s)$ and
$\bar{\mathcal{G}}(t,x;s,y)$ depend only on the noise in $(s,t]$ and hence
$\widetilde{\mathcal{B}}(t,x;s,y)$ is deterministic.

\begin{lemma}[Small-time reduction of $\mathcal{B}$]\label{L:B-s-r}
	There exists $C_t<\infty$ such that for all $s\in(0,t\wedge 1]$ and all
	$x,y\in\R$,
	\[
		\Norm{\mathcal{B}(t,x;s,y)-\widetilde{\mathcal{B}}(t,x;s,y)}_2
		\le C_t\,s^{1/4}.
	\]
\end{lemma}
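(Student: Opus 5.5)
The plan is to compare the two denominators $Z(t,x)$ and $\widetilde Z(t,x;s)$ inside the conditional expectation $\E_{s,t}$. First, the Chapman--Kolmogorov (semigroup) property of the Green's function gives
\[
Z(t,x)=\int_{\R}\mathcal{G}(t,x;s,u)\,Z(s,u)\,\ud u .
\]
This follows from Lemma~\ref{L:Z-green-repr} together with uniqueness of mild solutions started at time $s$ from $Z(s,\cdot)$, which is $\mathcal{F}_s$-measurable and independent of the noise on $(s,t]$. Subtracting the definition \eqref{E:Ztilde} yields
\[
Z(t,x)-\widetilde Z(t,x;s)=\int_{\R}\mathcal{G}(t,x;s,u)\,\bigl(Z(s,u)-1\bigr)\,\ud u .
\]
Writing $\mathcal{G}=p_{t-s}(x-u)\bar{\mathcal{G}}$ and using that $p_{t-s}(x-\cdot)$ is a probability density, Minkowski's inequality combined with independence of $\bar{\mathcal{G}}(t,x;s,u)$ (future noise) and $Z(s,u)$ (past noise) gives
\[
\Norm{Z(t,x)-\widetilde Z(t,x;s)}_k\le \sup_u\Norm{\bar{\mathcal{G}}(t,x;s,u)}_k\,\sup_u\Norm{Z(s,u)-1}_k\le C\,s^{1/4}
\]
by Lemma~\ref{L:barG-moments} and Lemma~\ref{L:Zflat-small}.

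Next I write
\[
\mathcal{B}-\widetilde{\mathcal{B}}=\E_{s,t}\!\left[\bar{\mathcal{G}}(t,x;s,y)\,\frac{\widetilde Z(t,x;s)-Z(t,x)}{Z(t,x)\,\widetilde Z(t,x;s)}\right].
\]
By conditional Jensen, $\Norm{\mathcal{B}-\widetilde{\mathcal{B}}}_2$ is bounded by the unconditional $L^2$ norm of the integrand. Hölder's inequality with four factors (say $L^8$ each) then bounds this by
\[
\Norm{\bar{\mathcal{G}}}_8\,\Norm{Z(t,x)-\widetilde Z(t,x;s)}_8\,\Norm{Z(t,x)^{-1}}_8\,\Norm{\widetilde Z(t,x;s)^{-1}}_8 .
\]
The first factor is controlled by Lemma~\ref{L:barG-moments}, the second by the previous step, and the third by Proposition~\ref{P:Z-moments}.

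For the last factor, note that $\widetilde Z(t,x;s)$ has the law of the flat solution $Z(t-s,x)$, by time-translation invariance of the white noise. Hence its negative moments are bounded uniformly in $s\in(0,t\wedge1]$ by Proposition~\ref{P:Z-moments} applied on $[0,t]$. Alternatively, one can run the Jensen argument from the proof of Proposition~\ref{P:Z-moments} directly with the probability measure $p_{t-s}(x-u)\,\ud u$.

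The main point requiring care is the first step: justifying the semigroup identity and the Minkowski/independence estimate rigorously, including measurability and the stochastic Fubini argument needed to integrate the random field $\mathcal{G}(t,x;s,\cdot)$ against $Z(s,\cdot)$. I would handle this by truncating the $u$-integral and passing to the limit in $L^2(\Omega)$, as in the proof of Lemma~\ref{L:Z-green-repr}. The remaining steps are routine bounds.
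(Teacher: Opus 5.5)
Your proposal is correct and follows essentially the same route as the paper. You use the Markov/semigroup identity, then Minkowski together with past/future independence and Lemma~\ref{L:Zflat-small} to get $\Norm{Z(t,x)-\widetilde Z(t,x;s)}_k\le C_t s^{1/4}$, then conditional Jensen and H\"older with the uniform moment and inverse-moment bounds (including the law identity $\widetilde Z(t,x;s)\overset{d}{=}Z(t-s,0)$). The only cosmetic difference is that you use a single four-factor H\"older in $L^8$, whereas the paper first bounds $\Norm{Z(t,x)^{-1}-\widetilde Z(t,x;s)^{-1}}_3$ via three $L^9$ norms and then pairs it with $\Norm{\bar{\mathcal{G}}(t,x;s,y)}_6$.
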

\begin{proof}
	Fix $s\in(0,1]$. By the Markov property of the SHE~\eqref{E:she},
	$Z(t,x)=\int_{\R}\mathcal{G}(t,x;s,u)\,Z(s,u)\,\ud u$. Therefore,
	\[
		Z(t,x)-\widetilde Z(t,x;s)
		=\int_{\R}\mathcal{G}(t,x;s,u)\big(Z(s,u)-1\big)\,\ud u.
	\]
	Let $k\ge 2$. By Minkowski and the independence between $\mathcal{G}(t,x;s,u)$
	and $Z(s,u)$,
	\[
		\begin{aligned}
			\Norm{Z(t,x)-\widetilde Z(t,x;s)}_k
			 & \le \int_{\R}\Norm{\mathcal{G}(t,x;s,u)\big(Z(s,u)-1\big)}_k\,\ud u \\
			 & =\int_{\R}\Norm{\mathcal{G}(t,x;s,u)}_k\,\Norm{Z(s,u)-1}_k\,\ud u.
		\end{aligned}
	\]
	By Lemma~\ref{L:barG-moments} and $\mathcal{G}=p\,\bar{\mathcal{G}}$, we have
	$\Norm{\mathcal{G}(t,x;s,u)}_k\le C_t\,p_{t-s}(x-u)$, uniformly in $x,u$.
	Moreover, Lemma~\ref{L:Zflat-small} yields $\Norm{Z(s,u)-1}_k\le C_t\,s^{1/4}$
	uniformly in $u$. Using $\int_{\R}p_{t-s}(x-u)\,\ud u=1$, we obtain
	\begin{equation}\label{E:Z-vs-Ztilde-smalltime}
		\sup_{x\in\R}\Norm{Z(t,x)-\widetilde Z(t,x;s)}_k
		\le C_t\,s^{1/4}.
	\end{equation}

	Next, using the identity
	\[
		\left|Z(t,x)^{-1}-\widetilde Z(t,x;s)^{-1}\right|
		=\frac{\left|Z(t,x)-\widetilde Z(t,x;s)\right|}{Z(t,x)\,\widetilde Z(t,x;s)}.
	\]
	Apply H\"older and Proposition~\ref{P:Z-moments} (with $T=t$ and $q=9$): since
	$\widetilde Z(t,x;s)$ has the same law as $Z(t-s,0)$ by stationarity of the noise, for $k=3$,
	\[
		\sup_{x\in\R}\Norm{Z(t,x)^{-1}-\widetilde Z(t,x;s)^{-1}}_3
		\le
		\sup_{x\in\R}\Norm{Z(t,x)-\widetilde Z(t,x;s)}_9\,
		\sup_{x\in\R}\Norm{Z(t,x)^{-1}}_9\,
		\sup_{x\in\R}\Norm{\widetilde Z(t,x;s)^{-1}}_9
		\le C_t\,s^{1/4},
	\]
	by \eqref{E:Z-vs-Ztilde-smalltime}.
	Therefore, using conditional Jensen and H\"older,
	\[
		\begin{aligned}
			\Norm{\mathcal{B}(t,x;s,y)-\widetilde{\mathcal{B}}(t,x;s,y)}_2
			 & =\Norm{\E_{s,t}\!\left[\bar{\mathcal{G}}(t,x;s,y)\left(Z(t,x)^{-1}-\widetilde Z(t,x;s)^{-1}\right)\right]}_2 \\
			 & \le \Norm{\bar{\mathcal{G}}(t,x;s,y)\left(Z(t,x)^{-1}-\widetilde Z(t,x;s)^{-1}\right)}_2                     \\
			 & \le \Norm{\bar{\mathcal{G}}(t,x;s,y)}_6\,\Norm{Z(t,x)^{-1}-\widetilde Z(t,x;s)^{-1}}_3
			\le C_t\,s^{1/4},
		\end{aligned}
	\]
	where we used Lemma~\ref{L:barG-moments} for $\Norm{\bar{\mathcal{G}}}_6$.
\end{proof}

\begin{lemma}[Product reduction to the future-only profile]\label{L:key-a-to-Btilde}
	Fix $t>0$. Then there exists $C_t<\infty$ such that for all $s\in(0,t\wedge
		1]$ and all $x,y\in\R$,
	\begin{equation}\label{E:key-a-to-Btilde}
		\left|
		\E\!\left[\mathcal{B}(t,x;s,y)\mathcal{B}(t,0;s,y)\right]
		-\widetilde{\mathcal{B}}(t,x;s,y)\,\widetilde{\mathcal{B}}(t,0;s,y)
		\right|
		\le C_t\,s^{1/4}.
	\end{equation}
\end{lemma}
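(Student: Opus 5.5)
The plan is to write the product difference as a telescoping sum and control each piece with Lemma~\ref{L:B-s-r}, using that $\widetilde{\mathcal{B}}$ is deterministic. Abbreviate $\mathcal{B}_x\coloneqq\mathcal{B}(t,x;s,y)$, $\mathcal{B}_0\coloneqq\mathcal{B}(t,0;s,y)$, and similarly $\widetilde{\mathcal{B}}_x,\widetilde{\mathcal{B}}_0$. Then
\[
\mathcal{B}_x\mathcal{B}_0-\widetilde{\mathcal{B}}_x\widetilde{\mathcal{B}}_0
=(\mathcal{B}_x-\widetilde{\mathcal{B}}_x)\,\mathcal{B}_0+\widetilde{\mathcal{B}}_x\,(\mathcal{B}_0-\widetilde{\mathcal{B}}_0).
\]
Since $\widetilde{\mathcal{B}}_x\widetilde{\mathcal{B}}_0$ is a constant, it equals its own expectation. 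Taking expectations therefore gives
\[
\bigl|\E[\mathcal{B}_x\mathcal{B}_0]-\widetilde{\mathcal{B}}_x\widetilde{\mathcal{B}}_0\bigr|
\le \Norm{\mathcal{B}_x-\widetilde{\mathcal{B}}_x}_2\Norm{\mathcal{B}_0}_2+|\widetilde{\mathcal{B}}_x|\,\Norm{\mathcal{B}_0-\widetilde{\mathcal{B}}_0}_2 ,
\]
by Cauchy--Schwarz in the first term.

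For the difference factors, Lemma~\ref{L:B-s-r} gives $C_t s^{1/4}$, uniformly in $x,y$ and $s\in(0,t\wedge1]$. This applies to both $x$ and $0$, since the lemma is uniform in the spatial endpoint.

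It remains to bound $\Norm{\mathcal{B}_0}_2$ and $|\widetilde{\mathcal{B}}_x|$ uniformly.

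\begin{itemize}
\item For $\Norm{\mathcal{B}_0}_2$, I use conditional Jensen and then H\"older, exactly as in the proof of Lemma~\ref{L:X-moment}:
\[
\Norm{\mathcal{B}_0}_2\le\Norm{\bar{\mathcal{G}}(t,0;s,y)}_4\,\Norm{Z(t,0)^{-1}}_4 .
\]
The right side is bounded by Lemma~\ref{L:barG-moments} and Proposition~\ref{P:Z-moments}.
\item For $\widetilde{\mathcal{B}}_x$, I bound it by $\Norm{\bar{\mathcal{G}}(t,x;s,y)}_2\,\Norm{\widetilde Z(t,x;s)^{-1}}_2$. Here $\widetilde Z(t,x;s)$ has the law of $Z(t-s,0)$, as observed in the proof of Lemma~\ref{L:B-s-r}. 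Proposition~\ref{P:Z-moments}, applied with $T=t$ and taking the supremum over $t-s\in(0,t]$, then bounds it uniformly in $s$.
\end{itemize}

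The only point requiring care is this uniformity in $s$ of the inverse moment of $\widetilde Z$. It holds because Proposition~\ref{P:Z-moments} is stated with a supremum over $0<t\le T$, which covers every $t-s\in(0,t]$. Combining the two bounds gives the claimed $C_t s^{1/4}$.
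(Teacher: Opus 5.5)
Your proposal is correct and follows the paper's proof essentially line by line. You use the same decomposition $(\mathcal{B}_x-\widetilde{\mathcal{B}}_x)\mathcal{B}_0+\widetilde{\mathcal{B}}_x(\mathcal{B}_0-\widetilde{\mathcal{B}}_0)$, Cauchy--Schwarz with Lemma~\ref{L:B-s-r}, the uniform bound on $\Norm{\mathcal{B}_0}_2$ via conditional Jensen and $L^4$--$L^4$ H\"older, and the uniform bound on $|\widetilde{\mathcal{B}}_x|$ via the fact that $\widetilde Z(t,x;s)$ has the law of $Z(t-s,0)$ together with Proposition~\ref{P:Z-moments}.
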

\begin{proof}
	By H\"older and Lemma~\ref{L:barG-moments} together with
	Proposition~\ref{P:Z-moments}, there exists $C_t<\infty$ such that
	\begin{equation}\label{E:B-unif-L2}
		\sup_{\substack{0<s<t\\ x,y\in\R}}\Norm{\mathcal{B}(t,x;s,y)}_2
		\le \sup_{\substack{0<s<t\\ x,y\in\R}}\Norm{\bar{\mathcal{G}}(t,x;s,y)}_4\,\sup_{\substack{0<s<t\\ x\in\R}}\Norm{Z(t,x)^{-1}}_4
		\le C_t.
	\end{equation}
	Similarly, by Lemma~\ref{L:barG-moments} and Proposition~\ref{P:Z-moments}
	(applied to $\widetilde Z(t,x;s)$, which has the same law as $Z(t-s,0)$),
	\begin{equation}\label{E:Btilde-unif}
		\sup_{\substack{0<s<t\\ x,y\in\R}}|\widetilde{\mathcal{B}}(t,x;s,y)|<\infty.
	\end{equation}

	Now write $\mathcal{B}_x\coloneqq \mathcal{B}(t,x;s,y)$, $\mathcal{B}_0\coloneqq \mathcal{B}(t,0;s,y)$,
	$\widetilde{\mathcal{B}}_x\coloneqq \widetilde{\mathcal{B}}(t,x;s,y)$ and $\widetilde{\mathcal{B}}_0\coloneqq \widetilde{\mathcal{B}}(t,0;s,y)$.
	Then
	\[
		\E[\mathcal{B}_x\mathcal{B}_0]-\widetilde{\mathcal{B}}_x\widetilde{\mathcal{B}}_0
		=\E\!\left[(\mathcal{B}_x-\widetilde{\mathcal{B}}_x)\mathcal{B}_0\right]
		+\widetilde{\mathcal{B}}_x\,\E[\mathcal{B}_0-\widetilde{\mathcal{B}}_0].
	\]
	The first term is bounded by $\Norm{\mathcal{B}_x-\widetilde{\mathcal{B}}_x}_2\,\Norm{\mathcal{B}_0}_2$,
	and the second by $|\widetilde{\mathcal{B}}_x|\,\Norm{\mathcal{B}_0-\widetilde{\mathcal{B}}_0}_2$.
	Using Lemma~\ref{L:B-s-r} and the uniform bounds \eqref{E:B-unif-L2}--\eqref{E:Btilde-unif} yields \eqref{E:key-a-to-Btilde}.
\end{proof}

\begin{lemma}[Annealed shift identity for flat initial data]\label{L:B-shift}
	Fix $0<s<t$ and
	$x,y\in\R$. By translation stationarity of the noise and then
	shear invariance of the normalized Green's
	function~\cite{alberts.janjigian.ea:22:greens}*{Proposition~2.3(iii)},
	\begin{align}
		\E\!\left[\mathcal{B}(t,x;s,y)\right]
		 & = \E\!\left[
			\frac{\bar{\mathcal{G}}(t,0;s,y-x)}{\int_{\R} p_{t-s}(z)\,\bar{\mathcal{G}}(t,0;s,z)\,Z(s,z)\,\ud z}
		\right] \label{E:B-shift} \\
		 & = \E\!\left[
			\frac{\bar{\mathcal{G}}(t,0;s,0)}{\int_{\R} p_{t-s}(z+y-x)\,\bar{\mathcal{G}}(t,0;s,z)\,Z(s,z)\,\ud z}
			\right]. \label{E:B-shift-shear}
	\end{align}
	Similarly, for the future-only profile,
	\begin{align}
		\widetilde{\mathcal{B}}(t,x;s,y)
		 & = \E\!\left[
			\frac{\bar{\mathcal{G}}(t,0;s,y-x)}{\int_{\R} p_{t-s}(z)\,\bar{\mathcal{G}}(t,0;s,z)\,\ud z}
		\right] \label{E:Btilde-shift} \\
		 & = \E\!\left[
			\frac{\bar{\mathcal{G}}(t,0;s,0)}{\int_{\R} p_{t-s}(z+y-x)\,\bar{\mathcal{G}}(t,0;s,z)\,\ud z}
			\right]. \label{E:Btilde-shift-shear}
	\end{align}
\end{lemma}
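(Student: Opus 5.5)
I will start from the definitions of $\mathcal{B}$ and $\widetilde{\mathcal{B}}$ and rewrite the denominators via the Markov property. By the Chapman--Kolmogorov identity already used in Lemma~\ref{L:B-s-r},
\[
Z(t,x)=\int_{\R}\mathcal{G}(t,x;s,u)\,Z(s,u)\,\ud u=\int_{\R}p_{t-s}(x-u)\,\bar{\mathcal{G}}(t,x;s,u)\,Z(s,u)\,\ud u .
\]
Similarly, $\widetilde Z(t,x;s)=\int_{\R}p_{t-s}(x-u)\bar{\mathcal{G}}(t,x;s,u)\,\ud u$. Since $\E[\E_{s,t}[\,\cdot\,]]=\E[\,\cdot\,]$, we get $\E[\mathcal{B}(t,x;s,y)]=\E[\bar{\mathcal{G}}(t,x;s,y)/Z(t,x)]$. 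The substitution $u=x+z$ in the denominator then gives an integrand $p_{t-s}(z)\,\bar{\mathcal{G}}(t,x;s,x+z)\,Z(s,x+z)$.

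\textbf{First identity.} I will apply the spatial translation $\tau_{-x}$ of the white noise (shift every spatial coordinate by $-x$). It is measure preserving and sends $\bar{\mathcal{G}}(t,x;s,x+z)$ to $\bar{\mathcal{G}}(t,0;s,z)$, $\bar{\mathcal{G}}(t,x;s,y)$ to $\bar{\mathcal{G}}(t,0;s,y-x)$, and $Z(s,x+z)$ to $Z(s,z)$. Here I use that flat initial data is translation invariant, so $Z(s,\cdot)$ is a functional of the noise that commutes with spatial shifts. This must hold jointly for the whole random field $\{\bar{\mathcal{G}}(t,0;s,\cdot),Z(s,\cdot)\}$, not just marginally. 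That is true because both are measurable functionals of $\xi$, built via the mild equations, which commute with translations. The expectation of the ratio is therefore unchanged, giving \eqref{E:B-shift}. The same computation without $Z(s,\cdot)$ gives \eqref{E:Btilde-shift}. For $\widetilde{\mathcal{B}}$ no outer expectation is needed, since it is already deterministic by \eqref{E:Btilde}.

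\textbf{Second identity (shear step).} Set $w\coloneqq y-x$. I need a measure-preserving map on the noise under which, jointly in $z$, $\bar{\mathcal{G}}(t,0;s,z)\mapsto\bar{\mathcal{G}}(t,0;s,z-w)$ while $Z(s,\cdot)$ is unchanged. I will use the composition of the shear $\theta_{t,\nu}$ (pivot at the terminal time $t$, with $\nu$ chosen so the displacement at time $s$ equals $w$, i.e. $\nu=w/(t-s)$ up to sign convention) with the restriction to $(s,t]$. Proposition~2.3(iii) of \cite{alberts.janjigian.ea:22:greens} says that a shear fixing the endpoint $(t,0)$ moves the starting point $(s,z)$ to $(s,z\pm w)$ and leaves $\bar{\mathcal{G}}$ invariant as a field. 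The key device is to act by the shear only on the noise in $(s,t]$ and by the identity on $[0,s]$. This is legitimate because the two restrictions are independent, and a product of measure-preserving maps on independent components is measure preserving. Then $Z(s,\cdot)$ is untouched, and the substitution $z\mapsto z+w$ in the denominator turns the numerator into $\bar{\mathcal{G}}(t,0;s,0)$ and the kernel into $p_{t-s}(z+y-x)$, which is \eqref{E:B-shift-shear}. The future-only version \eqref{E:Btilde-shift-shear} is identical and simpler.

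\textbf{Main obstacle.} The delicate point is the shear step. First, the normalization must be exactly preserved: the Jacobian of the shear is $1$, and the heat-kernel ratio of the normalization is absorbed into the definition of $\bar{\mathcal{G}}$, which is precisely the content of the cited Proposition~2.3(iii). Second, I must check the sign convention so that the kernel becomes $p_{t-s}(z+y-x)$ rather than $p_{t-s}(z-y+x)$. I will verify both on the mild equation \eqref{E:barG-mild}. Shearing the bridge kernel $q$ about the pivot $(t,0)$ maps the bridge from $(s,z)$ to $(t,0)$ onto the bridge from $(s,z-w)$ to $(t,0)$, since the linear interpolation term $\frac{u}{t}x$ absorbs exactly the shear drift. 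By uniqueness of the mild solution, this gives the joint field identity. Integrability of all ratios is ensured by Lemma~\ref{L:barG-moments} and Proposition~\ref{P:Z-moments}.
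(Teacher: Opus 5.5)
Your route is the paper's: decompose the denominator via Chapman--Kolmogorov, translate in space, then apply a shear to the noise in $(s,t]$ only, with $Z(s,\cdot)$ held fixed by independence. Your first step translates the whole noise at once, which is legitimate because flat data make $Z(s,\cdot)$ translation-covariant. This is a slightly cleaner version of the paper's conditional translation on $(s,t]$ followed by stationarity of $Z(s,\cdot)$. Your treatment of \eqref{E:Btilde-shift} and \eqref{E:Btilde-shift-shear} is complete.

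There is, however, a bookkeeping error in the shear step for \eqref{E:B-shift-shear}. Apply your product map (shear on $(s,t]$, identity on $[0,s]$) to the ratio in \eqref{E:B-shift}, then substitute $z\mapsto z+w$. This gives
\[
\E\!\left[\frac{\bar{\mathcal{G}}(t,0;s,0)}{\int_{\R} p_{t-s}(z+w)\,\bar{\mathcal{G}}(t,0;s,z)\,Z(s,z+w)\,\ud z}\right],
\qquad w=y-x,
\]
which is not the right side of \eqref{E:B-shift-shear}. The change of variables also moves the argument of $Z(s,\cdot)$, precisely because your map leaves $Z(s,\cdot)$ untouched. So the claim that the substitution ``is \eqref{E:B-shift-shear}'' does not hold as written.

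You need one more step. Since $\{Z(s,u)\}_{u\in\R}$ is spatially stationary and independent of the noise on $(s,t]$, the field $\{(\bar{\mathcal{G}}(t,0;s,z),Z(s,z+w))\}_{z}$ has the same joint law as $\{(\bar{\mathcal{G}}(t,0;s,z),Z(s,z))\}_{z}$. Hence $Z(s,z+w)$ may be replaced by $Z(s,z)$ inside the expectation. Equivalently, replace the identity on $[0,s]$ in your product map by the spatial translation of the $[0,s]$-noise that sends $Z(s,\cdot)$ to $Z(s,\cdot-w)$. This is exactly the paper's closing step (``using again stationarity of $\{Z(s,z)\}$ \dots''). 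The repair uses only tools you already invoke. It does not affect the future-only identities, which are the ones actually used later in Proposition~\ref{P:Btilde-const}.
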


\begin{proof}
	By the Markov property of the SHE,
	\[
		Z(t,x)=\int_{\R}\mathcal{G}(t,x;s,u)\,Z(s,u)\,\ud u
		=\int_{\R} p_{t-s}(x-u)\,\bar{\mathcal{G}}(t,x;s,u)\,Z(s,u)\,\ud u.
	\]
	Thus, by definition \eqref{E:Bflat-def},
	\[
		\E\!\left[\mathcal{B}(t,x;s,y)\right]
		=
		\E\!\left[
			\E_{s,t}\!\left[
				\frac{\bar{\mathcal{G}}(t,x;s,y)}{\int_{\R} p_{t-s}(x-u)\,\bar{\mathcal{G}}(t,x;s,u)\,Z(s,u)\,\ud u}
				\right]
			\right].
	\]
	By translation invariance of the space--time white noise on $(s,t]$, the
	$\E_{s,t}$-expectation is unchanged if we shift the spatial coordinates by
	$-x$. Using also the symmetry of $p_{t-s}$ and changing variables $z=u-x$, we
	obtain
	\[
		\E_{s,t}\!\left[
			\frac{\bar{\mathcal{G}}(t,x;s,y)}{\int_{\R} p_{t-s}(x-u)\,\bar{\mathcal{G}}(t,x;s,u)\,Z(s,u)\,\ud u}
			\right]
		=
		\E_{s,t}\!\left[
			\frac{\bar{\mathcal{G}}(t,0;s,y-x)}{\int_{\R} p_{t-s}(z)\,\bar{\mathcal{G}}(t,0;s,z)\,Z(s,z+x)\,\ud z}
			\right].
	\]
	Since the field $\{Z(s,z+x)\}_{z\in\R}$ is $\mathcal{F}_s$-measurable, it is
	independent of the noise on $(s,t]$. Hence the inner $\E_{s,t}$-expectation
	is a measurable functional of $\{Z(s,z+x)\}_{z\in\R}$. By spatial
	stationarity of $\{Z(s,z)\}_{z\in\R}$ under flat initial data, the outer
	expectation is unchanged if we replace $\{Z(s,z+x)\}_{z\in\R}$ by
	$\{Z(s,z)\}_{z\in\R}$ in law. This yields \eqref{E:B-shift}.

	Next, set $a\coloneqq y-x$. By the shear invariance
	in~\cite{alberts.janjigian.ea:22:greens}*{Proposition~2.3(iii)} (applied to
	the normalized Green's function, with $r=t$), under $\E_{s,t}$ the process
	$z\mapsto \bar{\mathcal{G}}(t,0;s,z)$ is stationary in $z$. Therefore the
	ratio inside the previous display has the same law as
	\[
		\frac{\bar{\mathcal{G}}(t,0;s,0)}{\int_{\R} p_{t-s}(z)\,\bar{\mathcal{G}}(t,0;s,z-a)\,Z(s,z+x)\,\ud z}
		=
		\frac{\bar{\mathcal{G}}(t,0;s,0)}{\int_{\R} p_{t-s}(z+a)\,\bar{\mathcal{G}}(t,0;s,z)\,Z(s,z+y)\,\ud z},
	\]
	where we changed variables $z\mapsto z-a$. Using again stationarity of
	$\{Z(s,z)\}_{z\in\R}$ in the spatial variable (and its independence from the
	noise on $(s,t]$), we may replace $Z(s,z+y)$ by $Z(s,z)$ in distribution,
	yielding \eqref{E:B-shift-shear}.

	The proof of \eqref{E:Btilde-shift} is the same, with $Z(s,\cdot)$ replaced by $1$, since
	$\widetilde Z(t,x;s)=\int_{\R} p_{t-s}(x-u)\,\bar{\mathcal{G}}(t,x;s,u)\,\ud u$.
	The shear-reduced form \eqref{E:Btilde-shift-shear} follows by the same stationarity argument as above.
\end{proof}

\begin{remark}
	Lemma~\ref{L:B-shift} is the flat-initial-data counterpart of
	\cite{gu.pu:25:spatial}*{Lemma~2.1}, which treats the narrow-wedge setting. In
	their case the denominator is the full Green's function
	$\bar{\mathcal{G}}(t,x;0,0)$ and the convolution kernel is a Brownian-bridge
	density $p_{s(t-s)/t}$; here, the denominator is the flat partition function
	$Z(t,x)$ (resp.\ $\widetilde Z(t,x;s)$) and the kernel is the heat kernel
	$p_{t-s}$. The proof strategy---translation stationarity followed by shear
	invariance---is the same in both cases. The identity is recorded to emphasize
	that both denominators $Z(t,x)$ and $\widetilde Z(t,x;s)$ lead to natural
	``one-point'' averages.
\end{remark}

\begin{lemma}[One-sided reduction to
		$\widetilde{\mathcal{B}}$]\label{L:B-tilde-mixed}
	Fix $t>0$ and $\varepsilon \in (0,2).$ As $|x| \to \infty$,
	\begin{equation} \label{E:c-tBB}
		\begin{aligned}
			\Cov[h(t,x),h(t,0)] =  \int_{0}^{|x|^{-2+\varepsilon}}\int_{\R}  \E\!\left[ \widetilde{\mathcal{B}}(t,x;s,y) \right] \, \E\!\left[\mathcal{B}(t,0;s,y)\right] \,
			K(t,s,x,y)\,\ud y\,\ud s \\
			+ o\!\left(|x|^{-2}\exp\!\left(-\frac{x^2}{4t}\right)\right).
		\end{aligned}
	\end{equation}
\end{lemma}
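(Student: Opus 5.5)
We start from the boundary-layer reduction \eqref{E:r-BB}, which already expresses the covariance as the integral of $\E[\mathcal{B}(t,x;s,y)\mathcal{B}(t,0;s,y)]K(t,s,x,y)$ over $s\in(0,|x|^{-2+\varepsilon})$, up to an $o(|x|^{-2}e^{-x^2/(4t)})$ error. The task is to replace the $x$-factor $\mathcal{B}(t,x;s,y)$ by the deterministic quantity $\widetilde{\mathcal{B}}(t,x;s,y)$, which equals its own expectation, while keeping the $0$-factor random and then taking its expectation. We take $|x|$ large enough that $L_x\coloneqq|x|^{-2+\varepsilon}\le t\wedge 1$, so that Lemmas~\ref{L:B-s-r} and \ref{L:key-a-to-Btilde} apply for every $s\in(0,L_x]$.

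For each such $s$ and every $y$ we decompose
\[
	\E[\mathcal{B}_x\mathcal{B}_0]-\widetilde{\mathcal{B}}_x\,\E[\mathcal{B}_0]
	=\E\!\left[(\mathcal{B}_x-\widetilde{\mathcal{B}}_x)\mathcal{B}_0\right],
\]
using the notation of the proof of Lemma~\ref{L:key-a-to-Btilde}. This holds because $\widetilde{\mathcal{B}}_x$ is deterministic, so $\widetilde{\mathcal{B}}_x=\E[\widetilde{\mathcal{B}}_x]$ and $\E[\widetilde{\mathcal{B}}_x\mathcal{B}_0]=\widetilde{\mathcal{B}}_x\E[\mathcal{B}_0]$. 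By Cauchy--Schwarz, Lemma~\ref{L:B-s-r} and the uniform bound \eqref{E:B-unif-L2}, the right-hand side is bounded in absolute value by $C_t s^{1/4}$, uniformly in $x,y$.

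Integrating this error against $K$ over $s\in(0,L_x)$ and $y\in\R$ gives at most
\[
	C_t\int_0^{L_x}s^{1/4}p_{2(t-s)}(x)\,\ud s\le C_t L_x^{1/4}\,I_{\mathrm{flat}}(t,x).
\]
This is exactly the estimate used in Lemma~\ref{L:Bcov0}. By \eqref{E:Iflat-asymp}, and since $L_x^{1/4}=|x|^{(-2+\varepsilon)/4}\to0$, the error is $o(|x|^{-2}e^{-x^2/(4t)})$. Combining this with \eqref{E:r-BB} yields \eqref{E:c-tBB}.

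The only point needing care is that all constants are uniform in $(s,y)$ over the full $y$-range. Only then can the $y$-integral of $K$ collapse to $p_{2(t-s)}(x)$. This uniformity is provided by Lemmas~\ref{L:barG-moments} and \ref{L:B-s-r} and Proposition~\ref{P:Z-moments}, so no genuine obstacle is expected. One could alternatively derive the statement from Lemma~\ref{L:key-a-to-Btilde} together with a one-factor version for $\E[\mathcal{B}_0]-\widetilde{\mathcal{B}}_0$, but the direct decomposition above is shorter.
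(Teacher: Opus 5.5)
Your proof is correct. It uses the same ingredients as the paper: Lemma~\ref{L:B-s-r}, the $s^{1/4}$-weighted integration of $K$ from Lemma~\ref{L:Bcov0}, and \eqref{E:Iflat-asymp}. The decomposition, however, differs in a way that matters.

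Write $\mathcal{B}_x=\mathcal{B}(t,x;s,y)$, $\mathcal{B}_0=\mathcal{B}(t,0;s,y)$ and $\widetilde{\mathcal{B}}_x=\widetilde{\mathcal{B}}(t,x;s,y)$. The paper's written argument bounds $\bigl|\E[\mathcal{B}_x]-\widetilde{\mathcal{B}}_x\bigr|\,\E[\mathcal{B}_0]$. That only controls the passage from $\E[\mathcal{B}_x]\,\E[\mathcal{B}_0]$ to $\widetilde{\mathcal{B}}_x\,\E[\mathcal{B}_0]$. The integrand in \eqref{E:r-BB} is instead $\E[\mathcal{B}_x\mathcal{B}_0]$, and $\mathcal{B}_x,\mathcal{B}_0$ are both $\mathcal{F}_s$-measurable and in general correlated. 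So the paper tacitly uses $\E[\mathcal{B}_x\mathcal{B}_0]=\E[\mathcal{B}_x]\,\E[\mathcal{B}_0]$, which is not justified as written.

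Your identity $\E[\mathcal{B}_x\mathcal{B}_0]-\widetilde{\mathcal{B}}_x\,\E[\mathcal{B}_0]=\E[(\mathcal{B}_x-\widetilde{\mathcal{B}}_x)\mathcal{B}_0]$ handles this correlation directly. Cauchy--Schwarz, Lemma~\ref{L:B-s-r} and \eqref{E:B-unif-L2} then bound it by $C_t s^{1/4}$, uniformly in $(x,y)$. Equivalently, the term the paper omits, $\Cov(\mathcal{B}_x,\mathcal{B}_0)=\Cov(\mathcal{B}_x-\widetilde{\mathcal{B}}_x,\mathcal{B}_0)$, is $O(s^{1/4})$ by exactly your step. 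Your version therefore gives a complete proof where the paper's leaves a (minor, easily fixed) gap.

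The only extra input you need is the uniform $L^2$ bound on $\mathcal{B}_0$ itself rather than on its mean. \eqref{E:B-unif-L2} already supplies this uniformly in $(s,x,y)$.
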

\begin{proof}
	By Lemma~\ref{L:B-s-r}, we have
	\[
		\begin{split}
			 & \int_{0}^{|x|^{-2+\varepsilon}}\int_{\mathbb{R}}
			\Big|
			\mathbb{E}\!\left[ \mathcal{B}(t,x;s,y) \right]
			-
			\mathbb{E}\!\left[ \widetilde{\mathcal{B}}(t,x;s,y) \right]
			\Big|
			\, \mathbb{E}\!\left[\mathcal{B}(t,0;s,y)\right]
			\, K(t,s,x,y)\, \ud y\,\ud s                        \\
			 & \lesssim
			\int_{0}^{|x|^{-2+\varepsilon}}\int_{\mathbb{R}}
			s^{1/4} \, K(t,s,x,y)\, \ud y\,\ud s
			\le
			|x|^{(-2+\varepsilon)/4}
			\int_{0}^{|x|^{-2+\varepsilon}}\int_{\mathbb{R}}
			K(t,s,x,y)\, \ud y\,\ud s.
		\end{split}
	\]
	This implies that \eqref{E:c-tBB} holds by the same arguments used in the
	proof of Lemma~\ref{L:Bcov0}.
\end{proof}

Similarly, by applying Lemma~\ref{L:B-s-r} also at $x=0$ and repeating the
argument in Lemma~\ref{L:B-tilde-mixed}, we can reduce both factors.

\begin{lemma}[Two-sided reduction to
		$\widetilde{\mathcal{B}}$]\label{L:B-tilde-both}
	Fix $t>0$ and $\varepsilon \in (0,2)$. As $|x| \to \infty$,
	\begin{equation}\label{E:c-tBB1}
		\begin{aligned}
			\Cov[h(t,x),h(t,0)]
			=
			\int_{0}^{|x|^{-2+\varepsilon}}\int_{\mathbb{R}}
			\mathbb{E}\!\left[ \widetilde{\mathcal{B}}(t,x;s,y) \right]\,
			\mathbb{E}\!\left[ \widetilde{\mathcal{B}}(t,0;s,y) \right]\,
			K(t,s,x,y)\, \ud y\,\ud s \\
			+
			o\!\left(|x|^{-2}\exp\!\left(-\frac{x^2}{4t}\right)\right).
		\end{aligned}
	\end{equation}
\end{lemma}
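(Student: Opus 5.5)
Starting from Lemma~\ref{L:B-tilde-mixed}, we replace the remaining factor $\E[\mathcal{B}(t,0;s,y)]$ by $\widetilde{\mathcal{B}}(t,0;s,y)$ and control the error in the same way. Note that $\widetilde{\mathcal{B}}$ is deterministic, so $\E[\widetilde{\mathcal{B}}]=\widetilde{\mathcal{B}}$. The difference between the integrals in \eqref{E:c-tBB} and \eqref{E:c-tBB1} is
\[
	\int_{0}^{|x|^{-2+\varepsilon}}\int_{\R}
	\widetilde{\mathcal{B}}(t,x;s,y)\,
	\Big(\E[\mathcal{B}(t,0;s,y)]-\widetilde{\mathcal{B}}(t,0;s,y)\Big)\,
	K(t,s,x,y)\,\ud y\,\ud s .
\]

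We bound the factors one at a time. The first factor $\widetilde{\mathcal{B}}(t,x;s,y)$ is bounded uniformly in $(s,x,y)$ by \eqref{E:Btilde-unif}. For the second factor, apply Jensen and then Lemma~\ref{L:B-s-r} with the spatial point $0$ in place of $x$:
\[
	\big|\E[\mathcal{B}(t,0;s,y)]-\widetilde{\mathcal{B}}(t,0;s,y)\big|\le \Norm{\mathcal{B}(t,0;s,y)-\widetilde{\mathcal{B}}(t,0;s,y)}_2\le C_t s^{1/4},
\]
uniformly in $y$. This applies for $s\in(0,t\wedge 1]$, which contains $(0,|x|^{-2+\varepsilon}]$ once $|x|$ is large. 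Since $K\ge 0$, the absolute value of the difference is at most
\[
	C_t\int_0^{L_x}\int_{\R} s^{1/4}K(t,s,x,y)\,\ud y\,\ud s\le C_t L_x^{1/4}\, I_{\mathrm{flat}}(t,x),
	\qquad L_x\coloneqq |x|^{-2+\varepsilon}.
\]

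By \eqref{E:Iflat-asymp}, $I_{\mathrm{flat}}(t,x)=O(|x|^{-2}e^{-x^2/(4t)})$, and $L_x^{1/4}\to0$ because $\varepsilon<2$. The difference is therefore $o(|x|^{-2}e^{-x^2/(4t)})$. Combining this with \eqref{E:c-tBB} gives \eqref{E:c-tBB1}.

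The only point needing care is that every bound is uniform in $y$ and in $s$ on the boundary layer. Lemma~\ref{L:B-s-r} and \eqref{E:Btilde-unif} already supply this uniformity, so no step here is a real obstacle.
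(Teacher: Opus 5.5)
Your proposal is correct and follows the paper's proof: both start from Lemma~\ref{L:B-tilde-mixed}, apply Lemma~\ref{L:B-s-r} at the spatial point $0$ to get an $O(s^{1/4})$ error uniformly in $y$, and absorb it via $L_x^{1/4}\, I_{\mathrm{flat}}(t,x)=o\big(|x|^{-2}e^{-x^2/(4t)}\big)$. Your explicit appeal to \eqref{E:Btilde-unif} for the bounded factor $\widetilde{\mathcal{B}}(t,x;s,y)$ only spells out what the paper leaves implicit in its $\lesssim$.
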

\begin{proof}
	Starting from Lemma~\ref{L:B-tilde-mixed} and using Lemma~\ref{L:B-s-r} at
	$x=0$, we have
	\[
		\begin{split}
			 & \int_{0}^{|x|^{-2+\varepsilon}}\int_{\mathbb{R}}
			\Big|
			\mathbb{E}\!\left[ \mathcal{B}(t,0;s,y) \right]
			-
			\mathbb{E}\!\left[ \widetilde{\mathcal{B}}(t,0;s,y) \right]
			\Big|
			\, \mathbb{E}\!\left[\widetilde{\mathcal{B}}(t,x;s,y)\right]
			\, K(t,s,x,y)\, \ud y\,\ud s                        \\
			 & \lesssim
			\int_{0}^{|x|^{-2+\varepsilon}}\int_{\mathbb{R}}
			s^{1/4} \, K(t,s,x,y)\,\ud y\,\ud s
			\le
			|x|^{(-2+\varepsilon)/4}
			\int_{0}^{|x|^{-2+\varepsilon}}\int_{\mathbb{R}}
			K(t,s,x,y)\,\ud y\,\ud s,
		\end{split}
	\]
	which is $o\!\left(|x|^{-2}\exp\!\left(-x^2/(4t)\right)\right)$ by the same
	argument as in Lemma~\ref{L:B-tilde-mixed} (using the boundary asymptotic for
	$I_{\mathrm{flat}}$). This yields \eqref{E:c-tBB1}.
\end{proof}
We will use Lemma~\ref{L:B-tilde-both} (i.e., \eqref{E:c-tBB1}) to reduce the
covariance asymptotics to the study of the deterministic kernel $K$ with the
future-only factors $\widetilde{\mathcal{B}}$.

\subsection{Boundary-layer reduction}\label{SS:boundary-layer-reduction}

The next result formalizes the boundary-layer heuristic that already appears in the
narrow-wedge analysis of Gu--Pu \cite{gu.pu:25:spatial}: the large-$|x|$ behavior of
$\Cov[h(t,x),h(t,0)]$ is controlled by times $s\lesssim |x|^{-2+\varepsilon}$ and by the
behavior of the boundary-layer factor $\E[\mathcal{B}(t,x;s,y)\mathcal{B}(t,0;s,y)]$
in the window $|y-x/2|=O(1)$.

\begin{theorem}[Reduction to a boundary-layer constant]\label{thm:c-redct}
	Fix $t>0$, and assume
	that for every $M>0$ there exists $\varepsilon\in(0,2)$ and a constant $C(t)$ such that
	\begin{equation}\label{E:key-a}
		\lim_{|x|\to\infty}\ \sup_{\substack{0<s\le |x|^{-2+\varepsilon}\\ |y-x/2|\le M}}
		\left|\E\!\left[\mathcal{B}(t,x;s,y)\,\mathcal{B}(t,0;s,y)\right]-C(t)\right|=0.
	\end{equation}
	Then
	\[
		\Cov[h(t,x),h(t,0)]
		\sim C(t)\,I_{\mathrm{flat}}(t,x),
		\qquad |x|\to\infty,
	\]
	that is, \eqref{conj:kpz-cov} holds with $\kappa(t)=C(t)$.
\end{theorem}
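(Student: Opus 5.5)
The plan is to start from the representation \eqref{E:r-BB}, which already isolates the window $0<s\le L_x\coloneqq |x|^{-2+\varepsilon}$. The remainder there is $o(|x|^{-2}e^{-x^2/(4t)})$, and by \eqref{E:Iflat-asymp} this is $o(I_{\mathrm{flat}}(t,x))$. Here $\varepsilon$ is the exponent supplied by hypothesis \eqref{E:key-a}; since Lemmas~\ref{L:t-t-x2} and~\ref{L:Bcov0} hold for every $\varepsilon\in(0,2)$, we may use that choice.

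Write $F(s,y)\coloneqq \E[\mathcal{B}(t,x;s,y)\mathcal{B}(t,0;s,y)]$ and split the $y$-integral over $\{|y-x/2|\le M\}$ and its complement. For the complement, H\"older gives
\[
\sup_{s,x,y}|F(s,y)|\le C_t
\]
by \eqref{E:B-unif-L2}. It then suffices to bound the $K$-mass off the window. The key identity is
\[
p_{\tau}(x-y)p_\tau(y)=p_{2\tau}(x)\,p_{\tau/2}(y-x/2), \qquad \tau=t-s .
\]
Hence the mass of $K$ outside the window is at most $p_{2(t-s)}(x)\,\P(|N|>M/\sqrt{(t-s)/2})$. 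For $s\le L_x\le t/2$, this probability is bounded by $\delta(M)\coloneqq \P(|N|>M/\sqrt{t})$, uniformly in $s$. So the outside contribution is at most $C_t\,\delta(M)\,I_{\mathrm{flat}}(t,x)$.

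Inside the window, hypothesis \eqref{E:key-a} gives
\[
F(s,y)=C(t)+o(1)
\]
uniformly in $(s,y)$. The inside contribution is therefore
\[
(C(t)+o(1))\int_0^{L_x}p_{2(t-s)}(x)\,\P\bigl(|N|\le M/\sqrt{(t-s)/2}\bigr)\,\ud s .
\]
This lies between $(1-\delta(M))\int_0^{L_x}p_{2(t-s)}(x)\,\ud s$ and $\int_0^{L_x}p_{2(t-s)}(x)\,\ud s$, up to the factor $C(t)+o(1)$. Finally, Lemma~\ref{L:t-t-x2}, applied with $\mathcal{X}$ replaced by $1$ (its proof only uses boundedness), shows that $\int_0^{L_x}p_{2(t-s)}(x)\,\ud s=I_{\mathrm{flat}}(t,x)(1+o(1))$.

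Putting the pieces together,
\[
\limsup_{|x|\to\infty}\left|\frac{\Cov[h(t,x),h(t,0)]}{I_{\mathrm{flat}}(t,x)}-C(t)\right|\le (C_t+|C(t)|)\,\delta(M).
\]
Letting $M\to\infty$ finishes the proof.

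The only genuine subtlety is the dependence of $\varepsilon$ on $M$ in the hypothesis. For each fixed $M$, the whole argument is run with the corresponding $\varepsilon=\varepsilon(M)$, and every estimate above holds for any $\varepsilon\in(0,2)$, so the limsup bound holds for each $M$. The other step to check carefully is the uniform Gaussian tail of the window mass. It is uniform because $t-s\in[t/2,t]$ on the boundary layer, so the bridge variance $(t-s)/2$ stays bounded.
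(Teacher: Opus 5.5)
Your proposal is correct and follows the paper's proof essentially step by step. Like the paper, you start from \eqref{E:r-BB}, split the $y$-integral at $|y-x/2|\le M$, and bound the off-window part via \eqref{E:B-unif-L2} and the factorization $K=p_{2(t-s)}(x)\,p_{(t-s)/2}(y-x/2)$; you then apply \eqref{E:key-a} on the window and show the truncated $K$-mass equals $I_{\mathrm{flat}}(t,x)(1+o(1))$, and your explicit $\limsup$ bound for fixed $M$ followed by $M\to\infty$ handles the dependence of $\varepsilon$ on $M$ more cleanly than the paper's ``$\eta>0$ is arbitrary'' step.
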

\begin{proof}[Proof of Theorem~\ref{thm:c-redct}]
	Fix $M>0$, and let $\varepsilon\in(0,2)$ and $C(t)$ satisfy \eqref{E:key-a}.
	Write
	$L_x\coloneqq |x|^{-2+\varepsilon}$, so that \eqref{E:r-BB} holds for this choice of $\varepsilon\in(0,2)$.
	On $(0,L_x)$, split the $y$-integral on \eqref{E:r-BB} into $\{|y-x/2|\le M\}$ and its
	complement.  By \eqref{E:key-a} and the nonnegativity of $K$,
	\[
		\int_0^{L_x}\int_{|y-x/2|\le M}
		\big(\E\!\left[\mathcal{B}(t,x;s,y)\,\mathcal{B}(t,0;s,y)\right]-C(t)\big)K(t,s,x,y)\ud y\,\ud s
		=o(1)\int_0^{L_x}\int_{\R}K(t,s,x,y)\ud y\,\ud s.
	\]
	For the tail $\{|y-x/2|>M\}$, use Cauchy--Schwarz together with \eqref{E:B-unif-L2} to get
	$|\E[\mathcal{B}(t,x;s,y)\,\mathcal{B}(t,0;s,y)]|\le C_t$, and hence
	\[
		\left|\int_0^{L_x}\int_{|y-x/2| > M}
		\big(\E\!\left[\mathcal{B}(t,x;s,y)\,\mathcal{B}(t,0;s,y)\right]-C(t)\big)K(t,s,x,y)\ud y\,\ud s \right| \le C\int_0^{L_x}\int_{|y-x/2|> M}K(t,s,x,y)\ud y\,\ud s.
	\]
	Finally, note that for $a\coloneqq t-s$ one has the exact identity
	\[
		K(t,s,x,y)=p_a(x-y)p_a(y)=p_{2a}(x)\,p_{a/2}\!\left(y-\frac{x}{2}\right).
	\]
	In particular,
	\[
		\int_{|y-x/2|>M}K(t,s,x,y)\,\ud y
		=p_{2a}(x)\int_{|u|>M}p_{a/2}(u)\,\ud u
		\le p_{2a}(x)\int_{|u|>M}p_{t/2}(u)\,\ud u,
	\]
	while $\int_{\R}K(t,s,x,y)\,\ud y=p_{2a}(x)$.
	Hence, by choosing $M$ large enough that $\int_{|u|>M}p_{t/2}(u)\,\ud u\le \eta$,
	the right-hand side in the previous display can be made $\le \eta$ times
	$\int_0^{L_x}\int_{\R}K(t,s,x,y)\,\ud y\,\ud s$ (uniformly in $x$).
	Since $\eta>0$ is arbitrary, the tail contribution is $o(1)$ times
	$\int_0^{L_x}\int_{\R}K(t,s,x,y)\,\ud y\,\ud s$. Combining this with the local estimate from \eqref{E:key-a} yields
	\[
		\Cov[h(t,x),h(t,0)]
		=
		C(t)\int_0^{L_x}\int_{\R}K(t,s,x,y)\,\ud y\,\ud s
		+o(1)\int_0^{L_x}\int_{\R}K(t,s,x,y)\,\ud y\,\ud s.
	\]
	Finally,
	\[
		\int_0^{L_x}\int_{\R}K(t,s,x,y)\,\ud y\,\ud s
		=
		I_{\mathrm{flat}}(t,x)-\int_{L_x}^t p_{2(t-s)}(x)\,\ud s.
	\]
	For $s\in[L_x,t)$ one has $t-s\le t-L_x$, hence
	$p_{2(t-s)}(x)\le C_t\exp\!\big(-x^2/(4(t-L_x))\big)
		=C_t\exp\!\big(-x^2/(4t)\big)\exp\!\big(-c_t x^2 L_x\big)$.
	Since $x^2 L_x=|x|^{\varepsilon}\to\infty$, the tail integral is
	$o\!\big(|x|^{-2}\exp(-x^2/(4t))\big)$, and therefore
	$\int_0^{L_x}\int_{\R}K(t,s,x,y)\,\ud y\,\ud s=I_{\mathrm{flat}}(t,x)\big(1+o(1)\big)$
	by \eqref{E:Iflat-asymp}.
	This completes the proof.
\end{proof}

To complete the reduction in Theorem~\ref{thm:c-redct} and hence prove
Theorem~\ref{thm:flat-main}, it remains to establish the boundary-layer estimate
\eqref{E:key-a}. We prove it via the future-only profile
$\widetilde{\mathcal{B}}$ and shear mixing.

\subsection{Boundary-layer estimate via the future-only profile}\label{SS:future-only}

\paragraph{Future-only profile and boundary-layer constant.} Recall the
future-only partition function $\widetilde Z(t,x;s)$ from~\eqref{E:Ztilde} and
the deterministic factor $\widetilde{\mathcal{B}}(t,x;s,y)$
from~\eqref{E:Btilde}. By translation invariance of the noise, it depends only
on the time increment $t-s$ and the spatial displacement $x-y$: there exists a
deterministic function $\widetilde b_{\tau}(\cdot)$ such that
\begin{equation}\label{E:Btilde-profile}
	\widetilde{\mathcal{B}}(t,x;s,y)=\widetilde b_{t-s}(x-y),\qquad 0<s<t,\ x,y\in\R.
\end{equation}
(One may verify \eqref{E:Btilde-profile} by shifting space by $-y$ and time by
$-s$ in the definition above.)

The two-sided reduction Lemma~\ref{L:B-tilde-both} therefore yields, in the
boundary-layer regime,
\begin{equation}\label{E:cov-Btilde-profile}
	\Cov[h(t,x),h(t,0)]
	=
	\int_{0}^{|x|^{-2+\varepsilon}}\!\!\int_{\R}
	\widetilde b_{t-s}(x-y)\,\widetilde b_{t-s}(-y)\,
	K(t,s,x,y)\,\ud y\,\ud s
	\;+\;
	o\!\left(|x|^{-2}e^{ -\frac{x^2}{4t} }\right).
\end{equation}
In view of the identity $K(t,s,x,y)=p_{2(t-s)}(x)\,p_{(t-s)/2}(y-x/2)$, the
$y$-integral in \eqref{E:cov-Btilde-profile} is concentrated on $|y-x/2|\lesssim
	1$.

\medskip

The following is the natural ``future-only'' target, analogous in spirit to the
key technical estimate of Gu--Pu (their
Proposition~\cite{gu.pu:25:spatial}*{Proposition~3.1}), but adapted to the flat
initial profile.

\begin{proposition}[Flat covariance constant from the future-only
		profile]\label{P:flat-constant-from-Btilde}
	Fix $t>0$. Assume there exists a constant $\kappa_\ast(t)\in(0,\infty)$ such
	that for every $M>0$ there exists $\varepsilon\in(0,2)$ for which
	\begin{equation}\label{E:Btilde-boundary-layer-const}
		\lim_{|x|\to\infty}\ \sup_{\substack{0<s\le |x|^{-2+\varepsilon}\\ |y-x/2|\le M}}
		\Big(
		\left|\widetilde{\mathcal{B}}(t,x;s,y)-\kappa_\ast(t)\right|
		+
		\left|\widetilde{\mathcal{B}}(t,0;s,y)-\kappa_\ast(t)\right|
		\Big)=0.
	\end{equation}
	Then the flat covariance asymptotic \eqref{E:flat-goal} holds with
	$\kappa(t)=\kappa_\ast(t)^2$.
\end{proposition}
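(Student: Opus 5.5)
The plan is to start from the two-sided reduction, Lemma~\ref{L:B-tilde-both}. Since $\widetilde{\mathcal{B}}$ is deterministic, this lemma (in the form \eqref{E:cov-Btilde-profile}) expresses the covariance, up to $o(|x|^{-2}e^{-x^2/(4t)})$, as
\[
\int_0^{L_x}\int_{\R}\widetilde{\mathcal{B}}(t,x;s,y)\,\widetilde{\mathcal{B}}(t,0;s,y)\,K(t,s,x,y)\,\ud y\,\ud s ,
\qquad L_x\coloneqq |x|^{-2+\varepsilon}.
\]
One point needs care: the $\varepsilon$ in Lemma~\ref{L:B-tilde-both} is arbitrary in $(0,2)$. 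Given $M$, I will take the $\varepsilon=\varepsilon(M)$ supplied by \eqref{E:Btilde-boundary-layer-const} and apply Lemma~\ref{L:B-tilde-both} with that same $\varepsilon$. The rest of the argument then copies the proof of Theorem~\ref{thm:c-redct}, with the random product $\E[\mathcal{B}\mathcal{B}]$ replaced by the deterministic product $\widetilde{\mathcal{B}}\widetilde{\mathcal{B}}$ and $C(t)$ replaced by $\kappa_\ast(t)^2$.

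\textbf{Local window.} On $\{|y-x/2|\le M\}$ I use the elementary bound
\[
|ab-\kappa_\ast^2|\le |a-\kappa_\ast|\,|b|+\kappa_\ast\,|b-\kappa_\ast| .
\]
Combined with the uniform bound \eqref{E:Btilde-unif} on $|\widetilde{\mathcal{B}}|$, hypothesis \eqref{E:Btilde-boundary-layer-const} gives
\[
\sup_{0<s\le L_x,\ |y-x/2|\le M}\bigl|\widetilde{\mathcal{B}}(t,x;s,y)\,\widetilde{\mathcal{B}}(t,0;s,y)-\kappa_\ast(t)^2\bigr|\to 0 .
\]
Since $K\ge0$, the error on this window is $o(1)\int_0^{L_x}\int_{\R}K\,\ud y\,\ud s$.

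\textbf{Tail region.} On $\{|y-x/2|>M\}$ the integrand $\widetilde{\mathcal{B}}\widetilde{\mathcal{B}}-\kappa_\ast^2$ is bounded by a constant $C_t$, again by \eqref{E:Btilde-unif}. By the factorization $K=p_{2a}(x)\,p_{a/2}(y-x/2)$ with $a=t-s$, the tail is at most $C_t\,\eta(M)\int_0^{L_x}p_{2(t-s)}(x)\,\ud s$, where $\eta(M)\coloneqq\int_{|u|>M}p_{t/2}(u)\,\ud u$.

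\textbf{Conclusion.} Putting the two regions together gives
\[
\Cov[h(t,x),h(t,0)]=\bigl(\kappa_\ast(t)^2+o(1)+O(\eta(M))\bigr)\int_0^{L_x}\int_{\R}K\,\ud y\,\ud s+o\bigl(|x|^{-2}e^{-x^2/(4t)}\bigr).
\]
The end of the proof of Theorem~\ref{thm:c-redct} shows that $\int_0^{L_x}\int_{\R}K=I_{\mathrm{flat}}(t,x)(1+o(1))$, and \eqref{E:Iflat-asymp} then absorbs the additive error. Taking $\limsup$ and $\liminf$ of the ratio to $I_{\mathrm{flat}}(t,x)$ as $|x|\to\infty$, and then letting $M\to\infty$, yields $\Cov\sim\kappa_\ast(t)^2 I_{\mathrm{flat}}(t,x)$. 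That is \eqref{E:flat-goal} with $\kappa(t)=\kappa_\ast(t)^2$.

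\textbf{Main obstacle.} The only delicate point is the dependence of $\varepsilon$ on $M$. A different $M$ may come with a different $\varepsilon$, and hence a different cutoff $L_x$. This is harmless because the reduction lemmas and the comparison $\int_0^{L_x}K\sim I_{\mathrm{flat}}$ hold for every $\varepsilon\in(0,2)$, and the limit constant $\kappa_\ast(t)^2$ does not depend on $M$. So for each fixed $M$ the ratio lies within $O(\eta(M))$ of $\kappa_\ast^2$ in the limit, which is enough.

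Identifying $\kappa_\ast(t)=\E[Z(t,0)^{-1}]$ is not part of this statement. That step would use the shear form \eqref{E:Btilde-shift-shear} with $|y-x|\approx|x|/2\to\infty$, Lemma~\ref{L:shear-mixing}, and Lemma~\ref{L:Z-inv-cont}.
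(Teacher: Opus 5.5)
Your proposal is correct and follows the paper's route: substitute the hypothesis into the two-sided reduction \eqref{E:cov-Btilde-profile} and integrate out $K$ using $\int_{\R}K(t,s,x,y)\,\ud y=p_{2(t-s)}(x)$. Your explicit handling of the region $|y-x/2|>M$ (via the factorization of $K$ and the bound \eqref{E:Btilde-unif}) and of the dependence $\varepsilon=\varepsilon(M)$ supplies details that the paper's terse proof leaves implicit, mirroring the proof of Theorem~\ref{thm:c-redct}.
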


\begin{proof}
	Under~\eqref{E:Btilde-boundary-layer-const}, one has $\widetilde
		b_{t-s}(x-y)\widetilde b_{t-s}(-y)=\kappa_\ast(t)^2+o(1)$ uniformly on the
	boundary layer, since $t-s\to t$ uniformly over $0<s\le |x|^{-2+\varepsilon}$.
	Plugging this into \eqref{E:cov-Btilde-profile} and using
	$\int_{\R}K(t,s,x,y)\,\ud y=p_{2(t-s)}(x)$ yields
	\[
		\Cov[h(t,x),h(t,0)]=\kappa_\ast(t)^2\,I_{\mathrm{flat}}(t,x)\big(1+o(1)\big),
	\]
	hence \eqref{E:flat-goal}.
\end{proof}

\begin{proposition}[Boundary-layer constant for
		$\widetilde{\mathcal{B}}$]\label{P:Btilde-const}
	Fix $t>0$, and set
	\[
		\kappa_\ast(t)\coloneqq \E\!\left[Z(t,0)^{-1}\right]\in(1,\infty).
	\]
	Then for every $M>0$ and every $\varepsilon\in(0,2)$,
	\eqref{E:Btilde-boundary-layer-const} holds with this choice of
	$\kappa_\ast(t)$.
\end{proposition}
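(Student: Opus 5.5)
We start from the shear-reduced form \eqref{E:Btilde-shift-shear} in Lemma~\ref{L:B-shift}. Write $a\coloneqq y-x$ for the first factor and $a\coloneqq y$ for the second; on the window $|y-x/2|\le M$ both satisfy $|a|\ge |x|/2-M\to\infty$. Then
\[
	\widetilde{\mathcal{B}}(t,x;s,y)=\E\!\left[\frac{\bar{\mathcal{G}}(t,0;s,0)}{D_{s}(a)}\right],
	\qquad
	D_s(a)\coloneqq \int_{\R}p_{t-s}(z+a)\,\bar{\mathcal{G}}(t,0;s,z)\,\ud z .
\]
The heat kernel $p_{t-s}(\cdot+a)$ centres $z$ near $-a$, so $D_s(a)$ is built from $\bar{\mathcal{G}}$ at spatial locations of order $|x|/2$ away from the point $z=0$ where the numerator is evaluated. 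By translation invariance and the shear invariance of \cite{alberts.janjigian.ea:22:greens}*{Proposition~2.3(iii)}, $D_s(a)$ has the law of $\widetilde Z(t,0;s)\overset{d}{=}Z(t-s,0)$. Moreover, it is obtained from a fixed functional $D_s(0)$ by composing with a shear automorphism $\theta_{t,\nu}$ with $\nu=a/(t-s)$, which pivots at time $t$ and moves the starting point at time $s$ by $a$. The plan is to show that, uniformly over the window,
\[
	\E\!\left[\bar{\mathcal{G}}(t,0;s,0)\,D_s(a)^{-1}\right]-\E[\bar{\mathcal{G}}(t,0;s,0)]\,\E[D_s(a)^{-1}]\to0,
\]
and then to use $\E[\bar{\mathcal{G}}]=1$ together with $\E[D_s(a)^{-1}]=\E[Z(t-s,0)^{-1}]\to\E[Z(t,0)^{-1}]$ from Lemma~\ref{L:Z-inv-cont}.

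\textbf{Step 1 (freezing $s$).} We first remove the dependence on $s\in(0,|x|^{-2+\varepsilon}]$. Compare $\bar{\mathcal{G}}(t,0;s,\cdot)$ with $\bar{\mathcal{G}}(t,0;0,\cdot)$ and $D_s(a)$ with $D_0(a)$. The estimate should be an $L^k$ bound of the type $Cs^{\gamma}$, uniform in $a$, obtained as in Lemma~\ref{L:B-s-r}. Combined with H\"older's inequality and the uniform inverse moments of Proposition~\ref{P:Z-moments} and Lemma~\ref{L:barG-moments}, this reduces the claim to $s=0$. For this step, a coarser route may be easier: compare via the Markov decomposition at time $s$, $\bar{\mathcal{G}}(t,0;0,z)$ versus $\bar{\mathcal{G}}(t,0;s,z)$, using the small-time smallness of $\mathcal{G}(s,\cdot;0,z)$ against $p_s$, exactly as with $Z(s,\cdot)-1$.

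\textbf{Step 2 (decorrelation at $s=0$).} With $F\coloneqq\bar{\mathcal{G}}(t,0;0,0)$ and $G\coloneqq D_0(0)^{-1}$, both in $L^2(\Omega)$, we have $D_0(a)^{-1}=G\circ\theta_{t,\nu}$ with $|\nu|=|a|/t\to\infty$. Lemma~\ref{L:shear-mixing} then gives $\E[F\,(G\circ\theta_{t,\nu})]\to\E[F]\E[G]=\E[Z(t,0)^{-1}]$. Uniformity over the window is automatic, because the quantity depends only on $|a|\ge |x|/2-M$ and the convergence is as $|\nu|\to\infty$. Since the shift by $a$ and the sign of $\nu$ are symmetric, the same argument handles both factors $\widetilde{\mathcal{B}}(t,x;s,y)$ and $\widetilde{\mathcal{B}}(t,0;s,y)$.

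\textbf{Main obstacle.} The delicate point is verifying that the denominator with a displaced kernel is exactly a shear image of $D_0(0)$, with the numerator left untouched. Under the shear pivoting at time $t$, $\bar{\mathcal{G}}(t,0;0,z)\circ\theta=\bar{\mathcal{G}}(t,0;0,z+a)$ holds (this is the content of Proposition~2.3(iii)), while $F$ itself is moved as well. So the step really requires writing $\E[F\cdot G\circ\theta]$ with $F$ fixed, which means matching conventions for $\theta_{r,\nu}$ carefully.

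The uniformity in $s$ in Step~1 is the other technical point. If a quantitative rate is unavailable, a fallback is an equicontinuity argument: $s\mapsto(\bar{\mathcal{G}}(t,0;s,\cdot),D_s(\cdot))$ is continuous in $L^k$ uniformly in the shift $a$, by stationarity, and then mixing for each $s$ is combined with equicontinuity as $s\downarrow0$.
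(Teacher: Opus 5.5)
Your architecture matches the paper's: the shear-reduced ratio \eqref{E:Btilde-shift-shear}, a covariance plus $\E[Z(t-s,0)^{-1}]$, freezing $s$, then shear mixing. However, Step~1 is set up in a way that fails. You freeze $s$ at fixed displacement $a$, comparing $D_s(a)$ with $D_0(a)$, and claim an $L^k$ bound $Cs^{\gamma}$ uniform in $a$. Your fallback likewise asserts equicontinuity ``uniformly in the shift $a$, by stationarity''. No such uniformity holds, because $D_s(a)=D_s(0)\circ\theta_{t,a/(t-s)}$ whereas $D_0(a)=D_0(0)\circ\theta_{t,a/t}$, so the two shears have different velocities. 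Shears with a common pivot compose additively and are isometries of $L^2(\Omega)$, hence
\[
	\Norm{D_s(a)-D_0(a)}_2=\Norm{D_s(0)-D_0(0)\circ\theta_{t,\mu}}_2,
	\qquad \mu=-\frac{as}{t(t-s)} .
\]
If $s\downarrow0$ with $|a|s\to\infty$, Lemma~\ref{L:shear-mixing} together with $D_s(0)\to D_0(0)$ in $L^2$ gives $\Norm{D_s(a)-D_0(a)}_2^2\to2\big(\E[Z(t,0)^2]-1\big)>0$. Geometrically, at time $s$ the polymer of $D_0(a)$ sits near $-a(1-s/t)$, while that of $D_s(a)$ starts near $-a$. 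On the window, $|a|\approx|x|/2$ and $s$ ranges up to $|x|^{-2+\varepsilon}$, so $|a|s$ reaches order $|x|^{\varepsilon-1}$. Your Step~1 therefore works only for $\varepsilon\in(0,1)$, where $|\mu|\to0$ and strong continuity of $\mu\mapsto\theta_{t,\mu}$ rescues it. It fails for $\varepsilon\in[1,2)$, which the statement covers. Your ``coarser route'' has the same defect: the Markov decomposition at time $s$ matches $\bar{\mathcal G}(t,0;0,z)$ with $\bar{\mathcal G}(t,0;s,z(1-s/t))$, not with $\bar{\mathcal G}(t,0;s,z)$.

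The missing idea, which is what the paper does, is to freeze $s$ at fixed shear velocity rather than fixed displacement. Set $F_s=\bar{\mathcal G}(t,0;s,0)$, $G_s=D_s(0)^{-1}$ and $\nu=a/(t-s)$, so that $\widetilde{\mathcal B}(t,x;s,y)=\E[F_s\,(G_s\circ\theta_{t,\nu})]$. Since the same measure-preserving $\theta_{t,\nu}$ acts on both $G_s$ and $G_0$,
\[
	\bigl|\E[F_s\,(G_s\circ\theta_{t,\nu})]-\E[F_0\,(G_0\circ\theta_{t,\nu})]\bigr|
	\le\Norm{F_s-F_0}_2\Norm{G_s}_2+\Norm{F_0}_2\Norm{G_s-G_0}_2
\]
uniformly in $\nu$. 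Here $F_s\to F_0$ by continuity in the initial time \cite{alberts.janjigian.ea:22:greens}*{Proposition~3.8(ii)}, and $G_s\to G_0$ by \eqref{E:Z-vs-Ztilde-smalltime} plus inverse moments. Lemma~\ref{L:shear-mixing} is then applied to the fixed pair $(F_0,G_0)$ with $|\nu|\ge|a|/t\to\infty$. In other words, you should compare $D_s(a)$ with $D_0(at/(t-s))$, not with $D_0(a)$; this works along any $s\downarrow0$, hence for every $\varepsilon\in(0,2)$.

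Finally, the ``main obstacle'' you flag is not one. After substituting $u=z+a$, all $a$-dependence of $D_s(a)$ sits in $\bar{\mathcal G}(t,0;s,u-a)=\bar{\mathcal G}(t,0;s,u)\circ\theta_{t,a/(t-s)}$, and the numerator is never sheared. So $\E[F\cdot(G\circ\theta)]$ with $F$ fixed is literally the quantity in \eqref{E:Btilde-ratio}.
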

\begin{proof}
	Fix $M>0$ and $\varepsilon\in(0,2)$. We first show that
	$\widetilde{\mathcal{B}}(t,x;s,y)\to \kappa_\ast(t)$ uniformly on the
	boundary layer. Let $0<s<t$ and $x,y\in\R$, and set $a\coloneqq y-x$. Since
	$\widetilde{\mathcal{B}}(t,x;s,y)$ is deterministic,
	\eqref{E:Btilde-shift-shear} yields
	\begin{equation}\label{E:Btilde-ratio}
		\widetilde{\mathcal{B}}(t,x;s,y)
		=
		\E\!\left[
			\frac{\bar{\mathcal{G}}(t,0;s,0)}{\widetilde D_{t,s}(a)}
			\right],
	\end{equation}
	where
	\begin{equation}\label{E:Dtilde-def}
		\widetilde D_{t,s}(a)
		\coloneqq
		\int_{\R} p_{t-s}(z+a)\,\bar{\mathcal{G}}(t,0;s,z)\,\ud z
		=
		\int_{\R} p_{t-s}(u)\,\bar{\mathcal{G}}(t,0;s,u-a)\,\ud u.
	\end{equation}
	Note that $\widetilde D_{t,s}(0)=\widetilde Z(t,0;s)$, and hence $\widetilde
		D_{t,s}(0)$ has the same law as $Z(t-s,0)$ by stationarity of the noise. In
	particular,
	\[
		\E\!\left[\widetilde D_{t,s}(0)^{-1}\right]=\E\!\left[Z(t-s,0)^{-1}\right].
	\]

	By the shear invariance of the normalized Green's
	function~\cite{alberts.janjigian.ea:22:greens}*{Proposition~2.3(iii)} (applied
	with $r=t$), for $\nu\coloneqq a/(t-s)$ one has $\bar{\mathcal{G}}(t,0;s,u-a)
		= \bar{\mathcal{G}}(t,0;s,u)\circ \theta_{t,\nu}$, and therefore, by
	\eqref{E:Dtilde-def},
	\begin{equation}\label{E:Dtilde-shear}
		\widetilde D_{t,s}(a)=\widetilde D_{t,s}(0)\circ \theta_{t,\nu},
		\qquad
		\widetilde D_{t,s}(a)^{-1}=\widetilde D_{t,s}(0)^{-1}\circ \theta_{t,\nu}.
	\end{equation}

	Using \eqref{E:Btilde-ratio} and \eqref{E:Dtilde-shear}, we can rewrite
	\[
		\widetilde{\mathcal{B}}(t,x;s,y)-\E\!\left[Z(t-s,0)^{-1}\right]
		=
		\Cov\!\left(\bar{\mathcal{G}}(t,0;s,0),\ \widetilde D_{t,s}(0)^{-1}\circ \theta_{t,\nu}\right).
	\]
	By Lemma~\ref{L:barG-moments} and Proposition~\ref{P:Z-moments} applied to
	$\widetilde Z(t,0;s)$ (which has the same law as $Z(t-s,0)$), one has
	$\bar{\mathcal{G}}(t,0;s,0)\in L^2(\Omega)$ and $\widetilde D_{t,s}(0)^{-1}\in
		L^2(\Omega)$. Hence Lemma~\ref{L:shear-mixing} (see
	Section~\ref{SS:shear-mixing}) implies that for each fixed $s\in(0,t)$,
	\begin{equation}\label{E:Btilde-fixed-s}
		\lim_{|a|\to\infty}
		\left|\widetilde{\mathcal{B}}(t,x;s,y)-\E\!\left[Z(t-s,0)^{-1}\right]\right|=0, \qquad a=y-x.
	\end{equation}

	\medskip

	We now upgrade \eqref{E:Btilde-fixed-s} to the uniform boundary-layer limit
	\eqref{E:Btilde-boundary-layer-const}. Fix $M>0$ and consider $0<s\le
		|x|^{-2+\varepsilon}$ and $|y-x/2|\le M$. In this regime, $s\to0$ and
	$|a|=|y-x|\to\infty$ as $|x|\to\infty$ (uniformly over $|y-x/2|\le M$), and
	moreover $t-s\ge t/2$ for $|x|\gg 1$.

	Write $F_s \coloneqq \bar{\mathcal{G}}(t,0;s,0)$ and $G_s\coloneqq \widetilde
		D_{t,s}(0)^{-1}$.
	By~\cite{alberts.janjigian.ea:22:greens}*{Proposition~3.8(ii)} (in particular,
	the H\"older continuity in the initial time variable for the normalized
	Green's function), one has $F_s\to F_0$ in $L^2(\Omega)$ as $s\downarrow 0$.
	Similarly, by \eqref{E:Z-vs-Ztilde-smalltime} and
	Proposition~\ref{P:Z-moments}, $G_s\to G_0$ in $L^2(\Omega)$ as $s\downarrow
		0$.

	Let $s\in(0,t/2]$ and $\nu=a/(t-s)$. Using that $\theta_{t,\nu}$ is measure
	preserving, we have
	\[
		\left|\Cov(F_s,\,G_s\circ \theta_{t,\nu})-\Cov(F_0,\,G_0\circ \theta_{t,\nu})\right|
		\le \Norm{F_s-F_0}_2\,\Norm{G_s}_2 + \Norm{F_0}_2\,\Norm{G_s-G_0}_2,
	\]
	which tends to $0$ uniformly over $|a|\ge 1$ as $s\downarrow 0$ by the uniform
	$L^2$ bounds on $F_s$ and $G_s$.

	Since $|a|/(t-s)\to\infty$ as $|x|\to\infty$ in the boundary-layer geometry,
	Lemma~\ref{L:shear-mixing} yields $\Cov(F_0, G_0\circ \theta_{t,\nu})\to0$ as
	$|\nu|\to\infty$, and therefore the covariance term in
	$\widetilde{\mathcal{B}}(t,x;s,y)-\E[Z(t-s,0)^{-1}]$ vanishes uniformly on the
	boundary layer. Finally, by Lemma~\ref{L:Z-inv-cont}, $\E[Z(t-s,0)^{-1}]\to
		\E[Z(t,0)^{-1}]=\kappa_\ast(t)$ as $s\downarrow 0$.

	\medskip

	It remains to control $\widetilde{\mathcal{B}}(t,0;s,y)$ on the same boundary
	layer. Set $a_0\coloneqq y$ and observe that $|a_0|\to\infty$ as $|x|\to\infty$
	uniformly over $|y-x/2|\le M$, since $|y|\ge |x|/2-M$. Repeating the argument
	above with $x=0$ (so that $a=a_0$ and $\nu=a_0/(t-s)$) yields
	\[
		\sup_{\substack{0<s\le |x|^{-2+\varepsilon}\\ |y-x/2|\le M}}
		\left|\widetilde{\mathcal{B}}(t,0;s,y)-\kappa_\ast(t)\right|\longrightarrow 0,
		\qquad |x|\to\infty.
	\]
	Together with the first part (for $\widetilde{\mathcal{B}}(t,x;s,y)$), this
	proves \eqref{E:Btilde-boundary-layer-const}.
\end{proof}

\begin{corollary}[Boundary-layer estimate \eqref{E:key-a}]\label{C:key-a}
	Fix $t>0$, and set
	\[
		C(t)\coloneqq \kappa_\ast(t)^2=\Big(\E\!\left[Z(t,0)^{-1}\right]\Big)^2.
	\]
	Then for every $M>0$ there exists $\varepsilon\in(0,2)$ such
	that~\eqref{E:key-a} holds with this choice of $C(t)$.
\end{corollary}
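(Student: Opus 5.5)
The plan is to combine three earlier results by the triangle inequality. These are the product reduction of Lemma~\ref{L:key-a-to-Btilde}, the uniform convergence of the future-only profiles in Proposition~\ref{P:Btilde-const}, and the uniform boundedness \eqref{E:Btilde-unif}. Fix $M>0$ and any $\varepsilon\in(0,2)$; since Proposition~\ref{P:Btilde-const} holds for every $\varepsilon$, the existential choice in \eqref{E:key-a} is automatic. For $|x|$ large we have $|x|^{-2+\varepsilon}\le t\wedge 1$, so Lemma~\ref{L:key-a-to-Btilde} applies on the whole boundary layer. We then decompose
\[
\E\!\left[\mathcal{B}(t,x;s,y)\mathcal{B}(t,0;s,y)\right]-\kappa_\ast(t)^2
=\Big(\E[\mathcal{B}_x\mathcal{B}_0]-\widetilde{\mathcal{B}}_x\widetilde{\mathcal{B}}_0\Big)
+\Big(\widetilde{\mathcal{B}}_x\widetilde{\mathcal{B}}_0-\kappa_\ast(t)^2\Big),
\]
using the abbreviations from the proof of Lemma~\ref{L:key-a-to-Btilde}.

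The first bracket is bounded by $C_t s^{1/4}\le C_t|x|^{(-2+\varepsilon)/4}$, uniformly in $y$, by Lemma~\ref{L:key-a-to-Btilde}. This bound tends to $0$ as $|x|\to\infty$.

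For the second bracket we write
\[
\widetilde{\mathcal{B}}_x\widetilde{\mathcal{B}}_0-\kappa_\ast^2=(\widetilde{\mathcal{B}}_x-\kappa_\ast)\widetilde{\mathcal{B}}_0+\kappa_\ast(\widetilde{\mathcal{B}}_0-\kappa_\ast).
\]
By \eqref{E:Btilde-unif}, $|\widetilde{\mathcal{B}}_0|$ is bounded uniformly in $(s,y)$, and $\kappa_\ast(t)<\infty$ by Proposition~\ref{P:Z-moments}. Hence the second bracket is bounded by a constant times
\[
\sup_{0<s\le |x|^{-2+\varepsilon},\,|y-x/2|\le M}\Big(|\widetilde{\mathcal{B}}(t,x;s,y)-\kappa_\ast|+|\widetilde{\mathcal{B}}(t,0;s,y)-\kappa_\ast|\Big),
\]
which tends to $0$ by Proposition~\ref{P:Btilde-const} (that is, by \eqref{E:Btilde-boundary-layer-const}). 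Taking the supremum over the boundary layer and letting $|x|\to\infty$ gives \eqref{E:key-a} with $C(t)=\kappa_\ast(t)^2$.

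All the analytic content sits in the cited results, so the corollary itself has no real obstacle. The only points needing care are the uniformity in $(s,y)$ on the window, and checking that $s\le t\wedge1$ for large $|x|$ so that Lemma~\ref{L:key-a-to-Btilde} is applicable. If one is worried about the uniform-in-$s$ shear-mixing step inside Proposition~\ref{P:Btilde-const}, that is where the delicate work lives. Here, however, we simply invoke that proposition as stated.
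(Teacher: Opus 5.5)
Your proposal is correct and is essentially the paper's own argument. You split $\E[\mathcal{B}_x\mathcal{B}_0]-\kappa_\ast(t)^2$ into the $C_t s^{1/4}$ error from Lemma~\ref{L:key-a-to-Btilde} plus $\widetilde{\mathcal{B}}_x\widetilde{\mathcal{B}}_0-\kappa_\ast(t)^2$, which vanishes uniformly on the boundary layer by Proposition~\ref{P:Btilde-const}; the details you add (checking $|x|^{-2+\varepsilon}\le t\wedge 1$ for large $|x|$, and the product decomposition via \eqref{E:Btilde-unif}) are steps the paper leaves implicit.
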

\begin{proof}
	By Proposition~\ref{P:Btilde-const}, \eqref{E:Btilde-boundary-layer-const}
	holds with $\kappa_\ast(t)=\E[Z(t,0)^{-1}]$. Therefore,
	\[
		\widetilde{\mathcal{B}}(t,x;s,y)\,\widetilde{\mathcal{B}}(t,0;s,y)
		\longrightarrow
		\kappa_\ast(t)^2=C(t)
	\]
	uniformly on the boundary layer (with the same $M$ and some
	$\varepsilon\in(0,2)$). Combining this with Lemma~\ref{L:key-a-to-Btilde}
	(which bounds the difference between $\E[\mathcal{B}\mathcal{B}]$ and
	$\widetilde{\mathcal{B}}\,\widetilde{\mathcal{B}}$ by $C_t s^{1/4}$) yields
	\eqref{E:key-a}.
\end{proof}

\subsection{Proof of Theorem~\ref{thm:flat-main}}\label{SS:proof-main}

\begin{proof}[Proof of Theorem~\ref{thm:flat-main}]
	By Corollary~\ref{C:key-a}, the boundary-layer estimate \eqref{E:key-a} holds
	with $C(t)=\big(\E[Z(t,0)^{-1}]\big)^2$. Therefore Theorem~\ref{thm:c-redct}
	applies and yields
	\[
		\Cov[h(t,x),h(t,0)]\sim \Big(\E\!\left[Z(t,0)^{-1}\right]\Big)^2 I_{\mathrm{flat}}(t,x),
		\qquad |x|\to\infty.
	\]
	Since $I_{\mathrm{flat}}(t,x)=\int_0^t p_{2r}(x)\,\ud r$
	by~\eqref{E:Iflat-def}, this is exactly the statement of
	Theorem~\ref{thm:flat-main}.
\end{proof}

\section*{Acknowledgments}
The authors thank Yu Gu and Fei Pu for helpful comments on an earlier version
of this manuscript.
L.~C. was partially supported by NSF grants DMS-2246850/2443823 and a
Collaboration Grant for Mathematicians (\#959981) from the Simons Foundation. J.~J.\ was also partially supported by NSF grants DMS-2246850/2443823.


\end{document}